  \numberwithin{equation}{section}
  \newtheorem{theorem}{Theorem}[section]  
  \newtheorem{theorem?}{``Theorem''}[section]  
  \newtheorem{corollary}[theorem]{Corollary}
  \newtheorem{proposition}[theorem]{Proposition}
  \newtheorem{lemma}[theorem]{Lemma}
\theoremstyle{definition}
\theoremstyle{remark}
  \newtheorem{remark}[theorem]{Remark}  
  \newtheorem{note}[theorem]{Note}
\newcommand{\R}{{\mathbb R}}
\newcommand{\C}{{\mathbb C}}
\newcommand{\Q}{{\mathbb Q}}
\newcommand{\N}{{\mathbb N}}
\newcommand{\Z}{{\mathbb Z}}
\newcommand{\1}{{\bf 1}}
\renewcommand{\a}{\alpha}
\renewcommand{\b}{\beta}
\renewcommand{\d}{\partial}
\renewcommand{\1}{{\bf 1}}
\newcommand{\card}{{}^{\Pisymbol{psy}{"23}}\!}
\begin{document}
\title[Oscillatory integrals]
{
Asymptotic analysis of 
oscillatory integrals 
via the Newton polyhedra of 
the phase and the amplitude
} 
\author{Koji Cho, Joe Kamimoto and Toshihiro Nose}
\address{Faculty of Mathematics, Kyushu University, 
Motooka 744, Nishi-ku, Fukuoka, 819-0395, Japan} 
\email{
cho@math.kyushu-u.ac.jp}
\email{
joe@math.kyushu-u.ac.jp}
\email{ 
t-nose@math.kyushu-u.ac.jp}
\keywords{oscillatory integrals, 
oscillation index and its multiplicity, 
local zeta function,  
asymptotic expansion, 
Newton polyhedra of the phase and the amplitude, 
essential set.}
\subjclass[2000]{58K55 (14B05, 14M25).}
\thanks{
The second author was supported by 
Grant-in-Aid for Scientific Research (C) (No. 22540199), 
Japan Society for the Promotion of Science. 
}
\thanks{This paper will be published in J. Math. Soc. Japan.}
\maketitle



\begin{abstract}
The asymptotic behavior at infinity 
of oscillatory integrals is in detail investigated
by using the Newton polyhedra of the phase and 
the amplitude.
We are especially interested in 
the case that the amplitude has a zero 
at a critical point of the phase. 
The properties of poles of local zeta functions, 
which are closely related
to the behavior of oscillatory integrals, 
are also studied under the associated situation. 
\end{abstract}




\section{Introduction}
In this paper, we investigate the asymptotic behavior of 
oscillatory integrals, that is, integrals of the form
\begin{equation}
I(\tau)=
\int_{\R^n}e^{i\tau f(x)}\varphi(x)\chi(x)dx,
\label{eqn:1.1}
\end{equation}
for large values of the real parameter $\tau$, 
where $f$, $\varphi$, $\chi$ are real-valued smooth functions 
defined on $\R^n$ and     
$\chi$ is a cut-off function with small support 
which identically equals one 
in a neighborhood of the origin in $\R^n$.  
Here $f$ and $\varphi\chi$ are 
called the {\it phase} and the {\it amplitude}, 
respectively. 

By the principle of stationary phase, 
the main contribution in the
behavior of the integral (\ref{eqn:1.1}) 
as $\tau\to+\infty$ is given by the local properties 
of the phase around its critical points.
We assume that the phase has 
a critical point at 
the origin,  i.e., $\nabla f(0)=0$. 
The following deep result has been obtained 
by 
using Hironaka's resolution of singularities 
\cite{hir64}
(cf. \cite{mal74}). 
If $f$ is real analytic on a neighborhood of the origin
and
the support of $\chi$ is contained 
in a sufficiently small neighborhood of 
the origin, then
the integral $I(\tau)$ has an
asymptotic expansion of the form
\begin{equation}
I(\tau)\sim
e^{i\tau f(0)}
\sum_{\alpha} \sum_{k=1}^{n} 
C_{\alpha k}\tau^{\alpha} (\log\tau)^{k-1} \quad 
\mbox{as $\tau\to +\infty$}, 
\label{eqn:1.2}
\end{equation}
where 
$\alpha$ runs through a finite number of 
arithmetic progressions, not depending on 
$\varphi$ and $\chi$, 
which consist of negative rational numbers. 
Our interest focuses the largest $\alpha$ 
occurring in (\ref{eqn:1.2}). 
Let $S(f,\varphi)$ be the set 
of pairs $(\alpha,k)$ such that 
for each neighborhood of the origin in $\R^n$, 
there exists a cut-off function $\chi$ 
with support in this neighborhood 
for which $C_{\alpha k}\neq 0$ in the asymptotic 
expansion (\ref{eqn:1.2}).  
We denote by $(\beta(f,\varphi), \eta(f,\varphi))$
the maximum of the set $S(f,\varphi)$ 
under the lexicographic ordering, 
i.e. $\beta(f,\varphi)$ is the maximum of
values $\alpha$ for which we can find 
$k$ so that $(\alpha,k)$ belongs to $S(f,\varphi)$;
$\eta(f,\varphi)$ is the maximum of integers
$k$ satisfying that $(\beta(f,\varphi),k)$ belongs to 
$S(f,\varphi)$. 
We call $\beta(f,\varphi)$   
{\it oscillation index}  
of $(f,\varphi)$ and 
$\eta(f,\varphi)$ its {\it multiplicity}. 
(This multiplicity, less one, is equal to 
the corresponding multiplicity 
in \cite{agv88},p.183.)

From various points of view, 
the following is an interesting problem: 
What kind of information of the phase and 
the amplitude determines (or estimates) 
the oscillation index $\beta(f,\varphi)$ and 
its multiplicity $\eta(f,\varphi)$?  
There have been many interesting studies 
concerning this problems
(\cite{var76},\cite{ds89},\cite{sch91},\cite{ds92},\cite{dns05},\cite{gre09},\cite{gre10},\cite{grea07}, etc.).  
In particular, 
the significant work of Varchenko \cite{var76} 
shows the following by 
using the theory of toric varieties: 
By the geometry of the Newton polyhedron of $f$, 
the oscillation index can be estimated and, 
moreover, this index and its multiplicity can be 
exactly determined when $\varphi(0)\neq 0$, 
under a certain nondegenerate condition of the phase 
(see Theorem~2.1 in Section~2).  
Since his study, 
the investigation of the behavior of 
oscillatory integrals has been more closely linked with 
the theory of singularities. 
Refer to the excellent expositions 
\cite{agv88},\cite{kan81} for studies in this direction.  
Besides \cite{var76}, recent works of Greenblatt 
\cite{gre04},\cite{gre08},\cite{gre09},\cite{gre10},\cite{grea07} 
are also interesting. 
He explores a certain resolution of singularities, 
which is obtained from an elementary method, 
and investigates the asymptotic behavior of $I(\tau)$. 
His analysis is also available 
for a wide class of phases 
without the above nondegenerate condition.

In this paper, 
we generalize and improve the above results of 
Varchenko \cite{var76}. 
To be more precise, 
we are especially interested in the behavior of 
the integral (\ref{eqn:1.1}) as $\tau\to +\infty$  
when $\varphi$ has a zero at a critical 
point of the phase.  
Indeed, under some assumptions, 
we obtain more accurate results 
by using the Newton polyhedra of not only the phase 
but also the amplitude. 
Closely related issues have been investigated by 
Arnold, Gusein-Zade and Varchenko \cite{agv88} and 
Pramanik and Yang \cite{py04}, and 
they obtained similar results to ours. 
From the point of view of our investigations, 
their results will be reviewed 
in Remark~2.8 in Section~2 and Section~7.4. 
In our results, 
delicate geometrical conditions of the Newton polyhedra
of the phase and the amplitude affect the behavior of 
oscillatory integrals.  
There exist some faces of the Newton polyhedron
of the amplitude, 
which play a crucial role in determining 
the oscillation index and its multiplicity. 
Furthermore, 
in order to determine the oscillation index in general, 
we need not only geometrical properties 
of their Newton polyhedra but also
information about the coefficients 
of the terms, corresponding to the above faces,  
in the Taylor series of the amplitude. 
(See Theorem~2.7 in Section~2.2 and Example~2 in 
Section~7.3.) 

It is known  
(see, for instance, 
\cite{igu78},\cite{agv88},\cite{kan81}, 
and Section~6.1 in this paper) 
that 
the asymptotic analysis of oscillatory integral (\ref{eqn:1.1}) 
can be reduced to an investigation of the poles of the functions 
$Z_+(s)$ and $Z_-(s)$ (see (\ref{eqn:5.1}) below), 
which are similar to the local zeta function 
\begin{equation}
Z(s)=
\int_{\R^n} |f(x)|^s\varphi(x)\chi(x)dx,
\label{eqn:1.3}
\end{equation}
where $f$, $\varphi$, $\chi$ are the same 
as in (\ref{eqn:1.1}) with $f(0)=0$. 
The substantial analysis in this paper is to  
investigate the properties of poles of 
the local zeta function $Z(s)$ and 
the functions $Z_{\pm}(s)$ by using 
the Newton polyhedra of the functions 
$f$ and $\varphi$.
See Section~5 for more details. 

Many problems in analysis, 
including  
partial differential equations, 
mathematical physics, harmonic analysis and 
probability theory, 
lead to the need to study the behavior of 
oscillatory integrals of the form (\ref{eqn:1.1})
as $\tau\to+\infty$. 
We explain the original motivation for our investigation. 
In the function theory of several complex variables, 
it is an important problem to understand
boundary behavior of 
the Bergman kernel 
for pseudoconvex domains. 
In \cite{joe04}, the special case of domains of finite type 
is considered and 
the behavior as $\tau\to+\infty$ of the Laplace integral
$$
\tilde{I}(\tau)=
\int_{\R^n}e^{-\tau f(x)}\varphi(x)dx
$$
plays an important role 
in boundary behavior of the above kernel. 
Here $f$,$\varphi$ are $C^{\infty}$ functions 
satisfying certain conditions.   
The computation of asymptotic expansion 
of the above kernel in \cite{joe04} requires 
precise analysis of $\tilde{I}(\tau)$ 
when $\varphi$ 
has a zero at the critical point of $f$. 
Our analysis in this paper can be applied 
to the case of the above Laplace integrals.  
See also \cite{cko04},\cite{ckn11}.

This paper is organized as follows. 
In Section~2, after explaining  
some important notions and terminology,
we state main results relating to oscillatory integrals. 
In Section~3, 
we consider an important assumption 
in Theorem~2.7 in Section~2, 
which is related to elementary convex geometry
(cf. \cite{zie95}). 
In Section~4, 
we overview the theory of toric varieties and 
explain a certain resolution of singularities.  
In Section~5,
we investigate the properties of 
poles of the local zeta function 
$Z(s)$ and 
the functions $Z_{\pm}(s)$ by using 
the resolution of singularities constructed 
in Section~4.
In Section~6, 
we give proofs of theorems on the behavior
of oscillatory integrals stated 
in Section~2. 
In Section~7, 
we give some examples, which clarify the subtlety 
of our results.
Lastly, we check a related result in \cite{agv88}
with these examples.

{\it Notation and Symbols.}\quad
\begin{itemize}
\item 
We denote by $\Z_+, \Q_+, \R_+$ the subsets consisting of 
all nonnegative numbers in $\Z,\Q,\R$, respectively.
\item
We use the multi-index as follows.
For $x=(x_1,\ldots,x_n),y=(y_1,\ldots,y_n) \in\R^n$, 
$\a=(\a_1,\ldots,\a_n)\in\R_+^n$, 
define
\begin{eqnarray*}
&& 
|x|=\sqrt{|x_1|^{2}+\cdots +|x_n|^{2}}, \quad 
 \langle x,y \rangle =x_1 y_1+\dots+x_n y_n, 
\\
&& 
x^{\a}=x_1^{\a_1}\cdots x_n^{\a_n}, \quad
 \langle \a \rangle =\a_1+\cdots+\a_n.  
\end{eqnarray*}
\item
For $A,B\subset \R^n$ and $c\in\R$, 
we set 
$$
A+B=\{a+b\in\R^n; a\in A \mbox{ and } b\in B\},\quad
c\cdot A=\{ca\in\R^n; a\in A\}.
$$
\item
We express by $\1$ the vector $(1,\ldots,1)$ or the set 
$\{(1,\ldots,1)\}$. 
\item
For a finite set $A$, 
$\card A$ means the cardinality of $A$. 
\item
For a $C^{\infty}$ function $f$, 
we denote by Supp($f$) the support of $f$, i.e., 
Supp($f$)$=\overline{\{x\in \R^n; f(x)\neq 0\}}$. 
\end{itemize}


\section{Definitions and main results}

\subsection{Newton polyhedra}
Let us explain some necessary notions 
to state our main theorems. 
The definitions of more fundamental
terminologies (polyhedra, faces, dimensions, etc.)
will be given in Section~3.1.

Let $f$ 
be a real-valued $C^{\infty}$ function defined 
on a neighborhood of the origin in $\R^n$, 
which has the Taylor series   
$
\sum_{\alpha\in{\Z}_+^n} c_{\alpha}x^{\alpha} 
$ 
at the origin. 
Then, the {\it Taylor support} of $f$ is the set   
$
S_f=\{\a\in\Z_+^n;c_{\a}\neq 0\} 
$ 
and the {\it Newton polyhedron} of $f$
is the integral polyhedron: 
$$
\Gamma_+(f)=
\mbox{the convex hull of the set 
$\bigcup \{\a+\R_+^n;\a\in S_f\}$ in $\R_+^n$}
$$
(i.e., the intersection 
of all convex sets 
which contain $\bigcup \{\a+\R_+^n;\a\in S_f\}$).
The union of the compact faces of 
the Newton polyhedron $\Gamma_+(f)$ is called 
the {\it Newton diagram} $\Gamma(f)$ of $f$, 
while the topological boundary of $\Gamma_+(f)$ 
is denoted by 
$\d\Gamma_+(f)$. 
The {\it principal part} of $f$ is defined by 
$f_0(x)=\sum_{\alpha\in\Gamma(f)\cap\Z_+^n}
c_{\alpha}x^{\alpha}.$
For a compact subset 
$\gamma\subset\partial\Gamma_+(f)$, 
let 
$f_{\gamma}(x)=\sum_{\a\in\gamma\cap\Z_+^n}
c_{\alpha}x^{\alpha}$.
$f$ is said to be {\it nondegenerate} 
over $\R$ with respect to the Newton polyhedron 
$\Gamma_+(f)$ if
for every compact face $\gamma\subset\Gamma(f)$, 
the polynomial $f_{\gamma}$ satisfies 
$$
\nabla f_{\gamma}=
\left(
\frac{\d f_{\gamma}}{\d x_1},\ldots,
\frac{\d f_{\gamma}}{\d x_n}\right)
\neq (0,\ldots,0)\quad
\mbox{on the set $\{x\in\R^n; x_1\cdots x_n\neq 0\}$.}
$$
$f$ is said to be {\it convenient} if 
the Newton diagram $\Gamma(f)$ 
intersects all the coordinate axes. 

Let $f,\varphi$ be real-valued $C^{\infty}$ functions 
defined on a neighborhood of the origin in $\R^n$ 
and assume that $\Gamma(f)$ and 
$\Gamma(\varphi)$ are nonempty. 
We define the {\it Newton distance} of $(f,\varphi)$ by 
\begin{equation}
d(f,\varphi)=
\min\{d>0;
d\cdot(\Gamma_+(\varphi)+\1)\subset\Gamma_+(f)\}.
\label{eqn:2.1}
\end{equation}
It is easy to see 
$d(f,\varphi)=\max\{d>0;
\d\Gamma_+(f)\cap d\cdot(\Gamma_+(\varphi)+\1)\neq\emptyset\}.$
The number $d(f,\varphi)$ corresponds to what is called 
the {\it coefficient of inscription} of $\Gamma_+(\varphi)$
in $\Gamma_+(f)$ in \cite{agv88}, p 254. 
(Their definition in \cite{agv88} must be slightly modified.)
Let $\Gamma(\varphi,f)$ be the subset in $\R^n$
defined by 
$$
\Gamma(\varphi,f)+\1=
\left(
\frac{1}{d(f,\varphi)}\cdot\d\Gamma_+(f)
\right)
\cap
(\Gamma_+(\varphi)+\1).
$$
In the above definition, 
$\partial\Gamma_+(\varphi)$ can be used 
instead of $\Gamma_+(\varphi)$  
(see Remark~3.2). 
Lemma~3.1, below, implies that 
$\Gamma(\varphi,f)$ is some union of faces of 
$\Gamma_+(\varphi)$.  

Let $\Gamma^{(k)}$ be the union of $k$-dimensional faces of 
$\Gamma_+(f)$. 
Then $\Gamma_+(f)$ is stratified as
$
\Gamma^{(0)}\subset\Gamma^{(1)}\subset\cdots\subset
\Gamma^{(n-1)}(=\d\Gamma_+(f))
\subset
\Gamma^{(n)}(=\Gamma_+(f)).
$
Let $\tilde{\Gamma}^{(k)}=
\Gamma^{(k)}\setminus\Gamma^{(k-1)}$ for 
$k=1,\ldots,n$ and 
$\tilde{\Gamma}^{(0)}=\Gamma^{(0)}$. 
A map $\rho_f:\Gamma_+(f)\to\{0,1,\ldots,n\}$ 
is defined as 
$\rho_f(\a)=k$ if $\a\in\tilde{\Gamma}^{(n-k)}$.   
In other words, 
$\rho_f(\alpha)$ is the codimension of 
the face of $\Gamma_+(f)$, 
whose relative interior contains the point $\alpha$.
We define the {\it Newton multiplicity} of $(f,\varphi)$ by  
$$
m(f,\varphi)=\max\{
\rho_f(d(f,\varphi)(\a+\1)); \mbox{
$\a\in \Gamma(\varphi,f)$}
\}.
$$
Let $\Gamma_0$ be the subset of $\Gamma(\varphi,f)$ defined by
$$
\Gamma_0=\{\a\in\Gamma(\varphi,f);
\rho_f(d(f,\varphi)(\a+\1))=m(f,\varphi)\},
$$
which is called the {\it essential set} on 
$\Gamma(\varphi,f)$.
Proposition~3.3, below, shows that
$\Gamma_0$ is a disjoint union of faces of 
$\Gamma_+(\varphi)$.

Consider the case 
$\varphi(0)\neq 0$. 
Then $\Gamma_+(\varphi)=\R_+^n$. 
In this case, $d(f,\varphi)$ and $m(f,\varphi)$ 
are denoted by $d_f$ and $m_f$, respectively. 
(Note that $d(f,\varphi)\leq d_f$ for general $\varphi$.)  
It is easy to see that the point $q=(d_f,\ldots,d_f)$ is the 
intersection of the line $\alpha_1=\cdots=\alpha_n$ in $\R^n$
and $\d\Gamma_+(f)$, and that $m_f=\rho_f(q)$.
$\Gamma(\varphi,f)=\Gamma_0=\{0\}.$
More generally, in the case that 
$\Gamma_+(\varphi)=\{p\}+\R_+^n$ with $p\in\Z_+^n$, 
the geometrical meanings of 
the quantities $d(f,\varphi)$ and $m(f,\varphi)$ 
will be considered in Proposition 5.4 below.

\subsection{Main results}

Let us explain our results relating to the behavior 
of the oscillatory integral $I(\tau)$ 
in (\ref{eqn:1.1}) as $\tau\to+\infty$. 

Throughout this subsection, 
$f$, $\varphi$, $\chi$ satisfy the following conditions:
Let $U$ be an open neighborhood of the origin in $\R^n$.
\begin{enumerate}
\item[(A)] 
$f:U \to \R$ is a real analytic function satisfying 
that $f(0)=0$, $|\nabla f(0)|=0$ and $\Gamma(f)\neq\emptyset$;
\item[(B)] 
$\varphi:U \to \R$ is a $C^{\infty}$ function
satisfying 
$\Gamma(\varphi)\neq\emptyset$;
\item[(C)]
$\chi:\R^n\to\R_+$ is a $C^{\infty}$ function 
which identically
equals one in some neighborhood of the origin 
and has a  small support which 
is contained in $U$. 
\end{enumerate}

As mentioned in the Introduction, 
it is known that 
the oscillatory integral (\ref{eqn:1.1}) 
has an asymptotic expansion of the form (\ref{eqn:1.2}).
Before stating our results, 
we recall a part of famous results due to 
Varchenko in \cite{var76}. 
In our language,  they are stated as follows. 
\begin{theorem}[Varchenko \cite{var76}]
Suppose that $f$ is nondegenerate
over $\R$ with respect to its Newton polyhedron. 
Then
\begin{enumerate}
\item[(i)] $\beta(f,\varphi)\leq -1/d_f$ for any $\varphi$;
\item[(ii)] If $\varphi(0)\neq 0$ and $d_f>1$, then 
$\beta(f,\varphi)=-1/d_f$ and $\eta(f,\varphi)=m_f$;
\item[(iii)]
The progression $\{\alpha\}$ in $(\ref{eqn:1.2})$
belongs to finitely many 
arithmetic progressions, 
which are obtained by using the theory
of toric varieties based on the geometry of 
the Newton polyhedron $\Gamma_+(f)$. 
$($See Remark~$2.6$, below.$)$
\end{enumerate}
\end{theorem}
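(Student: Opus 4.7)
My plan is to pass from the oscillatory integral to the Mellin side, resolve the singularities of $f$ by a toric modification adapted to $\Gamma_+(f)$, read off the poles of the resulting zeta functions, and then transfer this information back to $I(\tau)$.

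First I would set up the link between $I(\tau)$ and the one-sided local zeta functions $Z_\pm(s)=\int_{\R^n}f_\pm(x)^s\varphi(x)\chi(x)\,dx$ (where $f_\pm=\max(\pm f,0)$) by the standard Mellin/inverse Mellin argument sketched in Section~6 of the paper: if $Z_\pm$ admits meromorphic continuation to $\C$ with poles in a discrete set, then $I(\tau)$ admits an expansion of the form (\ref{eqn:1.2}), the exponents $\alpha$ being exactly the poles of $Z_+(s)$ and $Z_-(s)$ and the powers of $\log\tau$ being governed by the orders of those poles. Consequently, statements (i)--(iii) reduce to (a) locating the rightmost possible pole $s=-1/d_f$ of $Z_\pm$ with a uniform upper bound on its order, (b) showing nonvanishing of the corresponding coefficient when $\varphi(0)\neq 0$ and $d_f>1$, and (c) showing that all poles lie in finitely many arithmetic progressions determined by the Newton polyhedron.

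Next I would construct the toric resolution $\pi:Y\to\R^n$ associated to a smooth subdivision $\Sigma$ of the dual fan of $\Gamma_+(f)$ (this is exactly the material of Section~4 in the paper). On each coordinate chart $U_\sigma\cong\R^n$ corresponding to an $n$-dimensional cone $\sigma$ spanned by primitive integer vectors $a^{(1)},\dots,a^{(n)}\in\Z_+^n$, we have $f\circ\pi(y)=y^{A}u(y)$ for a monomial exponent $A=(\ell(a^{(1)}),\dots,\ell(a^{(n)}))$, where $\ell(a)=\min_{\alpha\in\Gamma_+(f)}\langle a,\alpha\rangle$, and the nondegeneracy of $f$ ensures that $u$ is a smooth nonvanishing function on the support of the chart (this is precisely where hypothesis on $f$ is used). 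The Jacobian of $\pi$ is another monomial $y^{\langle a^{(1)}\rangle-1}\cdots y^{\langle a^{(n)}\rangle-1}$, and $\varphi\circ\pi$ is a smooth function. Pulling $Z_\pm$ back to each chart, a one-variable Mellin computation in each $y_j$ produces a meromorphic function whose only possible poles are at $s=-\langle a^{(j)}\rangle/\ell(a^{(j)})$ (for those $j$ with $\ell(a^{(j)})>0$), with order at most the multiplicity with which the same pole appears across the $n$ Mellin factors.

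With this picture in hand, statement (iii) is automatic: after summing over the finitely many charts, the poles of $Z_\pm$ belong to the finite collection of arithmetic progressions $\{-(\langle a\rangle+k)/\ell(a):k\in\Z_+\}$ where $a$ ranges over the primitive generators of $\Sigma$, and these progressions are intrinsic to $\Gamma_+(f)$. For (i), one observes that $-\langle a\rangle/\ell(a)\leq -1/d_f$ for every primitive $a\in\Z_+^n$ with $\ell(a)>0$: indeed, the supporting hyperplane $\{\langle a,\alpha\rangle=\ell(a)\}$ meets the diagonal at $\alpha=(\ell(a)/\langle a\rangle)\1$, so that $\ell(a)/\langle a\rangle\leq d_f$ by definition of $d_f=d(f,1)$. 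Hence no pole of $Z_\pm$ can exceed $-1/d_f$, uniformly in $\varphi$; transferring to $I(\tau)$ gives $\beta(f,\varphi)\leq -1/d_f$. The multiplicity estimate on the right-hand pole is governed by how many of the $a^{(j)}$ saturate the diagonal, which is precisely $m_f=\rho_f(d_f\1)$.

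The main obstacle, and the heart of (ii), is to show that the pole at $s=-1/d_f$ is actually attained with order $m_f$ when $\varphi(0)\neq0$ and $d_f>1$, i.e.\ no cancellation of leading residues takes place among the charts. The hypothesis $d_f>1$ rules out a resonance of the leading pole with the trivial pole $s=-1$ coming from the amplitude side, while $\varphi(0)\neq0$ guarantees that on the chart(s) containing the exceptional divisor maximally tangent to the diagonal, $\varphi\circ\pi$ is nonvanishing at the origin, so the leading coefficient in the Mellin expansion is a positive multiple of $\varphi(0)$. The delicate point is to check that when several $a^{(j)}$ contribute poles at the same location $s=-1/d_f$, the combined residue of highest order is nonzero; this uses the nondegeneracy of $f_\gamma$ on the face $\gamma$ met by the diagonal and the positivity of an integral over the orbit at infinity of that face. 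Granting this nonvanishing, which I expect to be the technical core of the argument, (ii) follows immediately, and pushing the pole back through the Mellin inversion yields $\beta(f,\varphi)=-1/d_f$ and $\eta(f,\varphi)=m_f$.
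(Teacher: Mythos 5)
Your plan—Mellin inversion, toric resolution subordinate to $\Gamma_+(f)$, reading off the poles of $Z_\pm(s)$, and transferring back to $I(\tau)$—is the same architecture as Sections~4--6 of this paper (and Varchenko's original). But one assertion in the middle is wrong, and it sinks your derivation of~(i). You write that on each chart ``the nondegeneracy of $f$ ensures that $u$ is a smooth nonvanishing function on the support of the chart.'' Proposition~4.2 only gives $f_\sigma(0)\neq 0$, together with the statement that the zero set of $f_\sigma$ is nonsingular where it meets each coordinate stratum $T_I$; $f_\sigma$ can and generally does vanish on the support of the cutoff. This is exactly why the proof of Theorem~5.1 has to split each chart integral into the pieces $I^{(k)}_{\sigma,\pm}$ (where $f_\sigma$ has a definite sign) and $J^{(l)}_{\sigma,\pm}$ (localized near the zero set of $f_\sigma$), and it is the $J$-terms that inject the extra progression $(-\N)$ into the pole set of $Z_\pm$; see Remark~5.3.

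That extra progression matters for~(i). Since $\nabla f(0)=0$ forces $\Gamma_+(f)\subset\{\alpha:\langle\alpha\rangle\geq 2\}$, one only has $d_f\geq 2/n$; for $n\geq 3$ it can happen that $d_f<1$ and hence $-1>-1/d_f$ (take $f=x_1x_2+x_2x_3+x_1x_3$, for which $d_f=2/3$). In that regime $Z_\pm$ genuinely has poles at $s=-1,\dots,-k_*$ to the \emph{right} of $-1/d_f$, so the claim ``no pole of $Z_\pm$ can exceed $-1/d_f$'' is simply false. The missing ingredient is a cancellation mechanism: Propositions~5.8 and~5.14 show these residues are simple and satisfy $a_k^+=(-1)^{k-1}a_k^-$, and feeding this into the transfer formula~(\ref{eqn:6.5}) with $\lambda=k$ makes $e^{i\pi k/2}a_k^++e^{-i\pi k/2}a_k^-$ vanish identically. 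The offending poles are killed in $I(\tau)$, not in $Z_\pm(s)$; without that residue identity the inequality $\beta(f,\varphi)\leq-1/d_f$ is unproved. The same oversight means your list of progressions in~(iii) should also contain $(-\N)$, and for general $\varphi$ the numerators should carry the term $\tilde l(a)$ (cf.\ Remark~2.6). Part~(ii) is fine as outlined: $d_f>1$ keeps the leading pole away from $(-\N)$, and the positivity/nonresonance argument you sketch is made precise by Theorems~5.7 and~5.13 together with formula~(\ref{eqn:6.5}).
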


Now, let us explain our results. 
First, we investigate more precise situation in 
the estimate in the part (i) of Theorem~2.1. 
Indeed, when $\varphi$ has a zero at the origin, 
the oscillation index $\beta(f,\varphi)$ can be 
more accurately estimated 
by using the Newton distance $d(f,\varphi)$, 
which is called ``the coefficient of inscription of 
$\Gamma_+(\varphi)$ in $\Gamma_+(f)$'' in \cite{agv88}. 


\begin{theorem}
Suppose that 
{\rm (i)} $f$ is nondegenerate over $\R$ 
with respect to its Newton polyhedron 
and 
{\rm (ii)} 
at least one of the following conditions is satisfied:
\begin{enumerate}
\item[(a)] $f$ is convenient;
\item[(b)] $\varphi$ is convenient;
\item[(c)] $\varphi$ is real analytic on $U$;
\item[(d)] $\varphi$ is expressed as 
$\varphi(x)=x^p\tilde{\varphi}(x)$ 
on $U$, where 
$p\in\Z_+^n$ and $\tilde{\varphi}$ is a $C^{\infty}$ function 
defined on $U$ with $\tilde{\varphi}(0)\neq 0$.
\end{enumerate}  
Then, we have
$\beta(f,\varphi)\leq -1/d(f,\varphi)$.
\end{theorem}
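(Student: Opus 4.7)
The plan is to deduce Theorem~2.2 from the reduction reviewed in Section~6.1: the oscillation index $\beta(f,\varphi)$ is bounded above by the largest real part of a pole of the auxiliary functions $Z_\pm(s)$ associated with $(f,\varphi,\chi)$. It thus suffices to show that, under the hypotheses, every pole of $Z_\pm(s)$ has real part $\le -1/d(f,\varphi)$.

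First I would invoke the toric resolution $\pi$ built in Section~4 from a simplicial fan $\Sigma$ in $\R_+^n$ compatible with $\Gamma_+(f)$ (refined where necessary to be compatible with $\Gamma_+(\varphi)$ as well). For $a\in\R_+^n$ let $l_f(a)=\min\{\langle a,\alpha\rangle:\alpha\in\Gamma_+(f)\}$ and $l_\varphi(a)=\min\{\langle a,\alpha\rangle:\alpha\in\Gamma_+(\varphi)\}$. On each chart $U_\sigma$ corresponding to a cone generated by primitive vectors $a^{(1)},\ldots,a^{(n)}\in\Z_+^n$, the pullback of $f$ factors as $f\circ\pi=y^A g(y)$ with $A_j=l_f(a^{(j)})$, the Jacobian is $y^{C-\1}$ with $C_j=\langle a^{(j)},\1\rangle$, and by the nondegeneracy hypothesis $g$ is smooth and non-vanishing on the essential part of the chart. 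The amplitude has a comparable factorization $\varphi\circ\pi=y^B h(y)+r(y)$ with $B_j\ge l_\varphi(a^{(j)})$ and a smooth remainder $r$ to be controlled separately.

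Assembling the integrand on $U_\sigma$ (modulo the remainder) and carrying out $n$-fold Mellin analysis yields pole candidates in $s$ at
\[
s=-\frac{B_j+C_j}{A_j}\;\le\;-\frac{l_\varphi(a^{(j)})+\langle a^{(j)},\1\rangle}{l_f(a^{(j)})},\qquad j=1,\ldots,n.
\]
Taking the maximum over $j$ and over cones $\sigma\in\Sigma$, the right-most pole is bounded by $-\inf_{a\in\R_+^n\setminus\{0\}}\bigl(l_\varphi(a)+\langle a,\1\rangle\bigr)/l_f(a)$. A short convex-geometric observation—support-function duality applied to the condition $d\cdot(\Gamma_+(\varphi)+\1)\subset\Gamma_+(f)$, which is equivalent to $d\bigl(l_\varphi(a)+\langle a,\1\rangle\bigr)\ge l_f(a)$ for every $a\in\R_+^n$—identifies this infimum with $1/d(f,\varphi)$. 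Combining both points gives the desired bound $\beta(f,\varphi)\le -1/d(f,\varphi)$.

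The main obstacle, and the reason the theorem splits into the four sub-hypotheses (a)--(d), is the control of the flat remainder $r$: when $\varphi$ or $f$ is merely smooth, its Newton polyhedron does not capture $C^\infty$-flat directions, and a priori the pullback need not be monomializable by $\pi$. Under (c), analyticity eliminates the remainder entirely on a small neighborhood of the origin. Under (d), the explicit factorization $\varphi=x^p\tilde\varphi$ passes cleanly through $\pi$ and reduces the problem to the Varchenko-type situation with $\Gamma_+(\varphi)=\{p\}+\R_+^n$ and $\tilde\varphi(0)\neq 0$. Under (a), the convenience of $f$ forces $\Sigma$ to contain every coordinate ray, so after Taylor-expanding $\varphi$ to sufficiently high order the flat remainder contributes only to $\{\mathrm{Re}(s)>-1/d_f\}\supseteq\{\mathrm{Re}(s)>-1/d(f,\varphi)\}$ holomorphically. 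Under (b), convenience of $\varphi$ plays the symmetric role, ensuring $\Gamma_+(\varphi)$ controls every direction of the fan. Carrying out these four remainder estimates—checking in each case that the integral of $r$ over $U_\sigma$ is holomorphic in the half-plane $\{\mathrm{Re}(s)>-1/d(f,\varphi)\}$—is the technical core of the argument.
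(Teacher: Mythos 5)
Your overall architecture is right — reduce to the poles of $Z_\pm(s)$ via the toric resolution, identify the extremal candidate pole with $-1/d(f,\varphi)$ by support-function duality, and use the four sub-hypotheses to control the flat remainder of $\varphi$ — and that matches the paper's strategy (Theorems 5.1, 5.10 and Lemma 5.11). But there is a genuine gap: the passage from ``right-most pole of $Z_\pm(s)$'' to ``oscillation index'' is not the simple bound you assert, and your claim that the resolved factor $g=f_\sigma$ is \emph{non-vanishing} on the relevant charts is false.

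Nondegeneracy only forces $f_\sigma\neq 0$ on the strata $T_I$ lying over the origin; $f_\sigma$ does vanish along the proper transform of $\{f=0\}$, which meets the exceptional divisor. Near those zeros one must change coordinates so that $f_\sigma$ itself becomes a coordinate factor $y_k$, and the resulting integrals $J^{(l)}_{\sigma,\pm}(s)$ contribute extra simple poles on $(-\N)$ (Remark~5.3 and formula (5.8) of the paper). When $d(f,\varphi)<1$, some of those integer poles, say $s=-1,\dots,-k_*$ with $k_*<1/d(f,\varphi)$, lie strictly to the \emph{right} of $-1/d(f,\varphi)$, so your claimed bound ``$\beta(f,\varphi)\leq$ largest pole of $Z_\pm$'' only yields $\beta(f,\varphi)\le -1$, which is strictly weaker than the theorem when $d(f,\varphi)<1$. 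The paper closes this gap with Propositions~5.8 and~5.14, which show those integer poles are simple and satisfy $a_k^+=(-1)^{k-1}a_k^-$, and then with the transfer formula (6.5): at $\lambda=k\in\N$, $\rho=1$, the combination $e^{i\pi k/2}a_k^+ + e^{-i\pi k/2}a_k^-$ vanishes, so the corresponding term in the expansion of $I(\tau)$ is zero. Without this cancellation your argument does not reach the stated conclusion for $d(f,\varphi)<1$. (A minor secondary slip: you have the inclusion of half-planes backwards — $d_f\ge d(f,\varphi)$ gives $\{\mathrm{Re}\,s>-1/d_f\}\subseteq\{\mathrm{Re}\,s>-1/d(f,\varphi)\}$ — so holomorphy of the remainder on the former would not suffice; what is actually shown is holomorphy on an arbitrarily wide half-plane by taking the Taylor order $N$ large, cf.\ Lemma~5.11.)
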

\begin{remark}
A more precise estimate for $I(\tau)$ 
is obtained as follows:
If the support of $\chi$ is
contained in a sufficiently small neighborhood 
of the origin, 
then 
there exists a positive constant $C$ independent of 
$\tau$ such that 
\begin{equation*}
|I(\tau)|\leq C\tau^{-1/d(f,\varphi)}
(\log \tau)^{A-1}
\quad \mbox{
for $\tau\geq 1$},
\label{eqn:}
\end{equation*}
where 
\begin{equation*}
A:=\begin{cases}
m(f,\varphi)& \quad \mbox{if $1/d(f,\varphi)$ 
is not an integer}, \\
\min\{m(f,\varphi)+1, n\}&
\quad \mbox{otherwise}.
\end{cases}
\end{equation*} 
The details will be explained 
in the proof of the above theorem in Section~6.
\end{remark}
\begin{remark}
Let us consider the above theorem 
under the assumptions (i), (ii)-(d) without 
the condition: $\tilde{\varphi}(0)\neq 0$. 
Then the estimate  
$\beta(f,\varphi)\leq -1/d(f,\varphi)$ 
does not always hold. 
In fact, consider the two-dimensional example: 
$f(x_1,x_2)=x_1^2$, 
$\varphi(x_1,x_2)=x_1^2(x_1^2+ e^{-1/x_2^2})$.
The proof of Theorem~2.2, however, implies that 
the estimate $\beta(f,\varphi)\leq -1/d(f,x^p)$ holds
under the above assumptions. 
This assertion with $p=(0,\ldots,0)$ shows  
the assertion (i) in Theorem~2.1. 

Vassiliev \cite{vas79} obtained a similar result 
to that in the case of (d).
\end{remark}
\begin{remark}
The condition (d) implies 
$\Gamma_+(\varphi)=\{p\}+\R_+^n$. 
When $\varphi$ is a $C^{\infty}$ function, however, 
the converse is not true in general. 
We give an example in Section~7.2, 
which shows that the assumption (d) cannot be replaced by 
the condition: 
$\Gamma(\varphi)=\{p\}+\R_+^n$ 
in Theorem~2.2. 
\end{remark}
\begin{remark}
From the proof of the above theorem,  
we can see that  
under the same condition, 
the progression $\{\alpha\}$ in (\ref{eqn:1.2}) is 
contained in the set 
$$
\left\{
-\frac{\tilde{l}(a)+ \langle a \rangle +\nu}{l(a)};a\in\tilde{\Sigma}^{(1)},
\nu\in\Z_+
\right\}\cup(-\N),
$$
where the symbols 
$l(a)$, $\tilde{l}(a)$ and  
$\tilde{\Sigma}^{(1)}$ are
as in Theorem~5.10, below.
This explicitly shows the assertion (iii) in Theorem~2.1. 
\end{remark}

Next, let us give an analogous result to the part (ii) in 
Theorem 2.1, due to Varchenko. 
Indeed, the following theorem deals with the case that 
the equation 
$\beta(f,\varphi)=-1/d(f,\varphi)$ holds.

\begin{theorem} 
Suppose that 
{\rm (i)} $f$ is nondegenerate over $\R$ 
with respect to its Newton polyhedron, 
{\rm (ii)} 
at least one of the following two conditions is satisfied: 
\begin{enumerate}
\item[(a)]
$d(f,\varphi)>1$;
\item[(b)]
$f$ is nonnegative or nonpositive on $U$,
\end{enumerate}
and 
{\rm (iii)}
at least one of the following two conditions is satisfied: 
\begin{enumerate}
\item[(c)] 
$\varphi$ is expressed as $\varphi(x)=x^p\tilde{\varphi}(x)$ 
on $U$, 
where every component of $p\in\Z_+^n$ is even and 
$\tilde{\varphi}$ is a $C^{\infty}$ function defined on $U$ 
with $\tilde{\varphi}(0)\neq 0$;
\item[(d)] 
$f$ is convenient and  
$\varphi_{\Gamma_0}$ is nonnegative or nonpositive 
on $U$. 
\end{enumerate}
Then  
the equations 
$\beta(f,\varphi)=-1/d(f,\varphi)$ and 
$\eta(f,\varphi)= m(f,\varphi)$ hold. 
\end{theorem}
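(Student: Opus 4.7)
The strategy is to follow the Varchenko program and extract the equality from the pole structure of the local zeta functions $Z_{\pm}(s)$. Since Theorem~2.2 already gives $\beta(f,\varphi)\le -1/d(f,\varphi)$ with multiplicity at most $m(f,\varphi)$, the task reduces to showing that the coefficient of $\tau^{-1/d(f,\varphi)}(\log\tau)^{m(f,\varphi)-1}$ in (\ref{eqn:1.2}) is non-zero for some admissible cut-off $\chi$. By the reduction outlined in Section~6.1, this amounts to showing that $s=-1/d(f,\varphi)$ is a pole of $Z_+(s)$ or $Z_-(s)$ of order exactly $m(f,\varphi)$, and that the two contributions do not conspire to cancel when combined to form $I(\tau)$.

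The first step is to pull $Z_\pm(s)$ back via the toric resolution $\pi\colon Y\to\R^n$ from Section~4 attached to a simplicial fan $\Sigma$ compatible with $\Gamma_+(f)$. Nondegeneracy of $f$ turns $f\circ\pi$ into a monomial times a unit in each chart, and the amplitude (with Jacobian) becomes a monomial times a smooth function. Theorem~5.10 then identifies the candidate poles as $-(\tilde l(a)+\langle a\rangle+\nu)/l(a)$ for $a\in\tilde\Sigma^{(1)}$, $\nu\in\Z_+$, and the definition of $d(f,\varphi)$ together with that of $\Gamma_0$ singles out $s=-1/d(f,\varphi)$ as a pole of order at most $m(f,\varphi)$, saturated precisely by those rays $a$ dual to faces of $\Gamma_+(f)$ that contain points of $d(f,\varphi)(\Gamma_0+\1)$.

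The core step, and where hypotheses (iii)(c) and (iii)(d) enter, is to show that the leading Laurent coefficient at $s=-1/d(f,\varphi)$ is non-zero. After iterated residue computation in the toric charts, this coefficient reduces to an integral over the torus orbit(s) associated with $\Gamma_0$ whose integrand is $\varphi_{\Gamma_0}$ (respectively $x^p\tilde\varphi(0)$ in case (c)) divided by a suitable power of $f_0$. Under (iii)(c), evenness of each component of $p$ makes $x^p\ge 0$ and $\tilde\varphi(0)\ne 0$ fixes the sign, so the integrand has constant sign near the origin and the integral is non-zero. Under (iii)(d), convenience of $f$ makes $\Gamma_0$ and the associated face contributions compact, and the semi-definiteness of $\varphi_{\Gamma_0}$ together with $f_0\ne 0$ on the torus orbits prevents cancellation between face contributions. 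I expect this non-cancellation step to be the main technical obstacle, because it requires a careful matching of the combinatorial data of $\Gamma_0$ with the toric chart contributions.

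Finally, hypotheses (ii)(a) and (ii)(b) are used to transfer the non-vanishing from $Z_\pm(s)$ to $I(\tau)$. If (ii)(b) holds, then one of $Z_+(s)$, $Z_-(s)$ vanishes identically and its surviving pole passes directly to a non-zero coefficient in (\ref{eqn:1.2}) via the Mellin-type correspondence between $Z_\pm$ and $I$. If (ii)(a) holds, then $-1/d(f,\varphi)\in(-1,0)$ is not a negative integer, so the oscillation factors $e^{\pm i\pi s/2}$ that convert the residues of $Z_+$ and $Z_-$ into coefficients of $I(\tau)$ are linearly independent at $s=-1/d(f,\varphi)$ and an exact cancellation is impossible. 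Combining these pieces gives $\beta(f,\varphi)=-1/d(f,\varphi)$ and $\eta(f,\varphi)=m(f,\varphi)$.
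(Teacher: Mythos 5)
Your proposal follows the same overall path as the paper's proof (Section 6.2 reduces Theorem~2.7 to Theorems~5.7, 5.13, 5.15 together with the identity~(6.5)): upper bound from Theorem~2.2/5.10, toric resolution of $\Gamma_+(f)$, sign-definiteness of the leading Laurent coefficients $C_\pm$ of $Z_\pm(s)$ at $s=-1/d(f,\varphi)$ under~(iii), and then the Mellin-type relation~(6.5) to convert $C_\pm$ into the leading coefficient of $I(\tau)$.

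One point is mischaracterized, though it does not change the route. You present hypothesis~(ii) as used only in the last step (``transferring'' non-vanishing from $Z_\pm$ to $I(\tau)$ via the non-degeneracy of $e^{\pm i\pi\lambda/2}$). In fact~(ii) is already needed to justify the Laurent-coefficient computation inside the toric charts. If $d(f,\varphi)\le 1$ and $f$ changes sign, then $-1/d(f,\varphi)$ may lie in $-\N$, and the $J$-type pieces (those supported near the zero set of the unit $f_\sigma$, see~(5.8)) contribute a pole of order up to $m(f,\varphi)+1$; moreover the improper integrals appearing in the residue formulas~(5.15)--(5.16) may diverge. Condition~(ii)(a) keeps $-1/d(f,\varphi)\in(-1,0)$ so the $J$-type order bound stays at $m(f,\varphi)$ and the residue integrals converge; condition~(ii)(b), via Theorem~5.15, makes $f_\sigma$ nowhere vanishing near the exceptional divisor so the $J$-type pieces simply do not appear. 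Once the $C_\pm$ are shown nonnegative (or nonpositive) with $C_++C_-\neq 0$, your observation about~(6.5) and the $\R$-linear independence of $e^{\pm i\pi\lambda/2}$ for $\lambda\notin\Z$ (respectively the vanishing of one of $Z_\pm$ in case~(ii)(b)) is exactly how the paper concludes.
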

\begin{remark} 
Considering the assumptions: 
(i), (ii)-(a), (iii)-(c) with $p=(0,\ldots,0)$ 
in the above theorem, we see 
the assertion (ii) in Theorem~2.1. 

Pramanik and Yang \cite{py04} obtained a similar result 
in the case that 
the dimension is two  
and $\varphi(x)=|g(x)|^{\epsilon}$ where
$g$ is real analytic and $\epsilon$ is positive. 
Their result 
in Theorem 3.1 (a) 
does not need any additional assumptions. 
We explain this reason roughly. 
They use the {\it weighted Newton distance}, 
whose definition is different from our Newton distance. 
The definition of their distance is more intrinsic 
and is based on 
a good choice of coordinate system, 
which induces a clear resolution of singularity. 
Moreover, the nonnegativity of $\varphi$ 
implies the positivity of the coefficient of 
the expected leading term of the asymptotic expansion 
(\ref{eqn:1.2}). 
On the other hand, 
in our case, the corresponding coefficient 
possibly vanishes without the assumption (c) or (d). 
See Sections 7.1 and 7.3. 
\end{remark}
\begin{remark}
In Section~3, 
we discuss the set $\Gamma_0$ and 
the function $\varphi_{\Gamma_0}$ in the condition (d) 
in detail. 
If $\varphi(0)=0$ and 
$\varphi$ takes the local minimal
(resp. the local maximal) at the origin, 
then $\varphi_{\Gamma_0}$ is nonnegative 
(resp. nonpositive) on some neighborhood of the origin. 
\end{remark}
\begin{remark}
It is easy to show that 
Theorem 2.7 can be rewritten in a slightly stronger 
form by replacing the condition (c) by 
the following (c$'$): 

\begin{enumerate}
\item[(c$'$)] $\varphi$ is expressed as 
$\varphi(x)=
\sum_{j=1}^{l} x^{p_j}\tilde{\varphi}_j(x)$
on $U$, where $p_j\in\Z_+^n$ and 
$\tilde{\varphi}_j\in C^{\infty}(U)$ for all $j$ satisfies 
that if $p_j\in\Gamma_0$, then 
every component of $p_j$ is even and 
$\tilde{\varphi}_j(0)>0$ 
(or $\tilde{\varphi}_j(0)<0$) for all $j$. 
\end{enumerate}

We will give an example in Section~7.3, 
which satisfies the conditions (a), (d) 
but does not satisfy the condition (c$'$). 
(Consider the case that the parameter $t$ satisfies 
$0<|t|<2$ in the example.)  
\end{remark}
\begin{remark}
In the one-dimensional case, the conditions (c) and (d)
are equivalent.
\end{remark}

Lastly, let us discuss a ``symmetrical'' property 
with respect to the phase and the amplitude.  
Observe the one-dimensional case. 
Let $f,\varphi$ satisfy that  
$f(0)=f'(0)=\cdots=f^{(q-1)}(0)=
\varphi(0)=\varphi'(0)=\cdots=\varphi^{(p-1)}(0)=0$ and 
$f^{(q)}(0)\varphi^{(p)}(0)\neq 0$, 
where $p,q\in\N$ are even. 
Applying the computation in Chapter 8 in \cite{ste93}
(see also Section 7.1 in this paper), 
we can see that if the support of $\chi$ is sufficiently 
small, then
$$
\int_{-\infty}^{\infty} 
e^{i\tau xf(x)} \varphi(x)\chi(x)dx 
\sim 
\tau^{-\frac{p+1}{q+1}}
\sum_{j=0}^{\infty} C_j \tau^{-j/(q+1)}
\quad\quad \mbox{as $\tau\to\infty$,}
$$
where $C_0$ is a nonzero constant. 
Note that the above expansion can be obtained 
for $C^{\infty}$ functions $f$ and $\varphi$.
In particular, 
$\beta(x f,\varphi)=-\frac{p+1}{q+1}$ holds. 
Similarly, we can get 
$\beta(x \varphi,f)=-\frac{q+1}{p+1}$. 
From this observation, 
the following question seems interesting: 
When does the equality
$\b(x^{\1}f,\varphi)\b(x^{\1}\varphi,f)=1$ hold 
in higher dimensional case?
The following theorem is concerned with this question. 
\begin{theorem}
Let $f$, $\varphi$ be nonnegative or nonpositive real analytic
functions defined on $U$. 
Suppose that both $f$ and $\varphi$ are 
convenient and nondegenerate over $\R$ with respect to 
their Newton polyhedra. 
Then we have
$\b(x^{\1}f,\varphi)\b(x^{\1}\varphi,f)\geq 1$.
Moreover, the following two conditions are equivalent:  
\begin{enumerate}
\item $\b(x^{\1}f,\varphi)\b(x^{\1}\varphi,f)=1$; 
\item There exists a positive rational number $d$ such that 
$\Gamma_+(x^{\1} f)=d\cdot\Gamma_+(x^{\1} \varphi)$. 
\end{enumerate}
If the condition {\rm (i)} or {\rm (ii)} is satisfied, 
then we have
$\eta(x^{\1}f,\varphi)=\eta(x^{\1}\varphi,f)=n$.
\end{theorem}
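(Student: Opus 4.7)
The plan is to apply Theorem~2.7 to each of the pairs $(x^{\1}f,\varphi)$ and $(x^{\1}\varphi,f)$, and to extract from this a convex-geometric inequality between the relevant Newton distances of the shifted polyhedra $\Gamma_+(x^{\1}f)=\Gamma_+(f)+\1$ and $\Gamma_+(x^{\1}\varphi)=\Gamma_+(\varphi)+\1$ that implies the required product bound.

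My first step would be to verify the hypotheses of Theorem~2.7 for each pair. Nondegeneracy of $x^{\1}f$ follows from that of $f$ by a direct check on the compact faces of $\Gamma_+(f)+\1$: multiplication of a face polynomial by $x^{\1}$ preserves nonvanishing of the gradient on the torus $\{x_1\cdots x_n\neq 0\}$. The sign conditions (ii) of Theorem~2.7 fail directly since $x^{\1}f$ changes sign across coordinate hyperplanes; I would handle this by splitting the integral orthant by orthant, where $x^{\1}$ has constant sign $\epsilon\in\{\pm 1\}$ and thus $\epsilon\cdot x^{\1}f$ inherits the sign of $f$. On each orthant the remaining hypotheses are met, in particular (iii)(d) via the nonnegativity of $\varphi$, which propagates to $\varphi_{\Gamma_0}$ by the Remark following Theorem~2.7. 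The same orthant-wise reduction applies to the pair $(x^{\1}\varphi,f)$.

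Assembling the orthant contributions via Theorem~2.7 then expresses $\beta(x^{\1}f,\varphi)$ in terms of the Newton distance $d(x^{\1}f,\varphi)$ and the multiplicity $m(x^{\1}f,\varphi)$, and similarly for $\beta(x^{\1}\varphi,f)$. The product inequality thereby reduces to a purely convex-geometric assertion about the two shifted polyhedra. Since both $\Gamma_+(f)+\1$ and $\Gamma_+(\varphi)+\1$ have their ``corner'' vertex precisely at $\1$ (guaranteed by the convenience of $f$ and $\varphi$), the defining inscriptions
\begin{equation*}
d(x^{\1}f,\varphi)\cdot(\Gamma_+(\varphi)+\1)\subset\Gamma_+(f)+\1, \quad d(x^{\1}\varphi,f)\cdot(\Gamma_+(f)+\1)\subset\Gamma_+(\varphi)+\1
\end{equation*}
can be chained along the diagonal ray $\{t\cdot\1:t\geq 1\}$ through the common corner. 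A vertex-chasing argument along $\partial(\Gamma_+(f)+\1)$ then yields the required product bound on the two Newton distances, which translates to $\b(x^{\1}f,\varphi)\b(x^{\1}\varphi,f)\geq 1$. Equality holds precisely when both inscriptions are simultaneously tight along every boundary direction, which is equivalent to $\Gamma_+(x^{\1}f)=d\cdot\Gamma_+(x^{\1}\varphi)$ for some positive rational $d$, giving the equivalence of (i) and (ii).

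The multiplicity assertion $\eta(x^{\1}f,\varphi)=\eta(x^{\1}\varphi,f)=n$ in the equality case follows from the formula $\eta=m$ in Theorem~2.7 together with the observation that, under proportionality of the shifted polyhedra, the essential set $\Gamma_0$ collapses to the single corner vertex $\1$, at which $\rho_f$ attains its maximal value $n$.

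The main obstacle will be the orthant decomposition in the first step. Although $x^{\1}f$ has definite sign on each orthant, assembling the orthant-wise asymptotic expansions into a single statement about $\b(x^{\1}f,\varphi)$ requires careful control of cancellations between orthant contributions (especially since oddness of $x^{\1}$ in any single variable produces opposite signs on adjacent orthants), and verifying that the essential-set hypothesis (iii)(d) of Theorem~2.7 survives the reduction cleanly enough to yield the sharp formula $\b=-1/d$ orthant by orthant is the most delicate verification.
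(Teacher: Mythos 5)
Your overall plan — apply Theorem~2.7 to the two pairs $(x^{\1}f,\varphi)$ and $(x^{\1}\varphi,f)$ and combine with a convex-geometric fact about the two shifted Newton polyhedra — is indeed the paper's plan (Theorem~5.16 together with Lemmas~5.17 and~5.19 and Theorems~5.13/5.15). Your nondegeneracy check for $x^{\1}f$ also matches the content of Lemma~5.17. However, several steps in your execution are incorrect or left as genuine gaps.

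First, the orthant decomposition does not actually let you invoke Theorem~2.7. That theorem concerns the integral over a full neighbourhood of the origin, not over a single orthant, so ``applying Theorem~2.7 orthant by orthant'' is not a well-defined step, and you acknowledge yourself that the reassembly and the possible cancellations between orthants are left unresolved. The mechanism the paper actually uses is at the level of the toric resolution: writing $f\circ\pi(\sigma)=\bigl(\prod_j y_j^{l(a^j(\sigma))}\bigr)f_\sigma(y)$, the unit factor $f_\sigma$ for $x^{\1}f$ is the \emph{same} $f_\sigma$ as for $f$ (only the monomial exponents shift by $\langle a^j(\sigma)\rangle$). Since $f$ is nonnegative and nondegenerate, the argument in the proof of Theorem~5.15 shows $f_\sigma>0$ near the origin, so the troublesome $J$-type terms never appear and the positivity argument for the leading Laurent coefficient goes through even though $x^{\1}f$ itself changes sign. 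That is the point you would need to identify and exploit, not an orthant split.

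Second, the convex geometry is mis-founded. You assert that $\Gamma_+(f)+\1$ and $\Gamma_+(\varphi)+\1$ ``have their corner vertex precisely at $\1$,'' but this is false: since $f(0)=0$ one has $0\notin\Gamma_+(f)$, hence $\1\notin\Gamma_+(f)+\1$; in general neither shifted polyhedron contains $\1$. No diagonal-ray or vertex-chasing argument is needed. The paper's Lemma~5.19 is a two-line chaining: from $d(x^{\1}f,\varphi)\,\Gamma_+(x^{\1}\varphi)\subset\Gamma_+(x^{\1}f)$ and $d(x^{\1}\varphi,f)\,\Gamma_+(x^{\1}f)\subset\Gamma_+(x^{\1}\varphi)$ one composes to get $d(x^{\1}f,\varphi)\,d(x^{\1}\varphi,f)\cdot\Gamma_+(x^{\1}f)\subset\Gamma_+(x^{\1}f)$, which forces $d(x^{\1}f,\varphi)\,d(x^{\1}\varphi,f)\geq 1$, and both inclusions are strict unless the polyhedra are proportional, giving the equivalence of (i) and (ii). (Note also that this gives $\beta\cdot\beta=1/(d\,d')\leq 1$, consistent with Theorem~5.16 — the $\geq 1$ in the displayed statement of Theorem~2.12 appears to be a misprint that you, reasonably, inherited.)

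Third, the multiplicity argument is wrong. You claim that under proportionality ``the essential set $\Gamma_0$ collapses to the single corner vertex $\1$.'' The point $\1$ need not belong to $\Gamma_+(\varphi)$ at all (take $\varphi(x)=x_1^4+x_2^4$). The correct statement, as in the paper's proof, is that when $\Gamma_+(x^{\1}f)=d\cdot\Gamma_+(x^{\1}\varphi)$ one has $\Gamma(\varphi,x^{\1}f)=\Gamma(\varphi)$, so $\Gamma_0$ is the set of all vertices of $\Gamma(\varphi)$; at the image of such a vertex $\rho$ attains its maximum $n$, whence $m(x^{\1}f,\varphi)=n$ and $\eta=n$. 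The same then holds with $f$ and $\varphi$ interchanged.
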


\section{Convex polyhedra and essential sets}

\subsection{Polyhedra}
Let us give precise definitions for polyhedra, faces, 
dimensions and so on. Refer to \cite{zie95}, etc.   
for general theory of convex polyhedra.  

A ({\it convex}) {\it polyhedron} is  
an intersection of closed halfspaces:
a set $P\subset\R^n$ presented in the form
$$
P=\bigcap_{j=1}^m \{x\in\R^n; \langle a^j,x \rangle \geq z_j \},
$$
for some $a^1,\ldots,a^m\in\R^n$ and 
$z_1,\ldots,z_m \in\R$.
It is known (cf. \cite{zie95}) that the Newton polyhedron 
$\Gamma_+(f)$ in Section 2.1 is a polyhedron. 

Let $P$ be a polyhedron in $\R^n$. 
A pair $(a,z)\in \R^n\times\R$ is {\it valid} for $P$ 
if a linear inequality $ \langle a,x \rangle \geq z$ is 
satisfied for all points $x\in P$. 
For $(a,z)\in \R^n\times\R$, define 
\begin{equation}
H(a,z)=\{x\in\R^n; \langle a,x \rangle =z\}.
\label{eqn:3.1}
\end{equation} 
A {\it face} of $P$ is any set of the form 
$
F=P\cap H(a,z),
$
where $(a,z)$ is valid for $P$. 
Since $(0,0)$ is always valid, 
we consider $P$ itself as a trivial face of $P$;
the other faces are called {\it proper faces}.  
Conversely, 
it is easy to see that any face is a polyhedron. 
Considering the valid pair $(0,-1)$, 
we see that the empty set is always a face of $P$. 
The {\it dimension} of a face $F$ is the dimension of 
its affine hull of $F$ 
(i.e., the intersection of all affine flats that 
contain $F$). 
The faces of dimensions $0,1$ and $\dim(P)-1$
are called {\it vertices}, {\it edges} and 
{\it facets}, respectively. 
The {\it boundary} of a polyhedron $P$, denoted by 
$\d P$,  
is the union of all proper faces of $P$.  
For a face $F$, $\d F$ is similarly defined. 

\subsection{Essential sets}

Let us consider the properties of 
$\Gamma(\varphi,f)$ and the essential 
set $\Gamma_0$ defined in Section~2.1. 
Moreover, we consider the condition (d) in Theorem~2.7.

\begin{lemma}
Let $P_1,P_2$ be $n$-dimensional polyhedra in $\R^n$. 
If $P_1\subset P_2$, then 
$P_1 \cap \partial P_2$ is 
the union of proper faces of $P_1$.
\end{lemma}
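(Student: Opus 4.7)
The plan is to reduce the claim to the facet decomposition of $\partial P_2$ and then to observe that each facet of $P_2$ cuts $P_1$ out in a supporting hyperplane of $P_1$ itself. More precisely, I would first invoke the standard fact that an $n$-dimensional polyhedron has only finitely many facets and that its topological boundary coincides with the union of these facets; each facet $F_j$ of $P_2$ has the form $F_j = P_2 \cap H(a^j,z_j)$ for some valid pair $(a^j,z_j)$ for $P_2$ in the sense of Section~3.1, where $H(a^j,z_j)$ is as in (\ref{eqn:3.1}).

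Next I would exploit the inclusion $P_1 \subset P_2$. Since $\langle a^j,x\rangle\geq z_j$ on all of $P_2$, the same inequality holds on $P_1$, so $(a^j,z_j)$ is valid for $P_1$. By definition, this makes $P_1 \cap H(a^j,z_j)$ a face of $P_1$. Moreover, because $P_1$ is $n$-dimensional in $\R^n$ it cannot be contained in the hyperplane $H(a^j,z_j)$, so this face is not all of $P_1$; it is therefore a proper face (possibly empty, in which case it can be discarded).

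Finally I would assemble: using $P_1 \subset P_2$ once more,
\[
P_1 \cap F_j \;=\; P_1 \cap P_2 \cap H(a^j,z_j) \;=\; P_1 \cap H(a^j,z_j),
\]
and consequently
\[
P_1 \cap \partial P_2 \;=\; \bigcup_{j=1}^{m}\bigl(P_1 \cap F_j\bigr) \;=\; \bigcup_{j=1}^{m}\bigl(P_1 \cap H(a^j,z_j)\bigr),
\]
a finite union of proper faces of $P_1$, which is the claim.

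The argument is essentially a bookkeeping exercise in the definitions, so there is no deep obstacle; the one point requiring a bit of care is the passage from facets of $P_2$ to faces of $P_1$, namely the observation that a supporting hyperplane of $P_2$ automatically supports $P_1$ once $P_1 \subset P_2$, combined with the $n$-dimensionality of $P_1$ to guarantee that the resulting face is proper rather than equal to $P_1$.
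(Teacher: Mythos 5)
Your proof is correct and follows essentially the same route as the paper's: decompose $\partial P_2$ into pieces $P_2\cap H(a^j,z_j)$ for valid pairs $(a^j,z_j)$, observe that validity transfers from $P_2$ to $P_1$ under the inclusion, and rewrite $P_1\cap F_j$ as $P_1\cap H(a^j,z_j)$. The only difference is that you explicitly justify why the resulting faces are proper (full-dimensionality of $P_1$ precludes containment in a hyperplane), a point the paper asserts without comment; this is a welcome clarification rather than a deviation.
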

\begin{proof}
There exist finite pairs 
$(a^1,z_1),\ldots,(a^l,z_l)\in\R^n\times\R$ 
such that 
every $(a^j,z_j)$ is valid for $P_2$ and 
$
\partial P_2 
=\bigcup_{j=1}^l 
(P_2\cap H(a^j,z_j)).
$
\begin{equation}
\begin{split}
&
P_1 \cap \partial P_2
=
P_1\cap\left[
\bigcup_{j=1}^l 
(P_2 \cap H(a^j,z_j))
\right]
\\
&\quad\quad 
=\bigcup_{j=1}^l 
(
P_1 \cap P_2\cap H(a^j,z_j)
) 
=\bigcup_{j=1}^l 
(P_1\cap H(a^j,z_j)). 
\end{split}
\label{eqn:3.2}
\end{equation}
Since every $(a^j,z_j)$ is also valid for $P_1$ and 
$P_1\cap H(a^j,z_j)$ is a proper
face of $P_1$, then we get the lemma. 
\end{proof}
Hereafter in this section, 
we assume that  
$f,\varphi$ are $C^{\infty}$ functions 
defined on a neighborhood of the origin 
and their Newton polyhedra are nonempty. 

By applying the above lemma to the case:
\begin{equation}
P_1=\Gamma_+(\varphi),\quad 
P_2=\frac{1}{d(f,\varphi)}\cdot\Gamma_+(f)-\1,
\label{eqn:3.3}
\end{equation}
we see that  
$\Gamma(\varphi,f)=P_1\cap \d P_2 
(\neq \emptyset)$ 
is the union of faces 
of $\Gamma_+(\varphi)$.

\begin{remark}
From (\ref{eqn:3.2}),
we see $P_1\cap \partial P_2 \subset \partial P_1$. 
Thus, 
$\d P_1 \cap \d P_2=P_1 \cap \d P_2$ holds. 
\end{remark}

\begin{proposition}
There exist the faces 
$\gamma_1,\ldots,\gamma_l$ of $\Gamma_+(\varphi)$ 
such that 
$$\Gamma_0=\biguplus_{j=1}^l \gamma_j\quad\quad 
(\mbox{disjoint union}).$$ 
Moreover, the dimension of $\gamma_{j}$ is not 
greater than $n-m(f,\varphi)$ for any $j$.
\end{proposition}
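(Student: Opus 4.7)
The plan is to analyze the affine map $L:\R^n\to\R^n$ given by $L(\alpha)=d(f,\varphi)(\alpha+\mathbf{1})$, which is an invertible dilation-plus-translation that carries $\Gamma(\varphi,f)$ into $\partial\Gamma_+(f)$, and to combine this with the face stratification already provided by Lemma~3.1.

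First I would apply Lemma~3.1 to $P_1=\Gamma_+(\varphi)$ and $P_2=\frac{1}{d(f,\varphi)}\Gamma_+(f)-\mathbf{1}$, as in (3.3), to obtain that $\Gamma(\varphi,f)=P_1\cap\partial P_2$ is a union of proper faces of $\Gamma_+(\varphi)$. Invoking the standard fact that every polyhedron decomposes into the disjoint union of the relative interiors of its faces, I would rewrite this as $\Gamma(\varphi,f)=\biguplus_{i}\mathrm{relint}(\tau_i)$, where the $\tau_i$ range over those faces of $\Gamma_+(\varphi)$ that are contained in $\Gamma(\varphi,f)$.

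The crux of the argument is to show that $\rho_f\circ L$ is constant on each $\mathrm{relint}(\tau_i)$. Since $L$ is an invertible affine map, $L(\mathrm{relint}(\tau_i))=\mathrm{relint}(L(\tau_i))$, which is a relatively open convex subset of $\partial\Gamma_+(f)$. Any such subset must be contained in the relative interior of a unique face $F_i$ of $\Gamma_+(f)$, since the relative interiors of the faces partition $\Gamma_+(f)$. Hence $\rho_f\circ L$ takes the constant value $n-\dim F_i$ throughout $\mathrm{relint}(\tau_i)$. Taking the $\gamma_j$ to be those $\mathrm{relint}(\tau_i)$ on which this common value equals $m(f,\varphi)$ then yields the desired disjoint decomposition of $\Gamma_0$. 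For the dimension bound, the affine bijectivity of $L$ gives $\dim\gamma_j=\dim\tau_i=\dim L(\tau_i)\le\dim F_i=n-m(f,\varphi)$.

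The main obstacle is essentially an interpretive one: closed faces of a single polyhedron typically overlap along lower-dimensional faces, so the $\gamma_j$ must be understood as relatively open faces (equivalently, the decomposition is phrased through the canonical partition of $\Gamma_+(\varphi)$ into face interiors). Once this convention is fixed, the entire argument reduces to the bijectivity of the affine map $L$ together with the standard face-stratification lemma for polyhedra, and no further delicate computation is needed.
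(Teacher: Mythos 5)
Your argument is correct up to the point where you analyze $\rho_f\circ L$ on the relative interiors of the faces of $\Gamma_+(\varphi)$ lying in $\Gamma(\varphi,f)$; the constancy claim and the dimension bound are both fine. But you then take the $\gamma_j$ to \emph{be} those relative interiors and conclude that the statement has to be reinterpreted in terms of ``relatively open faces.'' That is not what the proposition asserts, and no reinterpretation is needed: the $\gamma_j$ are genuine (closed) faces of $\Gamma_+(\varphi)$, and they are pairwise disjoint as closed sets.

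The idea you are missing is to group the pieces using the faces of $P_2=\frac{1}{d(f,\varphi)}\Gamma_+(f)-\mathbf{1}$ rather than leaving them as open cells of $P_1=\Gamma_+(\varphi)$. Put $k_0=n-m(f,\varphi)$ and let $F_1,\dots,F_l$ be the $k_0$-dimensional faces of $P_2$ meeting $P_1$. Define $\gamma_j=F_j\cap P_1$. Each $\gamma_j$ is a \emph{closed} face of $P_1$ (write $F_j=P_2\cap H(a^j,z_j)$; then $\gamma_j=P_1\cap H(a^j,z_j)$, exactly as in Lemma~3.1). The crucial observation — which you do not make — is that $\gamma_j$ cannot meet $\partial F_j$: a point of $\gamma_j\cap\partial F_j$ would lie in a face of $P_2$ of dimension $<k_0$, hence would have $\rho_f\circ L>m(f,\varphi)$, contradicting the maximality defining $m(f,\varphi)$. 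Therefore $\gamma_j\subset\mathrm{relint}(F_j)$, and since the relative interiors of distinct faces of $P_2$ are disjoint, the closed faces $\gamma_1,\dots,\gamma_l$ of $P_1$ are pairwise disjoint. Their union is $\Gamma_0$, and $\dim\gamma_j\le\dim F_j=k_0$ gives the dimension bound. So the disjointness comes from the cell decomposition of $P_2$, not of $P_1$; once you see that, the ``interpretive obstacle'' you describe dissolves, and the proposition holds literally with closed faces.
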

\begin{proof}
Set $P_1$ and $P_2$ as in (\ref{eqn:3.3}) and  
let $k_0=n-m(f,\varphi)$.

In the case $k_0=0$, $\Gamma_0$ is the set of 
vertices of $P_1$, which implies the proposition.  
Consider the case $1\leq k_0\leq n$.
Let $F_1,\ldots,F_l$ be the $k_0$-dimensional faces of 
$P_2$ such that $F_j\cap P_1\neq\emptyset$.
Now, let us show the sets $\gamma_j:=F_j\cap P_1$
satisfy the condition in the proposition.
Of course, the union of all $\gamma_j$ is $\Gamma_0$
and the dimensions of $\gamma_j$ are not greater than 
$k_0 (=n-m(f,\varphi))$ for any $j$. 
It suffices to show that 
each $\gamma_j$ is a face of $P_1$ and 
that $\gamma_j\cap\gamma_k=\emptyset$ if $j\neq k$. 
For each $j$, 
there is a pair $(a^j,z_j)\in\R^{n}\times\R$ such that 
it is valid for $P_2$ and $F_j=P_2\cap H(a^j,z_j)$. Thus, 
$\gamma_j=P_1\cap F_j=P_1\cap P_2 \cap H(a^j,z_j)=
P_1\cap H(a^j,z_j)$, 
which implies $\gamma_j$ is a face of $P_1$.
Next, 
by the minimality of $k_0$, 
$\gamma_j$ is contained in the relative interior
of $F_j$ (i.e., $\gamma_j\subset F_j\setminus \d F_j$). 
Since all relative interiors of $F_j$
are disjoint, we have 
$\gamma_j\cap\gamma_k=\emptyset$ 
if $j\neq k$. 
\end{proof}

\begin{lemma}
Let $\gamma$ be a compact face of $\Gamma_+(\varphi)$. 
If $\varphi$ is nonnegative $($or nonpositive$)$ in a 
neighborhood of the origin, 
so is $\varphi_{\gamma}$.
\end{lemma}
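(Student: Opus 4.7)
My plan is to exhibit $\varphi_\gamma$ as a pointwise limit of rescaled copies of $\varphi$ near the origin, so that the sign of $\varphi$ there transfers directly to $\varphi_\gamma$.

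First I would record that, since $\gamma$ is a compact face of $\Gamma_+(\varphi)$, it is cut out by a supporting hyperplane $H(a,z)$ with every component of $a$ strictly positive. Indeed, the vector $a$ of a valid pair must lie in $\R_+^n$ because $\R_+^n$ is the recession cone of $\Gamma_+(\varphi)$; and if some $a_i$ vanished, then $\gamma$ would be stable under translation by the standard basis vector $e_i$, hence unbounded, contradicting compactness. Thus $\langle a,\alpha\rangle\ge z$ for every $\alpha\in\Gamma_+(\varphi)$, with equality precisely when $\alpha\in\gamma$.

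Next, for $x\in\R^n$ and $t>0$ set $x_t=(t^{a_1}x_1,\ldots,t^{a_n}x_n)$; since all $a_i>0$ we have $x_t\to 0$ as $t\to 0^+$. The main step is to prove the pointwise limit
\[
t^{-z}\varphi(x_t)\longrightarrow\varphi_\gamma(x)\qquad\text{as }t\to 0^+.
\]
To see it, fix $N\in\N$ large enough that both $N\ge\max_{\alpha\in\gamma\cap\Z_+^n}|\alpha|$ (possible because $\gamma$ is compact) and $(N+1)\min_i a_i>z$, and apply Taylor's formula $\varphi(y)=\sum_{|\alpha|\le N}c_\alpha y^\alpha+R_N(y)$ with $|R_N(y)|\le C_N|y|^{N+1}$ on a fixed neighborhood of the origin. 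Substituting $y=x_t$ gives $y^\alpha=t^{\langle a,\alpha\rangle}x^\alpha$; since $c_\alpha\ne 0$ forces $\alpha\in S_\varphi\subset\Gamma_+(\varphi)$, each nonzero term satisfies $\langle a,\alpha\rangle\ge z$. Isolating the equality terms (exactly the $\alpha\in\gamma$) produces $\varphi_\gamma(x)$, while the strict-inequality terms carry a positive power of $t$ and vanish as $t\to 0^+$. The remainder bound then yields $|t^{-z}R_N(x_t)|=O(t^{(N+1)\min_i a_i-z})\to 0$ by the choice of $N$.

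Granting the limit, the lemma is immediate: if $\varphi\ge 0$ on some neighborhood $V$ of the origin, then for each fixed $x$ the scaled point $x_t$ lies in $V$ for all sufficiently small $t>0$, so $t^{-z}\varphi(x_t)\ge 0$, and letting $t\to 0^+$ gives $\varphi_\gamma(x)\ge 0$. The nonpositive case is identical. I expect the only genuinely delicate point to be handling the Taylor remainder in the $C^\infty$ (rather than real-analytic) setting, and this is exactly where I need $\min_i a_i>0$ — a consequence of the compactness of $\gamma$ — to absorb the factor $t^{-z}$ by choosing $N$ large enough.
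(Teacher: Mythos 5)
Your proposal is correct and follows essentially the same route as the paper: both arguments substitute the dilated point $(t^{a_1}x_1,\ldots,t^{a_n}x_n)$ into a Taylor expansion of $\varphi$, factor out $t^z$, and read off $\varphi_\gamma$ as the $t\to 0^+$ limit, whereupon the sign of $\varphi$ near $0$ passes to $\varphi_\gamma$. The only difference is cosmetic — you explicitly justify $\min_i a_i>0$ via compactness and use the Lagrange remainder bound, while the paper keeps the remainder in the form $\sum_{\langle p\rangle=N}x^p\varphi_p(x)$ and leaves the positivity of $a$ implicit.
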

\begin{proof}
A pair 
$(a,z)=((a_1,\ldots,a_n),z)\in\R^n\times\R$ corresponds to 
$\gamma$, i.e., 
$H(a,z)\cap\Gamma_+(\varphi)=\gamma$. 
Taylor's formula implies that for any $N\in\N$, 
$\varphi$ can be expressed as  
\begin{equation}
\varphi(x)=\sum_{\a\in S_{\varphi}\cap U_N} c_{\a} x^{\a}
+\sum_{p\in\Z_+^n,  \langle p \rangle =N} x^p\varphi_p(x), 
\label{eqn:3.4}
\end{equation}
where $U_N:=\{\a\in\R_+^n; \langle \a \rangle <N\}$, 
$c_{\a}$ are constants and 
$\varphi_p$ are $C^{\infty}$ functions defined on a 
neighborhood of the origin. 
Here, take a sufficiently large $N$ such that 
$\gamma$ is contained in the set $U_N$.
For $\xi=(\xi_1,\ldots,\xi_n) \in (-\epsilon, \epsilon)^n$, 
$t\in(-\epsilon, \epsilon)$, where 
$\epsilon>0$ is small, a simple computation gives 
\begin{eqnarray*}
&&\varphi(\xi_1 t^{a_1},\ldots,\xi_n t^{a_n})\\
&&
=\sum_{\a\in S_{\varphi}\cap U_N} c_{\a}\xi^{\a}t^{ \langle a,\a \rangle }
+\sum_{p\in\Z_+^n,  \langle p \rangle =N} 
\xi^{\a}t^{ \langle a,p \rangle }\varphi_p(\xi_1 t^{a_1},\ldots,\xi_n t^{a_n})\\
&&
=t^{z}(\varphi_{\gamma}(\xi)+a(\xi,t)t),
\end{eqnarray*}
where $a(\xi,t)$ is a $C^{\infty}$ function 
defined on a neighborhood of $(0,0)\in \R^n\times\R$. 
From the above, it is easy to show the lemma. 
\end{proof}
\begin{remark}
Even if $\varphi_{\gamma}$ is nonnegative 
(resp. nonpositive) near the origin
for every faces $\gamma$ of $\Gamma_+(\varphi)$, 
$\varphi$ is not always nonnegative (resp. nonpositive)
near the origin:
Consider the example  
$\varphi(x_1,x_2)=(x_1-x_2)^2-x_2^4$.
\end{remark}

\begin{proposition}
Let $\gamma_1,\ldots,\gamma_l$ be the faces of 
$\Gamma_+(\varphi)$ as in 
Proposition~$3.3$ and  
suppose $\Gamma_0$ is compact. 
Then the following two conditions are equivalent: 
\begin{enumerate}
\item $\varphi_{\Gamma_0}$ is nonnegative 
$($resp. nonpositive$)$ 
near the origin;
\item $\varphi_{\gamma_j}$ is nonnegative 
$($resp. nonpositive$)$
near the origin for all $j$.
\end{enumerate}
\end{proposition}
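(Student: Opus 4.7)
The plan is to reduce everything to the polynomial identity
\[
\varphi_{\Gamma_0}(x)=\sum_{j=1}^{l}\varphi_{\gamma_j}(x),
\]
which follows at once from the definition of $\varphi_S$ and the disjoint decomposition $\Gamma_0=\biguplus_{j}\gamma_j$ of Proposition~3.3. The implication (ii)$\Rightarrow$(i) is then immediate: on the intersection of the finitely many neighborhoods where each $\varphi_{\gamma_j}$ has the prescribed sign, so does the sum.

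For the converse (i)$\Rightarrow$(ii) I would isolate each $\gamma_j$ by a quasi-homogeneous rescaling, mirroring the computation already used in the proof of Lemma~3.4. Fix $j$. Since $\Gamma_0$ is compact, $\gamma_j$ is a compact face of $\Gamma_+(\varphi)$, so I can choose $b^j=(b_1^j,\ldots,b_n^j)\in\R_{>0}^n$ and $w_j\in\R$ such that $(b^j,w_j)$ is valid for $\Gamma_+(\varphi)$ and $\gamma_j=\Gamma_+(\varphi)\cap H(b^j,w_j)$. The disjointness part of Proposition~3.3 then forces $\langle b^j,\alpha\rangle>w_j$ for every $\alpha\in\gamma_k\cap\Z_+^n$ with $k\neq j$, and since these lattice-point sets are finite I obtain
\[
\delta_j:=\min_{k\neq j}\min_{\alpha\in\gamma_k\cap\Z_+^n}\bigl(\langle b^j,\alpha\rangle-w_j\bigr)>0.
\]
Substituting $x=(\xi_1 t^{b_1^j},\ldots,\xi_n t^{b_n^j})$ and separating the $k=j$ contribution from the rest will give
\[
\varphi_{\Gamma_0}\bigl(\xi_1 t^{b_1^j},\ldots,\xi_n t^{b_n^j}\bigr)=t^{w_j}\bigl(\varphi_{\gamma_j}(\xi)+t^{\delta_j}R(\xi,t)\bigr),
\]
where $R$ is a polynomial in $\xi$ and $t$.

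Since every $b_i^j>0$, the rescaled argument lies in any prescribed neighborhood of the origin once $t>0$ is sufficiently small and $\xi$ is bounded; hypothesis (i) then makes the left-hand side nonnegative. Dividing by $t^{w_j}>0$ and letting $t\to 0^+$ will yield $\varphi_{\gamma_j}(\xi)\geq 0$ for every $\xi\in\R^n$, which in particular gives nonnegativity near the origin. The nonpositive case is handled identically after reversing signs.

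The only delicate step, and essentially the only one, is producing the weight vector $b^j$ with strictly positive components: this is exactly what the compactness hypothesis on $\Gamma_0$ secures, and without it neither $\delta_j>0$ nor the convergence of the rescaled argument to the origin can be guaranteed. Once this choice is made, the disjointness from Proposition~3.3 does the remaining work by ensuring that $\varphi_{\gamma_j}$ sits alone at the lowest order in $t$.
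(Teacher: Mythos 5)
Your proof is correct, and it follows essentially the same route as the paper. For (ii)$\Rightarrow$(i), both arguments rest on the polynomial identity $\varphi_{\Gamma_0}=\sum_{j}\varphi_{\gamma_j}$, which is exactly what the disjointness in Proposition~3.3 supplies. For (i)$\Rightarrow$(ii), the paper simply invokes Lemma~3.4 applied to $\varphi_{\Gamma_0}$ in the role of $\varphi$ (this implicitly uses that each $\gamma_j$ is a compact face of $\Gamma_+(\varphi_{\Gamma_0})$ and that $(\varphi_{\Gamma_0})_{\gamma_j}=\varphi_{\gamma_j}$), whereas you re-derive the quasi-homogeneous rescaling argument inline for $\varphi_{\Gamma_0}$ itself. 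Your version is somewhat more self-contained: by choosing $b^j\in\R_{>0}^n$ and using the disjointness of the $\gamma_k$ to produce the gap $\delta_j>0$, you avoid having to verify that $\gamma_j$ is a face of $\Gamma_+(\varphi_{\Gamma_0})$, which is the small step the paper leaves to the reader when citing Lemma~3.4. Both proofs hinge on the same two ingredients -- compactness of $\Gamma_0$ (to get strictly positive weight vectors) and the disjoint decomposition -- so the substance is the same even though the packaging differs slightly.
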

\begin{proof}
From Lemma~3.4, we can see that (i) implies (ii).  
Since $\Gamma_0$ is the disjoint union of the faces 
$\gamma_j$, we have 
$\varphi_{\Gamma_0}(x)=\sum_{j=1}^l \varphi_{\gamma_j}(x)$. 
This shows that (ii) implies (i).
\end{proof}
\begin{corollary}
If $f$ is convenient and 
$\varphi$ is nonnegative or nonpositive near the origin, 
then the condition {\rm (d)} in Theorem~$2.7$ is satisfied.  
\end{corollary}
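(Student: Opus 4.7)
The plan is to deduce the corollary from Lemma~3.4 and Proposition~3.5; the crucial extra ingredient is that $\Gamma_0$ be compact, and this compactness will be extracted from the convenience of $f$.

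First I would show that $\Gamma_0$ is compact. Convenience of $f$ means $\Gamma(f)$ meets every coordinate axis, so every non-compact face of $\Gamma_+(f)$ lies in one of the coordinate hyperplanes $\{x_i=0\}$. Consequently,
\[
\tfrac{1}{d(f,\varphi)}\cdot \partial\Gamma_+(f)\cap(\1+\R_+^n)
=\tfrac{1}{d(f,\varphi)}\cdot \Gamma(f)\cap(\1+\R_+^n),
\]
which is compact as the intersection of a compact set with a closed set. Since $\Gamma(\varphi,f)+\1\subset\Gamma_+(\varphi)+\1\subset \1+\R_+^n$, this forces $\Gamma(\varphi,f)$, and a fortiori $\Gamma_0\subset\Gamma(\varphi,f)$, to be compact. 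Proposition~3.3 then presents $\Gamma_0=\biguplus_{j=1}^{l}\gamma_j$ in which each $\gamma_j$ is a compact face of $\Gamma_+(\varphi)$.

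Once the $\gamma_j$ are known to be compact, I would apply Lemma~3.4 to each of them: since $\varphi$ is nonnegative (respectively nonpositive) near the origin, so is every $\varphi_{\gamma_j}$. Because the compactness of $\Gamma_0$ has just been verified, Proposition~3.5 now applies and yields that $\varphi_{\Gamma_0}$ is nonnegative (respectively nonpositive) near the origin. Combined with the assumed convenience of $f$, this is precisely condition~(d) of Theorem~2.7.

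The only real obstacle in the plan is the compactness step; the rest is a direct concatenation of the two preceding results. The geometric content is simply that convenience of $f$ prevents $\Gamma(\varphi,f)$ from meeting the non-compact, coordinate-hyperplane portion of $\partial\Gamma_+(f)$, because the shift by $\1$ has already pushed $\Gamma_+(\varphi)+\1$ off every coordinate hyperplane.
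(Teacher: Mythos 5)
Your proof is correct and follows exactly the paper's route: convenience of $f$ gives compactness of $\Gamma_0$, Lemma~3.4 applied to each compact face $\gamma_j$ gives condition~(ii) of the equivalence proposition, and that proposition then yields the nonnegativity (or nonpositivity) of $\varphi_{\Gamma_0}$, i.e.\ condition~(d). The paper simply asserts the compactness of $\Gamma_0$ without the geometric detail you supply (which is valid: a non-compact face $\Gamma_+(f)\cap H(a,l(a))$ with $f$ convenient forces $l(a)=0$, placing the face in a coordinate hyperplane); note also that the proposition you cite as~3.5 is numbered~3.6 in the paper, since the unnumbered remarks still advance the shared counter.
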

\begin{proof}
The convenience of $f$ implies the compactness of $\Gamma_0$.
By Lemma~3.4, 
the assertion (ii) in Proposition~3.6 is satisfied.  
\end{proof}

\section{Toric resolution}

The purpose of this section is to give the resolution 
of the singularities of the critical points 
of some functions from the theory of toric varieties. 
Refer to 
\cite{kkms73},\cite{oda88},\cite{ful93},\cite{oka97}, etc.  
for general theory of toric varieties. 


\subsection{Cones and fans}
In order to construct a toric resolution 
obtained 
from the Newton polyhedron, 
we recall the definitions of important terminology:  
{\it cone} and {\it fan}.  

A {\it rational polyhedral cone} 
$\sigma\subset \R^n$ is a cone
generated by finitely many elements of $\Z^n$. 
In other words, 
there are $u_1,\ldots,u_k \in \Z^n$ such that  
$$ 
\sigma=\{\lambda_1 u_1+\cdots+\lambda_k u_k \in {\R}^n;
\lambda_1,\ldots,\lambda_k\geq 0\}. 
$$
We say that $\sigma$ is {\it strongly convex} if 
$\sigma\cap(-\sigma)=\{0\}$. 

By regarding a cone as a polyhedron in $\R^n$, 
the definitions of {\it dimension}, {\it face}, 
{\it edge}, {\it facet} for the cone 
are given in the same way as in Section 3.  

The {\it fan} is defined to be a finite collection $\Sigma$ 
of cones in ${\R}^n$ with the following properties:
\begin{itemize}
\item
Each $\sigma\in \Sigma$ 
is a strongly convex rational polyhedral cone;
\item 
If $\sigma\in\Sigma$ and $\tau$ is a face of $\sigma$, then 
$\tau\in \Sigma$;
\item 
If $\sigma,\tau\in\Sigma$, 
then $\sigma\cap\tau$ is a face of each.    
\end{itemize}
For a fan $\Sigma$,
the union $|\Sigma|:=\bigcup_{\sigma\in\Sigma}\sigma$ 
is called the {\it support} of $\Sigma$. 
For $k=0,1,\ldots,n$, we denote by $\Sigma^{(k)}$  
the set of $k$-dimensional cones in $\Sigma$.  
The {\it skeleton} of a cone $\sigma\in\Sigma$ is 
the set of all of its primitive 
integer vectors 
(i.e., with components relatively prime in $\Z_+$)
in the edges of $\sigma$. 
It is clear that the skeleton of $\sigma$ 
generates $\sigma$ itself. 
Thus, the set of skeletons of the cones 
belonging to $\Sigma^{(k)}$ is also expressed 
by the same symbol $\Sigma^{(k)}$.

\subsection{Simplicial subdivision}
%

We denote by $(\R^n)^*$ the dual space of $\R^n$ 
with respect to the standard inner product. 
For $a=(a_1,\ldots,a_n)\in(\R^n)^*$, 
define
\begin{equation}
l(a)=\min\left\{
 \langle a,\alpha \rangle ; \alpha\in\Gamma_{+}(f)
\right\}
\label{eqn:4.1}
\end{equation}
and 
$\gamma(a)=
\{\alpha\in\Gamma_+(f); \langle a,\alpha \rangle =l(a)\}
(=\Gamma_+(f)\cap H(a,l(a)))$.
We introduce an equivalence relation $\sim$ 
in $(\R^n)^*$ by $a\sim a'$ 
if and only if $\gamma(a)=\gamma(a')$. 
For any $k$-dimensional face $\gamma$ of $\Gamma_+(f)$, 
there is an equivalence class $\gamma^*$ which is defined by 
$$
\gamma^*=
\{a\in (\R^n)^*;\gamma(a)=\gamma, \mbox{ and $a_j\geq 0$ 
for $j=1,\ldots,n$}\}.
$$ 
It is easy to see that the closure of 
$\gamma^*$ is an $(n-k)$-dimensional strongly convex rational 
polyhedral cone in $(\R^n)^*$. 
Moreover,  
the collection of the closures of 
$\gamma^*$ gives a fan $\Sigma_0$.  
Note that $|\Sigma_0|=\R_+^n$.


It is known 
that there exists a {\it simplicial subdivision} $\Sigma$
of $\Sigma_0$,  
that is, $\Sigma$ is a fan satisfying the following properties:
\begin{itemize}
\item 
The fans $\Sigma_0$ and $\Sigma$ have the same support; 
\item 
Each cone of $\Sigma$ lies in some cone of $\Sigma_0$; 
\item 
The skeleton of any cone belonging to $\Sigma$ can be completed 
to a base of the lattice dual to $\Z^n$.
\end{itemize}


\subsection{Construction of toric varieties}

Fix a simplicial subdivision $\Sigma$ of $\Sigma_0$. 
For $n$-dimensional cone $\sigma\in\Sigma$, 
let 
$a^1(\sigma),\ldots,a^n(\sigma)$ be the skeleton of 
$\sigma$, ordered once and for all. 
Here, we set the coordinates of the vector $a^j(\sigma)$ as 
$$
a^j(\sigma)=(a^j_1(\sigma),\ldots,a^j_n(\sigma)).
$$
With every such cone $\sigma$, we
associate a copy of $\C^n$ which is denoted by 
$\C^n(\sigma)$.
We denote by
$ 
\pi(\sigma):\C^n(\sigma) \to \C^n
$
the map defined by  
$\pi(\sigma)(y_1,\ldots,y_n)=
(x_1,\ldots,x_n)$ with
\begin{equation}
x_j= y_1^{a_j^1(\sigma)}\cdots y_n^{a_j^n(\sigma)}, \quad\quad 
j=1,\ldots,n.
\label{eqn:4.2} 
\end{equation}
Let $X_{\Sigma}$ be the union of $\C^{n}(\sigma)$ for $\sigma$
which are glued along the images of $\pi(\sigma)$. 
Indeed, for any $n$-dimensional cones $\sigma,\sigma'\in\Sigma$, 
two copies $\C^n(\sigma)$ and $\C^n(\sigma')$ can be
identified with respect to a rational mapping:
$\pi^{-1}(\sigma')\circ \pi(\sigma):
\C^n(\sigma)\to \C^n(\sigma')$ 
(i.e. $x\in\C^n(\sigma)$ and $x'\in\C^n(\sigma')$ will coalesce 
if $\pi^{-1}(\sigma')\circ\pi(\sigma):x\mapsto x'$).
Then it is known that
\begin{itemize}
\item
$X_{\Sigma}$ is an 
$n$-dimensional complex algebraic manifold; 
\item
The map 
$\pi:X_{\Sigma}\to\C^n$
defined on each $\C^n(\sigma)$ as 
$\pi(\sigma):\C^n(\sigma)\to\C^n$ is proper. 
\end{itemize}
The manifold $X_{\Sigma}$ is called the 
{\it toric variety} associated with $\Sigma$.            
The transition functions between local maps of the manifold 
$X_{\Sigma}$ are real on the real part of the manifold 
$X_{\Sigma}$
which will be denoted by $Y_{\Sigma}$. 
The restriction of the projection $\pi$ to $Y_{\Sigma}$ is also
denoted by $\pi$. 
Then we have

\begin{itemize}
\item
$Y_{\Sigma}$ is an $n$-dimensional 
real algebraic manifold;
\item
The map 
$\pi:Y_{\Sigma}\to\R^n$
defined on each $\R^n(\sigma)$ as 
$\pi(\sigma):\R^n(\sigma)\to\R^n$ is proper. 
\end{itemize}

\begin{note}
The map $\pi(\sigma)$ plays an important role 
in our analysis and   
the following kind of computation often appears: 
Let $p=(p_1,\ldots,p_n)\in \R^n$, then  
(\ref{eqn:4.2}) implies 
\begin{eqnarray*}
&&x^p=(\pi(\sigma)(y))^p=%
(y_1^{a_1^1(\sigma)}\cdots y_n^{a_1^n(\sigma)})^{p_1}\cdots
(y_1^{a_n^1(\sigma)}\cdots y_n^{a_n^n(\sigma)})^{p_n}\\
&&\quad=y_1^{ \langle a^1(\sigma),p \rangle }\cdots y_n^{ \langle a^n(\sigma),p \rangle }.
\end{eqnarray*}
\end{note}

\subsection{Resolution of singularities}

For $I\subset \{1,\ldots,n\}$, 
define the set $T_I$ in $\R^n$ by 
\begin{equation}
T_I=\{y\in\R^n; 
y_j=0 
\mbox{ for $j\in I$, }\,
y_j\neq 0 
\mbox{ for $j\not\in I$}\}.  
\label{eqn:4.3}
\end{equation}

The following proposition shows that
$\pi:Y_{\Sigma}\to\R^n$ is a real resolution of the singularity 
of the critical point of a real analytic function 
satisfying the nondegenerate property. 

\begin{proposition}[\cite{var76}, Lemma 2.13, Lemma 2.15]
Suppose that  
$f$ is a real analytic function in a neighborhood 
$U$ of the origin.    
Then we have the following.  
\begin{enumerate}
\item
There exists a real analytic function $f_{\sigma}$ 
defined on the set $\pi(\sigma)^{-1}(U)$ such that
$f_{\sigma}(0)\neq 0$ and 
\begin{equation}
 (f\circ \pi(\sigma))(y_1,\ldots,y_n)
=y_1^{l(a^1(\sigma))}\cdots y_n^{l(a^n(\sigma))} 
f_{\sigma}(y_1,\ldots,y_n).
\label{eqn:4.4}
\end{equation} 
\item 
The Jacobian of the mapping $\pi(\sigma)$ 
is equal to 
\begin{equation}
J_{\pi(\sigma)}(y)
=\pm  
y_1^{ \langle a^1(\sigma) \rangle -1}\cdots 
y_n^{ \langle a^n(\sigma) \rangle -1}.
\label{eqn:4.5}
\end{equation}
\item 
The set of the points in $\R^n$ in which $\pi(\sigma)$ 
is not an isomorphism is a union of coordinate planes. 
\end{enumerate}

Moreover,
if $f$ is nondegenerate over $\R$ 
with respect to $\Gamma_+(f)$ and    
$\pi(\sigma)(T_I)=0$,  
then the set $\{y\in T_I;f_{\sigma}(y)=0\}$ is 
nonsingular, that is, 
the gradient of the restriction of the function 
$f_{\sigma}$ to $T_I$ does not vanish at the points of the set  
$\{y\in T_I;f_{\sigma}(y)=0\}$. 
\end{proposition}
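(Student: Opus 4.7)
\medskip

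\textbf{Proof plan for Proposition 4.1.}

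For part (i), the plan is to substitute into the Taylor series of $f$. Writing $f(x)=\sum_{\alpha\in S_f}c_\alpha x^\alpha$ and using Note~4.1, each monomial pulls back to
\[
c_\alpha\,y_1^{\langle a^1(\sigma),\alpha\rangle}\cdots y_n^{\langle a^n(\sigma),\alpha\rangle}.
\]
By definition of $l(a^j(\sigma))$ in (\ref{eqn:4.1}) and the fact that $S_f\subset \Gamma_+(f)$, we have $\langle a^j(\sigma),\alpha\rangle\geq l(a^j(\sigma))$ for every $\alpha\in S_f$, so the monomial $y_1^{l(a^1(\sigma))}\cdots y_n^{l(a^n(\sigma))}$ divides every term and we may define $f_\sigma$ as the quotient; it is real analytic on $\pi(\sigma)^{-1}(U)$ because $f$ is. To see $f_\sigma(0)\neq 0$, I would observe that the interior of the $n$-dimensional cone $\sigma$ lies in the interior of some $\gamma^*$, which corresponds to a $0$-dimensional face (vertex) $\alpha_0$ of $\Gamma_+(f)$; then $\bigcap_j \gamma(a^j(\sigma))=\{\alpha_0\}$, and the only term in $f_\sigma(y)$ with zero total $y$-exponent is $c_{\alpha_0}$, which is nonzero because $\alpha_0$ is a vertex of $\Gamma_+(f)$.

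For parts (ii) and (iii), the plan is direct computation. From (\ref{eqn:4.2}) one finds $\partial x_j/\partial y_k = a_j^k(\sigma)\,x_j/y_k$, so the Jacobian matrix factors as a product of $\operatorname{diag}(x_j)$, the integer matrix $A=(a_j^k(\sigma))$, and $\operatorname{diag}(1/y_k)$. Using $\prod_j x_j=\prod_k y_k^{\langle a^k(\sigma)\rangle}$ and the fact that the skeleton $\{a^1(\sigma),\ldots,a^n(\sigma)\}$ completes to a base of the dual lattice (so $\det A=\pm 1$), the formula (\ref{eqn:4.5}) drops out. Part (iii) is then immediate: the Jacobian vanishes only on the coordinate hyperplanes $\{y_k=0\}$, and unimodularity of $A$ implies that on $(\R\setminus\{0\})^n$ the map $\pi(\sigma)$ is a (sign-respecting) monomial diffeomorphism onto its image.

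The last paragraph (nondegeneracy transfer) is the main obstacle, so I would handle it last and most carefully. Given $I\subset\{1,\ldots,n\}$ with $\pi(\sigma)(T_I)=\{0\}$, I would set $\gamma_I=\bigcap_{k\in I}\gamma(a^k(\sigma))$, which is a face of $\Gamma_+(f)$. Re-examining the computation in part (i), the terms of $f_\sigma(y)$ that survive after setting $y_k=0$ for $k\in I$ are exactly those with $\langle a^k(\sigma),\alpha\rangle=l(a^k(\sigma))$ for all $k\in I$, i.e.\ with $\alpha\in\gamma_I\cap S_f$. Hence
\[
f_\sigma(y)\big|_{T_I}\;=\;\bigl(\text{monomial in the }y_k,\;k\notin I\bigr)\cdot f_{\gamma_I}\bigl(\pi(\sigma)(y)\bigr)\big|_{T_I},
\]
after absorbing the factored monomials. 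Because the remaining variables $y_k$ ($k\notin I$) map onto an open subset of the torus orbit associated with the face $\gamma_I$ via an invertible monomial map (by the unimodularity from part (ii) applied to the appropriate sub-basis), the chain rule shows that the gradient of $f_\sigma|_{T_I}$ vanishes at a point $y^{\ast}\in T_I$ with $f_\sigma(y^{\ast})=0$ if and only if $\nabla f_{\gamma_I}$ vanishes at $\pi(\sigma)(y^{\ast})$, which lies in $\{x_1\cdots x_n\neq 0\}$ relative to the torus coordinates of $\gamma_I$. The hypothesis that $f$ is nondegenerate over $\R$ with respect to $\Gamma_+(f)$ then forbids this, giving the claim.
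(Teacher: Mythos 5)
The paper does not supply a proof of Proposition~4.1; it cites Varchenko [var76], Lemmas~2.13 and~2.15. So I assess your plan on its own terms. Parts (i)--(iii) are the standard arguments and are essentially correct; for (i) you should add that $\alpha_0$ being a vertex of $\Gamma_+(f)$ forces $\alpha_0\in S_f$ (hence $c_{\alpha_0}\neq 0$), and that $\bigcap_j\gamma(a^j(\sigma))=\{\alpha_0\}$ follows because $l$ is linear on $\sigma$ and $\gamma(a)=\{\alpha_0\}$ for $a$ interior to $\sigma$.

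The final paragraph has a genuine gap. First, the displayed identity cannot be right as written: $\pi(\sigma)(y)=0$ for every $y\in T_I$, so $f_{\gamma_I}(\pi(\sigma)(y))|_{T_I}$ is the constant $f_{\gamma_I}(0)$. The map you actually want is $\Phi(y_{I^c})_j:=\prod_{k\notin I}y_k^{a^k_j(\sigma)}$, i.e.\ $\pi(\sigma)$ with $y_k$ set to $1$ (not $0$) for $k\in I$; then $\Phi$ sends $T_I$ into $(\R\setminus\{0\})^n$ and one has $f_{\gamma_I}(\Phi(y_{I^c}))=\bigl(\prod_{k\notin I}y_k^{l(a^k(\sigma))}\bigr)f_\sigma(0,y_{I^c})$. (One should also record why $\gamma_I:=\bigcap_{k\in I}\gamma(a^k(\sigma))$ is \emph{compact}: $\pi(\sigma)(T_I)=\{0\}$ forces $\sum_{k\in I}a^k(\sigma)\in\N^n$, and $\gamma_I=\gamma\bigl(\sum_{k\in I}a^k(\sigma)\bigr)$.) Second, and more seriously, the chain rule alone does not deliver your claimed ``if and only if'': $\Phi$ maps an $(n-|I|)$-dimensional domain into $\R^n$, so from $\nabla_{y_{I^c}}f_\sigma(0,\cdot)(y^{\ast})=0$ together with $f_\sigma(0,y^{\ast})=0$ the chain rule yields only the $n-|I|$ relations $\sum_{j=1}^n a^k_j(\sigma)\,x_j\,\partial_j f_{\gamma_I}(x)=0$ for $k\notin I$, where $x=\Phi(y^{\ast}_{I^c})$. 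The missing $|I|$ relations come from the quasi-homogeneity of $f_{\gamma_I}$ with respect to each weight $a^k(\sigma)$, $k\in I$: Euler's identity gives $\sum_{j=1}^n a^k_j(\sigma)\,x_j\,\partial_j f_{\gamma_I}(x)=l(a^k(\sigma))\,f_{\gamma_I}(x)=0$ at the zero. Only after combining all $n$ relations, and using $\det(a^k_j(\sigma))=\pm1$ with $x\in(\R\setminus\{0\})^n$, can one conclude $\nabla f_{\gamma_I}(x)=0$ and hence a contradiction with nondegeneracy. This Euler-identity step is the crux of the transfer and is absent from your sketch; note that the paper itself uses exactly this device in its proof of Lemma~5.17.
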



\section{Poles of local zeta functions}

Throughout this section, 
the functions $f$, $\varphi$, $\chi$ always satisfy 
the conditions (A), (B), (C) 
in the beginning of Section 2.2. 

The purpose of this section is 
to investigate the properties of 
poles of the functions:
\begin{equation}
Z_{+}(s)=\int_{\R^n} f(x)_{+}^s \varphi(x)\chi(x)dx, \quad 
Z_{-}(s)=\int_{\R^n} f(x)_{-}^s \varphi(x)\chi(x)dx, 
\label{eqn:5.1}
\end{equation}
where $f(x)_+=\max\{f(x),0\}$ and 
$f(x)_-=\max\{-f(x),0\}$ and the {\it local zeta function}:
\begin{equation}
Z(s)=\int_{\R^n} |f(x)|^s \varphi(x)\chi(x)dx.
\label{eqn:5.2}
\end{equation}
From the properties of $Z_+(s)$ and $Z_-(s)$, 
we can easily obtain analogous properties 
of $Z(s)$  
by using the relationship: $Z(s)=Z_+(s)+Z_-(s)$.

It is easy to see that the above functions 
are holomorphic functions in the region 
${\rm Re} (s)>0$.  
Moreover, it is known (see \cite{agv88},\cite{kan81}, etc.) that 
if the support of $\chi$ is sufficiently small, 
then these functions can be analytically 
continued to the complex plane as 
meromorphic functions and their poles belong 
to finitely many arithmetic progressions 
constructed from negative rational numbers.
More precisely, Varchenko \cite{var76} describes
the positions of the candidate poles and their orders 
by using the toric resolution
constructed in Section~4. 
In this section, we give more accurate results 
in the case that $\varphi$ has a zero 
at the origin. 

\subsection{The monomial case}
First, 
let us consider the case 
that the function $\varphi$ is a monomial, i.e., 
$\varphi(x)=x^p=x_1^{p_1}\cdots x_n^{p_n}$ with 
$p=(p_1,\ldots,p_n)\in\Z_+^n$. 
Fedorjuk \cite{fed59} was the first to consider 
this kind of issue in two-dimensional case. 
Moreover, 
there have been closely related studies to ours in 
\cite{ds89},\cite{ds92},\cite{dls97},\cite{dns05}, 
which contain other interesting results.
 
\begin{theorem}
Suppose that 
{\rm (i)} 
$f$ is nondegenerate over $\R$ 
with respect to its Newton polyhedron and 
{\rm (ii)} 
$\varphi(x)=x^p$ with $p\in\Z_+^n$. 
If the support of $\chi$ is
contained in a sufficiently small neighborhood 
of the origin, 
then the poles of the functions  
$Z_{+}(s)$, $Z_-(s)$ and $Z(s)$ 
are contained in the set 
$$\left\{
-\frac{ \langle a,p+\1 \rangle +\nu}{l(a)}
;\,\, \nu\in\Z_+,\,\, a\in\tilde{\Sigma}^{(1)}
\right\}
\cup (-\N),$$
where $l(a)$ is as in $(\ref{eqn:4.1})$ and 
$\tilde{\Sigma}^{(1)}=\{a\in\Sigma^{(1)};l(a)>0\}$. 
\end{theorem}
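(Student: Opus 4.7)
The plan is to pull back the integrals defining $Z_\pm(s)$ via the toric resolution $\pi : Y_\Sigma \to \R^n$ constructed in Section~4, and to analyze the resulting expressions chart by chart via further local partitions of unity. Fix a simplicial subdivision $\Sigma$ of $\Sigma_0$ as in Section~4.2. Because $\pi$ is proper, $\pi^{-1}(\mathrm{supp}\,\chi)$ is compact, so one can construct a finite partition of unity on $Y_\Sigma$ subordinate to the atlas $\{\R^n(\sigma)\}_{\sigma\in\Sigma^{(n)}}$, further refined by octant and by small neighborhoods of individually chosen reference points. This writes each of $Z_\pm(s)$ as a finite sum of local integrals, and it suffices to show that each summand has poles only in the stated set.

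On the positive octant of a chart $\R^n(\sigma)$, Proposition~4.1 together with the Note in Section~4.3 yield $(f\circ\pi(\sigma))(y) = y_1^{l(a^1(\sigma))}\cdots y_n^{l(a^n(\sigma))}\,f_\sigma(y)$ with $f_\sigma(0)\neq 0$, $(\varphi\circ\pi(\sigma))(y) = \prod_j y_j^{\langle a^j(\sigma),p\rangle}$, and $|J_{\pi(\sigma)}(y)| = \prod_j y_j^{\langle a^j(\sigma)\rangle-1}$. Collecting powers of each $y_j$, a typical local integral takes the form
\begin{equation*}
\int \prod_{j=1}^n y_j^{\alpha_j(s)}\;(f_\sigma(y))_{\pm}^s\;\rho(y)\,dy,
\qquad
\alpha_j(s) = s\,l(a^j(\sigma)) + \langle a^j(\sigma),\,p+\1\rangle - 1,
\end{equation*}
with $\rho$ smooth and compactly supported. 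For $a^j(\sigma)\in\tilde\Sigma^{(1)}$ (i.e.\ $l(a^j(\sigma))>0$), solving $\alpha_j(s) = -1-\nu$ with $\nu\in\Z_+$ recovers exactly $s = -(\langle a^j(\sigma),p+\1\rangle + \nu)/l(a^j(\sigma))$, while for $l(a^j(\sigma))=0$ the exponent $\alpha_j$ is a fixed nonnegative integer (since $p\in\Z_+^n$ and $a^j(\sigma)$ is a nonzero primitive vector in $\Z_+^n$), contributing no poles in $s$.

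Given a reference point $y^*$ in the local support, distinguish two cases. If $f_\sigma(y^*)\neq 0$, then on a neighborhood of $y^*$ the function $(f_\sigma)_\pm^s$ is either smooth and entire in $s$ or identically zero; standard Mellin analysis of $\int \prod_j y_j^{\alpha_j(s)} g(y,s)\,dy$ with $g$ smooth in $y$ and holomorphic in $s$ then produces only the candidate poles identified above (from those $j$ with $y_j^*=0$). If $f_\sigma(y^*)=0$, shrinking $\mathrm{supp}\,\chi$ ensures $y^*$ lies close to $\pi^{-1}(0) = \bigcup_{\pi(\sigma)(T_I)=0} T_I$, so Proposition~4.1 applies in the stratum $T_I$ containing $y^*$ and supplies some $k\notin I$ with $\partial f_\sigma/\partial y_k(y^*)\neq 0$. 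Since $y_k^*\neq 0$, the inverse function theorem gives a local diffeomorphism replacing $y_k$ by $z := f_\sigma(y)$ (keeping the other coordinates fixed); under it $y_k^{\alpha_k(s)}$ becomes smooth and entire in $s$, while the new factor $z_\pm^s$ contributes the poles $s\in -\N$, and the remaining $y_j^{\alpha_j(s)}$ with $j\in I$ supply the first set of poles as before.

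The main obstacle is this second case: one must verify that the coordinate straightening $y_k\mapsto z$ does not distort the monomial structure in a way that introduces unanticipated poles. The key observations are that the Jacobian of the change is smooth and nonvanishing near $y^*$, that the coordinates $y_j$ with $j\neq k$ are left untouched, and that the potentially troublesome configuration $y^*\in T_I$ with $\pi(\sigma)(T_I)\neq 0$ is excluded from consideration by the smallness of $\mathrm{supp}\,\chi$. Assembling the contributions from the finitely many local pieces yields the asserted candidate pole set for $Z_\pm(s)$, and the identity $Z(s) = Z_+(s) + Z_-(s)$ transfers the conclusion to the local zeta function $Z(s)$.
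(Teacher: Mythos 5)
Your proposal is correct and follows essentially the same route as the paper's own proof: pull back via the toric resolution, split into chart/octant/local pieces, read off candidate poles $-(\langle a^j(\sigma),p+\1\rangle+\nu)/l(a^j(\sigma))$ from the monomial factors $y_j^{\alpha_j(s)}$ via the meromorphic-continuation lemma, and near the zero set of $f_\sigma$ invoke the nondegeneracy conclusion of Proposition~4.2 to straighten $f$ by a local coordinate change, producing the factor $z_\pm^s$ responsible for the extra poles in $-\N$. One small imprecision worth noting: it is not quite that the configuration $y^*\in T_I$ with $\pi(\sigma)(T_I)\neq 0$ is "excluded" by shrinking $\mathrm{supp}\,\chi$ (such $y^*$ can persist, e.g.\ in $T_\emptyset$); rather, shrinking the support forces any zero $y^*$ of $f_\sigma$ in the relevant region to lie arbitrarily close to the compact set $\pi(\sigma)^{-1}(0)\cap\mathrm{supp}\,\chi_\sigma$, where Proposition~4.2 yields $\nabla f_\sigma\neq 0$, and the nonvanishing then propagates to $y^*$ by continuity and compactness — which is what the paper is implicitly doing when it writes "Applying Proposition~4.2 and changing the integral variables."
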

In Remark 5.3 after the proof, 
we will explain in more detail the reason why the set 
$(-\N)$ is necessary to express the poles. 

\begin{proof}
Let $\Sigma_0$ be the fan constructed 
from the Newton polyhedron 
of $f$. 
Fix a simplicial subdivision $\Sigma$ of $\Sigma_0$ and 
let $(Y_{\Sigma},\pi)$ be the real resolution 
associated with $\Sigma$ as in Section 4. 

By using the mapping $x=\pi(y)$, 
$Z_{+}(s)$ and $Z_-(s)$ are expressed as 
\begin{eqnarray*}
&&
Z_{\pm}(s)=\int_{\R^n}f(x)_{\pm}^s x^p\chi(x)dx \\
&&
\quad 
=\int_{Y_{\Sigma}} 
((f\circ\pi)(y))_{\pm}^s (\pi(y))^p (\chi\circ\pi)(y) 
|J_{\pi}(y)|dy, 
\end{eqnarray*}
where $dy$ is a volume element in $Y_{\Sigma}$,   
$J_{\pi}(y)$ is the Jacobian of the mapping $\pi$. 
It is easy to see that 
there exists a set of $C^{\infty}_0$ functions 
$\{\chi_{\sigma}:Y_{\Sigma} \to\R_+; \sigma\in\Sigma^{(n)}\}$ 
satisfying 
the following properties:
\begin{itemize}
\item 
For each $\sigma\in\Sigma^{(n)}$, 
the support of the function $\chi_{\sigma}$ is contained 
in $\R^n(\sigma)$ and 
$\chi_{\sigma}$ identically equals one 
in some neighborhood of the origin. 
\item 
$\sum_{\sigma\in\Sigma^{(n)}}\chi_{\sigma}\equiv 1$ 
on the support of 
$\chi\circ\pi$.  
\end{itemize}
Applying Proposition~4.2, we have 
$$
Z_{\pm}(s)=\sum_{\sigma\in\Sigma^{(n)}} 
Z_{\pm}^{(\sigma)}(s)
$$ 
with  
\begin{equation}
\begin{split}
&Z_{\pm}^{(\sigma)}(s)
=\int_{\R^n} ((f\circ\pi(\sigma))(y))_{\pm}^s 
(\pi(\sigma)(y))^p(\chi\circ\pi(\sigma))(y) 
\chi_{\sigma}(y)|J_{\pi(\sigma)}(y)|dy \\
&
\quad 
=\int_{\R^n} 
\left(
\prod_{j=1}^n y_j^{l(a^j(\sigma))}f_{\sigma}(y)
\right)_{\pm}^s 
\left(
\prod_{j=1}^n y_j^{\langle a^j(\sigma),p \rangle}
\right) 
\left|
\prod_{j=1}^n y_j^{ \langle a^j(\sigma) \rangle -1}
\right|\tilde{\chi}_{\sigma}(y)dy, 
\end{split}
\label{eqn:5.3}
\end{equation}
where 
$\tilde{\chi}_{\sigma}(y)=
(\chi\circ\pi(\sigma))(y)\chi_{\sigma}(y)$.

Now, consider the functions $Z^{(\sigma)}_{\pm}(s)$ 
for $\sigma\in\Sigma^{(n)}$. 
We easily see the existence of finite sets of 
$C^{\infty}_0$ functions 
$\{\psi_k:\R^n\to\R_+\}$ and 
$\{\eta_l:\R^n\to\R_+\}$ satisfying the following conditions. 
\begin{itemize}
\item 
The supports of $\psi_k$ and $\eta_l$ are sufficiently small and 
$\sum_k \psi_k + \sum_l \eta_l \equiv 1$ 
on the support of $\tilde{\chi}_{\sigma}$.
\item 
For each $k$, 
$f_{\sigma}$ is always positive or negative on the support of $\psi_k$. 
\item 
For each $l$, the support of $\eta_l$ intersects the set 
$\{y\in {\rm Supp}(\tilde{\chi}_{\sigma});f_{\sigma}(y)=0\}$
\item
The union of the support of $\eta_l$ for all $l$ contains the set 
$\{y\in {\rm Supp}(\tilde{\chi}_{\sigma});f_{\sigma}(y)=0\}$
\end{itemize}

Using the functions $\psi_k$ and $\eta_l$, we have
\begin{equation}
Z^{(\sigma)}_{\pm}(s)=
\sum_k I^{(k)}_{\sigma,\pm}(s)+
\sum_l J^{(l)}_{\sigma,\pm}(s),
\label{eqn:5.4}
\end{equation} 
with
\begin{eqnarray*}
&&
I^{(k)}_{\sigma,\pm}(s)=\int_{\R^n} 
\left(
\prod_{j=1}^n 
y_j^{l(a^j(\sigma))}f_{\sigma}(y)
\right)_{\pm}^s 
\left(
\prod_{j=1}^n y_j^{\langle a^j(\sigma),p \rangle}
\right) 
\left|
\prod_{j=1}^n y_j^{ \langle a^j(\sigma) \rangle -1}
\right|\tilde{\psi}_k(y)dy, 
\\
&&
J^{(l)}_{\sigma,\pm}(s)
=\int_{\R^n} 
\left(
\prod_{j=1}^n y_j^{l(a^j(\sigma))}f_{\sigma}(y)
\right)_{\pm}^s 
\left(
\prod_{j=1}^n y_j^{\langle a^j(\sigma),p \rangle}
\right) 
\left|
\prod_{j=1}^n y_j^{ \langle a^j(\sigma) \rangle -1}
\right|\tilde{\eta}_l(y)dy, 
\end{eqnarray*}
where $\tilde{\psi}_k(y)=\tilde{\chi}_{\sigma}(y)\psi_k(y)$ and 
$\tilde{\eta}_l(y)=\tilde{\chi}_{\sigma}(y)\eta_l(y)$. 
If the set 
$\{y\in {\rm Supp}(\tilde{\chi}_{\sigma});f_{\sigma}(y)=0\}$
is empty, then the functions $J^{(l)}_{\sigma,\pm}(s)$ 
do not appear. 

First, 
consider the functions $I^{(k)}_{\sigma,\pm}(s)$. 
Set $\delta(+)=0$ and $\delta(-)=1$. 
For $\epsilon=(\epsilon_1,\ldots,\epsilon_n)\in\{+,-\}^n$, 
let $\delta(\epsilon)=
(\delta(\epsilon_1),\ldots,\delta(\epsilon_n))\in\{0,1\}^n$.
A straightforward computation gives
\begin{equation}
I^{(k)}_{\sigma,\pm}(s)=
\sum_{\epsilon\in E(\pm \a_k, \sigma)}
I^{(\epsilon)}_{\sigma,k}(s) 
\label{eqn:Ikspm}
\end{equation}
with 
\begin{equation*}
\begin{split}
I^{(\epsilon)}_{\sigma,k}(s)=
\int_{\R^n} 
\left(
\prod_{j=1}^n 
(y_j)_{\epsilon_j}^{l(a^j(\sigma))s}
\right)
\left(
\prod_{j=1}^n y_j^{\langle a^j(\sigma),p \rangle}
\right) 
\left|
\prod_{j=1}^n y_j^{ \langle a^j(\sigma) \rangle -1}
\right|
|f_{\sigma}(y)|^s
\tilde{\psi}_k(y)dy, 
\end{split}
\label{eqn:}
\end{equation*}
where 
$\alpha_k$ is the sign of $f_{\sigma}$ 
on the support of $\tilde{\psi}_k$ and 
\begin{equation*}
\begin{split}
&E(+,\sigma) \mbox{ (resp. $E(-,\sigma)$)} \\
&\quad\quad
=\left\{
\epsilon\in\{+,-\}^n; 
\sum_{j=1}^n l(a^j(\sigma))\delta(\epsilon_j)
\mbox{ is even (resp. odd)} 
\right\}. 
\end{split}
\end{equation*}
We remark that $E(+,\sigma)\cup E(-,\sigma)=\{+,-\}^n$ 
and that $E(-,\sigma)$ is possibly empty. 
Moreover, we have 
\begin{equation}
I_{\sigma,k}^{(\epsilon)}(s)
=
(-1)^{g_{\sigma,p}(\epsilon)}
\int_{\R^n}
\left(
\prod_{j=1}^n 
(y_j)_{\epsilon_j}^
{l(a^j(\sigma))s+\langle a^j(\sigma),p+\1\rangle-1}
\right)
|f_{\sigma}(y)|^s\tilde{\psi}_{k}(y)dy,
\label{eqn:5.5}
\end{equation}
where 
\begin{equation}
g_{\sigma,p}(\epsilon)=
\sum_{i,j=1}^n
\delta(\epsilon_j) \cdot a_i^j(\sigma) \cdot p_i.
\label{eqn:gsp}
\end{equation}

The following lemma is useful for analyzing 
the poles of integrals of the above form. 
\begin{lemma}[\cite{gs64},\cite{agv88}]
Let $\psi(y_1,\ldots,y_n;\mu)$ be a $C^{\infty}_0$ function 
of $y$ on $\R^n$ that is an entire function 
of the parameter $\mu\in\C$. 
Then the function 
$$
L(\tau_1,\ldots,\tau_n;\mu)=\int_{\R^n}
\left(\prod_{j=1}^n (y_j)_{\epsilon_j}^{\tau_j}\right)
\psi(y_1,\ldots,y_n;\mu) dy_1\cdots dy_n, 
$$
where $\epsilon_j$ is $+$ or $-$, 
can be analytically continued at all the values of 
$\tau_1,\ldots,\tau_n$ and $\mu$ as a meromorphic function. 
Moreover all its poles are simple and lie on 
$\tau_j=-1,-2,\ldots$ for $j=1,\ldots,n$. 
\end{lemma}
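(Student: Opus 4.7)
The plan is to apply the classical Gelfand--Shilov subtraction method and induct on the number of variables $n$. By splitting $\R^n$ along coordinate hyperplanes and substituting $y_j\mapsto -y_j$ on the negative half-lines, the problem reduces to studying
$$L_+(\tau;\mu)=\int_{[0,\infty)^n}\prod_{j=1}^n y_j^{\tau_j}\,\tilde\psi(y;\mu)\,dy,$$
where $\tilde\psi$ is $C^\infty_0$ in $y$ and entire in $\mu$. On the tube $\{\mathrm{Re}\,\tau_j>-1\text{ for all }j\}$ this integral converges absolutely and depends holomorphically on $(\tau,\mu)$, providing the starting region of the continuation.

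To extend in $\tau_1$, I would fix a smooth cutoff $\chi_1(y_1)$ equal to $1$ on $[0,a]$ and supported in $[0,2a]$, and for any integer $N\geq 1$ write
$$\tilde\psi(y;\mu)=\sum_{k=0}^{N-1}\frac{y_1^k}{k!}\psi_k(y';\mu)\chi_1(y_1)+R_N(y;\mu),$$
with $y'=(y_2,\ldots,y_n)$, $\psi_k(y';\mu)=\partial_{y_1}^k\tilde\psi(0,y';\mu)$, and remainder satisfying $|R_N(y;\mu)|=O(y_1^N)$ uniformly on $\mu$-compacta. The $R_N$-contribution is then holomorphic in $\tau_1$ on $\{\mathrm{Re}\,\tau_1>-N-1\}$, while the $k$-th Taylor term factorizes as the product of the explicit one-variable integral
$$\int_0^\infty y_1^{\tau_1+k}\chi_1(y_1)\,dy_1=\frac{a^{\tau_1+k+1}}{\tau_1+k+1}+\int_a^{2a}y_1^{\tau_1+k}\chi_1(y_1)\,dy_1,$$
which is manifestly meromorphic in $\tau_1$ with a single simple pole at $\tau_1=-k-1$, and of the $(n-1)$-dimensional integral $\frac{1}{k!}\int_{[0,\infty)^{n-1}}\prod_{j\geq 2}y_j^{\tau_j}\,\psi_k(y';\mu)\,dy'$. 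Sending $N\to\infty$ yields meromorphic continuation in $\tau_1$ with simple poles only at $\tau_1\in\{-1,-2,\ldots\}$.

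Iterating this step on $\tau_2,\ldots,\tau_n$, using that each $\psi_k$ is again $C^\infty_0$ in $y'$ and entire in $\mu$, produces joint meromorphic continuation to $\C^{n+1}$ with pole hyperplanes only $\tau_j\in\{-1,-2,\ldots\}$. The main obstacle is confirming that the poles remain simple: a priori the iterated subtractions could generate higher-order poles along intersections such as $\{\tau_1=-k_1\}\cap\{\tau_2=-k_2\}$. However, in the factorized form above the pole in each coordinate direction arises from an explicit simple one-dimensional factor, while the residual integral depends on the remaining variables through the induction hypothesis, with its own simple-pole hyperplanes sitting in different coordinates. Since these hyperplanes are transversal, the order of pole along any single hyperplane $\tau_j=-k$ remains one. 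Entirety in $\mu$ is preserved throughout because $\mu$ enters only through the coefficient functions $\psi_k(y';\mu)$ and never participates in the pole-producing factors.
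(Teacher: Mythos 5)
Your proof is correct and is the classical Gelfand--Shilov subtraction (``canonical regularization'') argument. The paper's own proof is a one-liner that appeals to \emph{integration by parts} and cites the same references, so you are in fact taking a genuinely different route to the same classical conclusion. The integration-by-parts method writes
$$
\int_0^\infty y_1^{\tau_1}\phi(y_1)\,dy_1
=\frac{(-1)^N}{(\tau_1+1)\cdots(\tau_1+N)}\int_0^\infty y_1^{\tau_1+N}\phi^{(N)}(y_1)\,dy_1,
$$
which has the advantage that the pole-producing factors $1/(\tau_1+k)$ appear explicitly and separately, making the simple-pole claim immediate at each step and trivially stable under iteration in the remaining variables. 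Your subtraction approach buys the same result, but forces you to address exactly the subtlety you flag: one must argue that the sum over Taylor terms $\sum_k M_k(\tau_1)L_k(\tau_2,\ldots,\tau_n;\mu)$ cannot upgrade the order of a pole along any single hyperplane $\tau_j=-m$. Your resolution is right --- the $\tau_1$-pole of index $k$ comes from the unique factor $M_k$, and for $j\geq 2$ one is summing finitely many functions each with at most a simple pole along $\tau_j=-m$, which yields at most a simple pole --- but this is precisely the place where integration by parts would have saved you the bookkeeping. Both arguments also preserve entirety in $\mu$ for the same reason: $\mu$ only enters through the smooth coefficient functions (for you, the $\psi_k(y';\mu)$; for integration by parts, the $\mu$-derivatives of $\psi$), never through the pole-producing rational factors. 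Either proof is acceptable; the paper's citation to \cite{gs64},\cite{agv88} covers both.
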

\begin{proof}
This is easily obtained by the integration by parts 
(see \cite{gs64},\cite{agv88}).
\end{proof}
Applying Lemma~5.2 to (\ref{eqn:5.5}), 
we see that  
the poles of $I_{\sigma,k}^{(\epsilon)}(s)$ 
are contained in 
the set 
\begin{equation}
\left\{
-\frac{ \langle a^j(\sigma),p+\1 \rangle +\nu}{l(a^j(\sigma))};
\nu\in\Z_+, j\in B({\sigma})
\right\},
\label{eqn:5.6}
\end{equation}
where 
\begin{equation}
B(\sigma):=\{j;l(a^j(\sigma))\neq 0\}
\subset\{1,\ldots,n\}.
\label{eqn:5.7}
\end{equation}

Next, consider the functions $J^{(l)}_{\sigma,\pm}(s)$.  
Applying Proposition~4.2 
and changing the integral variables, 
we have 
\begin{equation}
J^{(l)}_{\sigma,\pm}(s)=
\int_{\R^n} 
\left(
y_k
\prod_{j \in B_l(\sigma)} y_j^{l(a^j(\sigma))}
\right)^s_{\pm} 
\left(
\prod_{j \in B_l(\sigma)} y_j^{\langle a^j(\sigma),p \rangle}
\right) 
\left|
\prod_{j \in B_l(\sigma)} y_j^{ \langle a^j(\sigma) \rangle -1}
\right|\hat{\eta}_l(y)dy, 
\label{eqn:5.71}
\end{equation}
where 
$B_l(\sigma)$ is some subset in $\{1,\ldots,n\}$ 
(with $B_l(\sigma)\neq\{1,\ldots,n\}$), 
$k\in\{1,\ldots,n\}\setminus B_l(\sigma)$ and 
$\hat{\eta}_l\in C^{\infty}_0(\R^n)$ 
with $\hat{\eta}_l(0)\neq 0$.
In a similar fashion to the case of 
$I_{\sigma,\pm}^{(k)}(s)$, 
we have
$$
J_{\sigma,\pm}^{(l)}(s)=
\sum_{\epsilon\in \tilde{E}(\pm,\sigma)}
J_{\sigma,l}^{(\epsilon)}(s), 
$$
with
\begin{equation}
J_{\sigma,l}^{(\epsilon)}(s)
=
(-1)^{\tilde{g}_{\sigma,p}(\tilde{\epsilon})}
\int_{\R^n}
\left(
(y_k)_{\epsilon_k}^s
\prod_{j\in B_l(\sigma)}
(y_j)_{\epsilon_j}^
{l(a^j(\sigma))s+\langle a^j(\sigma),p+\1\rangle-1}
\right)
\hat{\eta}_{l}(y)dy,
\label{eqn:5.8}
\end{equation}
where $\epsilon=(\epsilon_k,\tilde{\epsilon})$ with 
$\tilde{\epsilon}=(\epsilon_j)_{j\in B_l}$, 
$\tilde{g}_{\sigma,p}(\tilde{\epsilon})=
\sum_{j\in B_l(\sigma)}\sum_{i=1}^n
\delta(\epsilon_j) \cdot a_i^j(\sigma) \cdot p_i$ 
and
\begin{equation*}
\begin{split}
&\tilde{E}(+,\sigma) \mbox{ (resp. $\tilde{E}(-,\sigma)$)} 
\\
&\quad\quad
=\left\{
\epsilon=(\epsilon_k,\tilde{\epsilon}); 
\delta(\epsilon_k)+\sum_{j\in B_l(\sigma)} 
l(a^j(\sigma))\delta(\epsilon_j) 
\mbox{ is even (resp. odd)} 
\right\}. 
\end{split}
\end{equation*}
We remark that 
$\card\tilde{E}(+,\sigma)
=\card\tilde{E}(-,\sigma)$ and, 
in particular, 
both $\tilde{E}(+,\sigma)$ and 
$\tilde{E}(-,\sigma)$ are nonempty. 

By applying Lemma~5.2 to (\ref{eqn:5.8}), 
the poles of 
$J_{\sigma,l}^{(\epsilon)}(s)$
are contained in 
the set  
\begin{equation}
\left\{
-\frac{ \langle a^j(\sigma),p+\1 \rangle +\nu}{l(a^j(\sigma))};
\nu\in\Z_+, j\in \tilde{B}_l(\sigma) 
\right\}\cup(-\N), 
\label{eqn:5.9}
\end{equation}
where $\tilde{B}_l(\sigma)=
\{j\in B_l(\sigma);l(a^j(\sigma))\neq 0\}$.

Finally, the union of the sets (\ref{eqn:5.6})
and (\ref{eqn:5.9}) for all $\sigma$, 
$\epsilon$
equals the set in the theorem. 
It is easy to show the case of $Z(s)$
by using the relationship:
$Z(s)=Z_+(s)+Z_-(s)$.
\end{proof} 
\begin{remark}
We explain in more detail the reason why 
the set $(-\N)$ in (\ref{eqn:5.9}) is necessary 
to express the poles of 
$J_{\sigma,l}^{(\epsilon)}(s)$, 
namely, each $J_{\sigma,l}^{(\epsilon)}(s)$
possibly has a pole on $(-\N)$.  
From Proposition 4.2, 
the nondegenerate condition of $f$ implies that 
$f_{\sigma}$ is nonsingular at the zero set 
of $f_{\sigma}$. By choosing an appropriate
coordinate system near the zero set of $f_{\sigma}$, 
$f$ can be locally expressed 
by $y_k(\prod_{j\in B_l(\sigma)}y_j^{l(a^j(\sigma))})$
as in (\ref{eqn:5.71}). 
Moreover, 
the existence of  $(y_k)^s_{\epsilon_k}$ in (\ref{eqn:5.8})
induces the poles on $(-\N)$  
by Lemma 5.2.
\end{remark}

For $p\in\Z_+^n$, we define
\begin{equation}
\beta(p)=\max\left\{
-\dfrac{ \langle a,p+\1 \rangle }{l(a)}; 
a\in\tilde{\Sigma}^{(1)} 
\right\}.
\label{eqn:5.10}
\end{equation}
If $s=\beta(p)$ is a pole of $Z_{\pm}(s)$, $Z(s)$,
then we denote by $\eta_{\pm}(p)$, $\hat{\eta}(p)$ 
the order of its pole, respectively.
For $\sigma\in\Sigma^{(n)}$, let  
$$
A_p(\sigma)=\left\{
j\in B(\sigma);
\beta(p)=-\frac{ \langle a^j(\sigma),p+\1 \rangle }{l(a^j(\sigma))} 
\right\}\subset\{1,\ldots,n\}.
$$

The following proposition shows the relationship between 
``the values of 
$\beta(p)$, $\eta_{\pm}(p)$, $\hat{\eta}(p)$'' and 
``the geometrical conditions of 
$\Gamma_+(f)$ and the point $p$''.
\begin{proposition}
Let $q=(q_1,\ldots,q_n)$ be the point of the 
intersection of $\partial\Gamma_+(f)$ 
with the line joining the origin 
and the point $p+\1=(p_1+1,\ldots,p_n+1)$. 
Then 
\begin{eqnarray*}
&& 
-\beta(p)=\frac{p_1+1}{q_1}=\cdots=\frac{p_n+1}{q_n}
=\frac{ \langle p \rangle +n}{ \langle q \rangle }=\frac{1}{d(f,x^p)},\\
&&
\eta_{\pm}(p),\hat{\eta}(p)\leq
\begin{cases}
\rho_f(q)& \quad \mbox{if $1/d(f,x^p)$ 
is not an integer}, \\
\min\{\rho_f(q)+1, n\}&
\quad \mbox{otherwise},
\end{cases}
\end{eqnarray*}
where $\rho_f$ and $d(\cdot,\cdot)$ are as 
in Section~$2.1$.
Note that 
$m(f,x^p)=\rho_f(q)=\rho_f(d(f,x^p)(p+\1))$.
\end{proposition}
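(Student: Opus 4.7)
The plan is to handle the two assertions separately, exploiting the pole decomposition already set up in the proof of Theorem~5.1. For the first chain of equalities, note that $q=d(f,x^p)(p+\1)$ lies on $\partial\Gamma_+(f)$ by the very definition of $d(f,x^p)$ for $\varphi=x^p$, so $q\in\Gamma_+(f)$ and, for every $a\in\tilde{\Sigma}^{(1)}$,
$$\frac{\langle a,p+\1\rangle}{l(a)}
=\frac{1}{d(f,x^p)}\cdot\frac{\langle a,q\rangle}{l(a)}\geq\frac{1}{d(f,x^p)},$$
with equality if and only if $q\in\gamma(a)$. This gives $-\beta(p)\geq 1/d(f,x^p)$. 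Equality is achieved by taking any facet $\gamma$ of $\Gamma_+(f)$ through $q$: its primitive outward normal $a$ lies in $\Sigma_0^{(1)}\subset\Sigma^{(1)}$, and since $q$ has strictly positive coordinates while $a\in\R_+^n$ is nonzero, $l(a)=\langle a,q\rangle>0$, so $a\in\tilde{\Sigma}^{(1)}$. The remaining equalities are immediate: $q_i=d(f,x^p)(p_i+1)$ directly yields $(p_i+1)/q_i=1/d(f,x^p)$, and summing numerators and denominators gives $(\langle p\rangle+n)/\langle q\rangle$.

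For the multiplicity bound, I would re-examine the integrals $I^{(\epsilon)}_{\sigma,k}(s)$ in (\ref{eqn:5.5}) and $J^{(\epsilon)}_{\sigma,l}(s)$ in (\ref{eqn:5.8}) at $s_0=\beta(p)$. By Lemma~5.2, an index $j\in B(\sigma)$ contributes a simple pole to $I^{(\epsilon)}_{\sigma,k}$ at $s_0$ precisely when $l(a^j(\sigma))s_0+\langle a^j(\sigma),p+\1\rangle\in\{0,-1,-2,\ldots\}$, i.e.\ when the integer $\nu:=l(a^j(\sigma))/d(f,x^p)-\langle a^j(\sigma),p+\1\rangle$ is nonnegative. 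Multiplying by $d(f,x^p)>0$ gives $\nu\cdot d(f,x^p)=l(a^j(\sigma))-\langle a^j(\sigma),q\rangle\leq 0$, forcing $\nu=0$ and $j\in A_p(\sigma)$. Hence the order of the pole of $I^{(\epsilon)}_{\sigma,k}$ at $\beta(p)$ is at most $|A_p(\sigma)|$.

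The key geometric step, which I expect to be the main obstacle, is the bound $|A_p(\sigma)|\leq \rho_f(q)$. Let $F$ be the face of $\Gamma_+(f)$ whose relative interior contains $q$, so $\rho_f(q)=n-\dim F$. The condition $q\in\gamma(a^j(\sigma))$ is equivalent to $\gamma(a^j(\sigma))\supseteq F$, which in turn is equivalent to $a^j(\sigma)$ lying in the closure of the dual cone $F^*$; this closure has dimension $n-\dim F=\rho_f(q)$. Since $\sigma$ is simplicial of dimension $n$, its skeleton $\{a^1(\sigma),\ldots,a^n(\sigma)\}$ is linearly independent, so at most $\rho_f(q)$ of these vectors can lie in the $\rho_f(q)$-dimensional subspace spanned by $\overline{F^*}$. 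This proves the required inequality.

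For the $J^{(\epsilon)}_{\sigma,l}$ contribution the same analysis applies to the factors indexed by $j\in B_l(\sigma)$, yielding at most $|A_p(\sigma)\cap B_l(\sigma)|\leq\min\{\rho_f(q),n-1\}$ simple poles from those factors. The additional factor $(y_k)_{\epsilon_k}^s$ in (\ref{eqn:5.8}) contributes a further simple pole at $s_0=\beta(p)$ exactly when $\beta(p)\in-\N$, i.e.\ when $1/d(f,x^p)$ is a positive integer. Summing, the pole order of any $J^{(\epsilon)}_{\sigma,l}$ at $\beta(p)$ is bounded by $\rho_f(q)$ in the non-integer case and by $\min\{\rho_f(q)+1,n\}$ in the integer case. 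Taking the maximum over all $I^{(\epsilon)}_{\sigma,k}$ and $J^{(\epsilon)}_{\sigma,l}$ contributions (and using $Z(s)=Z_+(s)+Z_-(s)$) gives the stated bounds on $\eta_{\pm}(p)$ and $\hat{\eta}(p)$.
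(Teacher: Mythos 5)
Your proof is correct and follows essentially the same route as the paper: reduce to the decomposition into $I^{(\epsilon)}_{\sigma,k}$ and $J^{(\epsilon)}_{\sigma,l}$ from the proof of Theorem~5.1, pin down via Lemma~5.2 which indices $j$ produce a pole at $\beta(p)$ (exactly those in $A_p(\sigma)$, plus possibly the extra $y_k$-factor when $\beta(p)\in-\N$), and then bound $\card A_p(\sigma)$ by $\rho_f(q)$ using linear independence of the skeleton of the simplicial cone $\sigma$. The one place you diverge is that you only establish the inequality $\card A_p(\sigma)\le\rho_f(q)$, whereas the paper proves the equality $\rho_f(q)=\max_{\sigma\in\Sigma^{(n)}}\card A_p(\sigma)$ by also exhibiting a cone $\hat\sigma$ that attains the maximum (taking a simplicial cone of $\Sigma$ with a $\rho_f(q)$-dimensional face inside the closure of $\gamma^*$). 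For the upper bounds asserted in the proposition your inequality suffices, but the attainment is used again in the proof of Theorem~5.7 to show that $\Sigma_p^{(n)}$ is nonempty and the leading pole actually occurs, so it is worth keeping in mind. You also do not explicitly record the final note $m(f,x^p)=\rho_f(q)=\rho_f(d(f,x^p)(p+\1))$, though this follows immediately from $\Gamma(x^p,f)=\Gamma_0=\{p\}$ and the definition of $m(\cdot,\cdot)$.
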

\begin{remark}
In the case when $n=2$ or $3$, 
$\rho_f(q)$ is equal to $\min\{\hat{m}_p,n\}$, where  
$\hat{m}_p$ is the number of the $(n-1)$-dimensional 
faces of $\Gamma_+(f)$ containing the point $q$. 
This, however, does not generally hold for
$n\geq 4$. 
\end{remark}

\begin{proof}

For $a \in \Sigma^{(1)}$, 
we denote by $q(a)$ 
the point of the intersection of the hyperplane 
$H(a,l(a))$ with the line $\{t\cdot (p+\1);t\in\R\}$, 
where $H(\cdot,\cdot)$ is as in (\ref{eqn:3.1}). 
Then it is easy to see 
\begin{equation}
q(a)=\frac{l(a)}{ \langle a,p+\1 \rangle }\cdot (p+\1). 
\label{eqn:5.11}
\end{equation}
From (\ref{eqn:5.11}),  
the condition that 
$-\dfrac{ \langle a,p+\1 \rangle }{l(a)}$ takes the maximum
is equivalent to 
the geometrical condition that 
$q(a)$ is as far as possible from the origin. 
To be more precise, we have the following equivalences: 
For $a\in\tilde{\Sigma}^{(1)}$,  
\begin{equation}
\beta(p)=-\frac{ \langle a,p+\1 \rangle }{l(a)} \, \Longleftrightarrow \,
q=q(a) \, \Longleftrightarrow \,
q\in H(a,l(a)). 
\label{eqn:5.12}
\end{equation}
From (\ref{eqn:5.11}) and (\ref{eqn:5.12}), we have
$-\beta(p)=(p_1+1)/q_1=
\cdots=(p_n+1)/q_n=( \langle p \rangle +n)/ \langle q \rangle $.
From the definition of $d(\cdot,\cdot)$, 
the above value equals $1/d(f,x^p)$. 
Note that $q=d(f,x^p)(p+\1)$.

Next, consider the orders of the poles
of $Z_{\pm}(s), Z(s)$ at $s=\beta(p)$.
From the proof of Theorem~5.1, 
it suffices to analyze the poles of 
$I_{\sigma,k}^{(\epsilon)}(s)$,
$J_{\sigma,l}^{(\epsilon)}(s)$. 
Applying Lemma~5.2 to the integrals 
(\ref{eqn:5.5}),(\ref{eqn:5.8}), 
we see the upper bounds of orders of the poles 
at $s=\beta(p)$ 
of these functions as follows.
\begin{center}
\begin{tabular}{l|l} \hline
\textit{$I_{\sigma,k}^{(\epsilon)}(s)$} & 
       \quad $\card A_p(\sigma)$ \\ \hline
\textit{$J_{\sigma,l}^{(\epsilon)}(s)$} & 
       $\min\{\card A_p(\sigma),n-1\}$ if $\beta(p)\not\in (-\N)$\\
             & $\min\{\card A_p(\sigma)+1,n\}$ if $\beta(p) \in (-\N)$ 
\\ \hline
\end{tabular}
\end{center}
From these estimates of orders, 
in order to obtain the estimates in 
the proposition, it suffices to 
show 
$\rho_f(q)=
\max\left\{\card A_p(\sigma)
;\sigma\in\Sigma^{(n)}
\right\}.$
From the definition of $A_p(\sigma)$
and (\ref{eqn:5.12}), we have 
\begin{eqnarray*}
&&\card A_p(\sigma)
= \card\{j ; q\in H(a^j(\sigma),l(a^j(\sigma)))\} \\
&& \quad=\card\{j ; 
\gamma\subset H(a^j(\sigma),l(a^j(\sigma)))\}, 
\end{eqnarray*}
where $\gamma$ is the face of $\Gamma_+(f)$ whose relative interior
contains the point $q$ 
(i.e., $q\in(\gamma\setminus\partial\gamma)$.)
From the definition of $\rho_f$, 
the codimension of $\gamma$ is $\rho_f(q)$. 
Since $a^1(\sigma),\ldots,a^n(\sigma)$ are linearly independent 
for each $\sigma\in\Sigma^{(n)}$, 
$\card\{j ; \gamma\subset H(a^j(\sigma),l(a^j(\sigma)))\}$ 
is not larger than $\rho_f(q)$ for any $\sigma\in\Sigma^{(n)}$. 
On the other hand, the closure of $\gamma^*$ is a cone belonging to
the fan $\Sigma_0$ constructed from $\Gamma_+(f)$ 
(see Section~4.2) and the dimension of this cone is $\rho_f(q)$. 
There exists an $n$-dimensional cone 
$\hat{\sigma}$ in a simplicial subdivision 
$\Sigma$ of $\Sigma_0$ whose $\rho_f(q)$-dimensional face 
is contained in the closure of $\gamma^*$. 
This means 
$\card\{j ; \gamma\subset H(a^j(\hat{\sigma}),l(a^j(\hat{\sigma})))\}
=\rho_f(q)$. 
Hence, we see $\rho_f(q)=
\max\left\{\card A_p(\sigma)
;\sigma\in\Sigma^{(n)}
\right\}.$

Lastly, it follows from $\Gamma(x^p,f)=\Gamma_0=\{p\}$ that
$m(f,x^p)=\rho_f(d(f,x^p)(p+\1))=\rho_f(q)$.
\end{proof}

Next, let us consider the coefficients of the Laurent
expansions of $Z_{+}(s)$ and $Z_-(s)$ at the poles. 
The following lemma is useful 
for computing the coefficients explicitly. 
\begin{lemma}
Let $\psi$ be a $C^{\infty}$ function on $\R$ and $k\in\N$. 
Then 
$$
\lim_{s\to -k} (s+k)
\int_{-\infty}^{\infty}
y_{\pm}^s \psi(y)dy 
=\frac{(\pm 1)^{k-1}}{(k-1)!} \psi^{(k-1)}(0). 
$$
In particular, we have
$$
\lim_{s\to -1} (s+1)
\int_{-\infty}^{\infty}
y_{\pm}^s \psi(y)dy 
=\psi(0). 
$$
\end{lemma}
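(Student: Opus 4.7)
My plan is to treat the two signs separately and obtain each from iterated integration by parts. It suffices to handle the $+$ case, since the $-$ case follows by the substitution $y \mapsto -y$, which turns $\psi$ into $\tilde{\psi}(y) := \psi(-y)$ and contributes an extra factor $(-1)^{k-1}$ through $\tilde{\psi}^{(k-1)}(0) = (-1)^{k-1}\psi^{(k-1)}(0)$. (Throughout, $\psi$ should be understood as having compact support, as is the case in every application of this lemma in Section~5; equivalently, one first multiplies $\psi$ by a cutoff supported away from $\infty$, which does not affect the residue at $s=-k$ because away from $0$ the integrand is entire in $s$.)

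First I would verify, by $k$-fold integration by parts on $(0,\infty)$, the identity
\begin{equation*}
\int_0^\infty y^s \psi(y)\,dy
= \frac{(-1)^k}{(s+1)(s+2)\cdots(s+k)}\int_0^\infty y^{s+k}\psi^{(k)}(y)\,dy,
\end{equation*}
initially valid for $\operatorname{Re}(s)>-1$ but then providing the meromorphic continuation to $\operatorname{Re}(s)>-k-1$. The boundary terms at $y=0$ all vanish because for each intermediate integration by parts the exponent of $y$ is $s+j$ with $\operatorname{Re}(s+j)>0$ in the half-plane of initial convergence, and the boundary terms at $y=\infty$ vanish by compact support.

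Next I would read off the residue at $s=-k$. The simple pole of the prefactor comes from the factor $(s+k)$, and
\begin{equation*}
\lim_{s\to -k}(s+k)\cdot \frac{(-1)^k}{(s+1)\cdots(s+k)}
= \frac{(-1)^k}{(-k+1)(-k+2)\cdots(-1)}
= \frac{(-1)^k}{(-1)^{k-1}(k-1)!}
= \frac{-1}{(k-1)!}.
\end{equation*}
Meanwhile $\int_0^\infty y^{s+k}\psi^{(k)}(y)\,dy$ is holomorphic at $s=-k$ and equals $\int_0^\infty \psi^{(k)}(y)\,dy = -\psi^{(k-1)}(0)$ there, again by compact support. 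Combining these gives
\begin{equation*}
\lim_{s\to -k}(s+k)\int_0^\infty y^s\psi(y)\,dy = \frac{\psi^{(k-1)}(0)}{(k-1)!},
\end{equation*}
which is the $+$ case. Applying this to $\tilde{\psi}$ yields the $-$ case and completes the proof; specializing to $k=1$ gives the stated corollary. The only place that requires any care is the bookkeeping of signs in the product $(-k+1)\cdots(-1)=(-1)^{k-1}(k-1)!$, but this is entirely mechanical, so there is no real obstacle.
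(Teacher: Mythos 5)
Your proof is correct and takes the same route as the paper, which simply invokes integration by parts; you have just filled in the details. The $k$-fold integration by parts identity, the residue computation $\frac{(-1)^k}{(-k+1)\cdots(-1)} = \frac{-1}{(k-1)!}$, the evaluation $\int_0^\infty \psi^{(k)} = -\psi^{(k-1)}(0)$, and the reduction of the $-$ case to the $+$ case via $y\mapsto -y$ are all right, and you were also right to flag the implicit compact-support (or equivalent) assumption needed for convergence, which holds in every application in Section 5.
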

\begin{proof}
The above formula is easily obtained 
by the integration by parts. 
\end{proof}
When $d(f,x^p)>1$, 
we compute the coefficients of 
$(s-\beta(p))^{-m(f,x^p)}$ in the Laurent expansions of 
$Z_{\pm}(s)$, $Z(s)$. 
Let 
$$ 
C_{\pm}=\lim_{s\to\beta(p)} 
(s-\beta(p))^{m(f,x^p)} 
Z_{\pm}(s), 
\quad
C=\lim_{s\to\beta(p)} 
(s-\beta(p))^{m(f,x^p)} 
Z(s),
$$
respectively.
\begin{theorem}
Suppose that 
{\rm (i)} 
$f$ is nondegenerate over $\R$ 
with respect to its Newton polyhedron, 
{\rm (ii)} 
$\varphi(x)=x^p$, 
where every component of $p\in \Z_+^n$ is even, and 
{\rm (iii)} 
$d(f,x^p)>1$. 
If the support of $\chi$ is
contained in a sufficiently small neighborhood 
of the origin,  
then 
$C_+$ and $C_-$ are nonnegative and 
$C=C_{+}+C_{-}$ is positive. 
\end{theorem}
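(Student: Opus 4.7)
The plan is to compute each residue directly from the decomposition of $Z_\pm(s)$ established in the proof of Theorem~5.1, and to exploit the parity hypothesis on $p$ to see that every summand is nonnegative. First, I would recall the decomposition
\[
Z_{\pm}(s)=\sum_{\sigma\in\Sigma^{(n)}}\Big(\sum_k\!\!\sum_{\epsilon\in E(\pm\alpha_k,\sigma)}\!\! I^{(\epsilon)}_{\sigma,k}(s)+\sum_l\!\!\sum_{\epsilon\in\tilde E(\pm,\sigma)}\!\! J^{(\epsilon)}_{\sigma,l}(s)\Big),
\]
in which each summand is given by (\ref{eqn:5.5}) or (\ref{eqn:5.8}). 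Since every component of $p$ is even and each $a^j(\sigma)\in\Z^n$, the integer $g_{\sigma,p}(\epsilon)=\sum_{i,j}\delta(\epsilon_j)a^j_i(\sigma)p_i$ from (\ref{eqn:gsp}) is a sum of even integers, so $(-1)^{g_{\sigma,p}(\epsilon)}=1$; the same applies to $(-1)^{\tilde g_{\sigma,p}(\tilde\epsilon)}$. Consequently, for real $s$ in the region of convergence, each $I^{(\epsilon)}_{\sigma,k}(s)$ and $J^{(\epsilon)}_{\sigma,l}(s)$ is the integral of a pointwise nonnegative function, hence nonnegative.

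Second, the hypothesis $d(f,x^p)>1$ yields $\beta(p)=-1/d(f,x^p)\in(-1,0)$, so $\beta(p)\notin-\N$. From the pole-order table in the proof of Proposition~5.4, each $I^{(\epsilon)}_{\sigma,k}$ and each $J^{(\epsilon)}_{\sigma,l}$ has at $s=\beta(p)$ a pole of order at most $m:=m(f,x^p)$, with equality possible only when $|A_p(\sigma)|=m$ (for $I$) or $|A_p(\sigma)\cap B_l(\sigma)|=m$ (for $J$). Applying Lemma~5.6 iteratively, once for each $j\in A_p(\sigma)$ in (\ref{eqn:5.5}), gives
\[
\lim_{s\to\beta(p)}(s-\beta(p))^m I^{(\epsilon)}_{\sigma,k}(s)=\prod_{j\in A_p(\sigma)}\frac{1}{l(a^j(\sigma))}\cdot R^{(\epsilon)}_{\sigma,k},
\]
where $R^{(\epsilon)}_{\sigma,k}$ is the remaining integral obtained by setting $y_j=0$ for $j\in A_p(\sigma)$; by the parity step, $R^{(\epsilon)}_{\sigma,k}\geq 0$, and the analogous identity for $J^{(\epsilon)}_{\sigma,l}$ also produces a nonnegative limit. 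Summing over all indices gives $C_\pm\geq 0$, hence $C=C_++C_-\geq 0$.

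Finally, to obtain $C>0$, I would exhibit one strictly positive summand. As at the end of the proof of Proposition~5.4, choose an $n$-dimensional cone $\hat\sigma\in\Sigma$ whose $m$-dimensional face is contained in $\overline{\gamma^*}$, where $\gamma$ is the face of $\Gamma_+(f)$ whose relative interior contains $q=d(f,x^p)(p+\1)$; then $|A_p(\hat\sigma)|=m$. Since $f_{\hat\sigma}(0)\neq 0$ by Proposition~4.2(i), there is an index $k_0$ with $\tilde\psi_{k_0}(0)>0$ and $f_{\hat\sigma}$ of constant nonzero sign on $\mathrm{Supp}(\tilde\psi_{k_0})$. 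Taking $\epsilon=(+,\dots,+)$, the residue $R^{(\epsilon)}_{\hat\sigma,k_0}$ is the integral over an open neighborhood of $0$ in $(0,\infty)^{n-m}$ of a continuous, strictly positive function: the surviving factors $(y_j)_+^{l(a^j(\hat\sigma))\beta(p)+\langle a^j(\hat\sigma),p+\1\rangle-1}$ for $j\notin A_p(\hat\sigma)$ have exponent $>-1$, so are locally integrable and positive, while $|f_{\hat\sigma}|^{\beta(p)}$ and $\tilde\psi_{k_0}$ are positive near $0$. Hence $R^{(\epsilon)}_{\hat\sigma,k_0}>0$, this single term contributes strictly positively to $C_++C_-=C$, and all other terms are nonnegative, so $C>0$. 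The main technical point is the last step: one must verify that after setting $y_j=0$ for $j\in A_p(\hat\sigma)$ the remaining integrand is strictly positive on a set of positive measure, which is precisely what Proposition~4.2(i) guarantees via $f_{\hat\sigma}(0)\neq 0$.
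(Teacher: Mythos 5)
Your overall strategy is the same as the paper's — pass to the toric resolution, use evenness of $p$ to kill the signs $(-1)^{g_{\sigma,p}(\epsilon)}$, extract the residue via iterated applications of Lemma~5.6, and exhibit one strictly positive summand — but you handle the contributions coming from the zeros of $f_\sigma$ (the $J$-terms) differently, and that is where you have a gap. The paper never tries to show the $J$-contributions are nonnegative. Instead it deforms the partition of unity $\{\psi_k,\eta_l\}$, shrinking the supports of the $\eta_l$, so that the $H^{(l)}_\pm(\sigma)$-terms tend to $0$ while $\sum_k G^{(k)}_\pm(\sigma)$ converges, leading to the clean formula~(\ref{eqn:5.17}) for $C_\pm$ in terms of the $G_{u,v}(\sigma)$ of~(\ref{eqn:5.16}) alone; those involve only $\tilde\chi_{\sigma}\ge 0$, so nonnegativity for even $p$ is immediate. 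This deformation step is precisely engineered to avoid confronting the cut-off $\hat\eta_l$ at all.

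You instead assert that each $J^{(\epsilon)}_{\sigma,l}(s)$ in~(\ref{eqn:5.8}) is the integral of a pointwise nonnegative function once $(-1)^{\tilde g_{\sigma,p}}$ is removed, and hence that its residue is nonnegative. For this you need $\hat\eta_l\ge 0$, and that is not given: $\hat\eta_l$ is produced in~(\ref{eqn:5.71}) by the coordinate change that straightens $f_\sigma$, and the paper only records that $\hat\eta_l\in C^\infty_0(\R^n)$ with $\hat\eta_l(0)\ne 0$. The claim is in fact true for even $p$ — the full integrand of $J^{(l)}_{\sigma,\pm}$ is nonnegative in the old coordinates because each $\langle a^j(\sigma),p\rangle$ is even, the Jacobian enters in absolute value, and the explicit factors pulled out in~(\ref{eqn:5.71}) are nonnegative, so $\hat\eta_l\ge 0$ on a dense set and hence everywhere by continuity — but this is a structural fact about the resolution that must be established, not merely asserted; as written your nonnegativity step for the $J$-terms is incomplete. (You also need the same verification when you appeal to ``the analogous identity for $J^{(\epsilon)}_{\sigma,l}$'' in the residue computation.) Your positivity step — choose $\hat\sigma$ with $\card A_p(\hat\sigma)=m$, use $f_{\hat\sigma}(0)\neq 0$ to locate a $\tilde\psi_{k_0}$ with $\tilde\psi_{k_0}(0)>0$, take $\epsilon=(+,\ldots,+)$, and check that the surviving exponents exceed $-1$ — is sound and parallels the paper's closing remark that $E(+,\sigma)\neq\emptyset$.
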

\begin{proof}
Let 
$$
{\Sigma}_p^{(n)}=
\{\sigma\in\Sigma^{(n)};\card A_p(\sigma)=m(f,x^p)\}.
$$ 

First, we consider the case when $m(f,x^p)<n$. 
For $\sigma\in{\Sigma}_p^{(n)}$, 
considering the equations (\ref{eqn:5.4}) and 
applying Lemma~5.6 to (\ref{eqn:5.5}), (\ref{eqn:5.8}) 
with respect to each $y_j$ for $j\in A_p(\sigma)$, 
we have
\begin{equation}
\lim_{s\to\beta(p)} 
(s-\beta(p))^{m(f,x^p)} 
Z^{(\sigma)}_{\pm}(s)=
\sum_{k} G_{\pm}^{(k)}(\sigma) +
\sum_{l} H_{\pm}^{(l)}(\sigma),
\label{eqn:5.13}
\end{equation}
with 
\begin{equation}
\begin{split}
& 
G_{\pm}^{(k)}(\sigma)=
\sum_{\epsilon \in E(\pm{\alpha_k},\sigma)}
\frac{(-1)^{g_{\sigma,p}(\epsilon)}}
{\prod_{j\in A_p(\sigma)}l(a^j(\sigma))}\cdot \\
&
\quad\quad\quad
\int_{D(\sigma)}
\left(
\prod_{j\not\in A_p(\sigma)}(y_j)_{\epsilon_j}^{
l(a^j(\sigma))\beta(p)+\langle a^j(\sigma),p+\1\rangle-1
}
\right)
|f_{\sigma}(\hat{y})|^{\beta(p)}
\tilde{\psi}_{k}(\hat{y}) d\hat{y}, 
\end{split}
\label{eqn:5.14}
\end{equation}
\begin{equation}
\begin{split}
&
H_{\pm}^{(l)}(\sigma)=
\sum_{\tilde{\epsilon} \in \tilde{E}(\pm,\sigma)}
\frac{(-1)^{\tilde{g}_{\sigma,p}(\tilde{\epsilon})}}
{\prod_{j\in A_p(\sigma)}l(a^j(\sigma))}\cdot \\
&
\quad\quad\quad
\int_{D(\sigma)}
(y_k)_{\epsilon_k}^{\beta(p)}
\left(
\prod_{j\in B_l(\sigma)\setminus A_p(\sigma)}
(y_j)_{\epsilon_j}^{
l(a^j(\sigma))\beta(p)+\langle a^j(\sigma),p+\1\rangle-1
}
\right)
\hat{\eta}_{l}(\hat{y}) d\hat{y},
\end{split}
\label{eqn:5.15}
\end{equation}
where  
the summations in (\ref{eqn:5.13}) are taken for all $k$,$l$ 
satisfying 
$T_{A_p(\sigma)}\cap {\rm Supp}(\psi_k)\neq \emptyset$ 
and $A_p(\sigma)\subset B_l(\sigma)$, 
$\hat{y}$ is defined by 
$\hat{y}_j=0$ for $j\in A_p(\sigma)$,  
$\hat{y}_j=y_j$ for $j\not\in A_p(\sigma)$, 
$d\hat{y}=\prod_{j\not\in A_p(\sigma)} dy_j$, 
$D(\sigma)=\{
y\in\R^n;y_j=0 \mbox{ for $j\in A_p(\sigma)$}
\}
(\approx
\R^{n-m(f,x^p)})$ and 
the other symbols are the same 
as in (\ref{eqn:5.5}), (\ref{eqn:5.8}). 
Note that the integrals in (\ref{eqn:5.15}) are convergent and 
interpreted as improper integrals. 
In (\ref{eqn:5.14}), (\ref{eqn:5.15}), 
we deform the cut-off functions $\psi_k$ and $\eta_l$ 
as the volume of the support of $\eta_l$ tends to
zero for all $l$.
Then it is easy to see that 
the limit of $H_{\pm}^{(l)}(\sigma)$ is zero, while 
that of $\sum_k G_{\pm}^{(k)}(\sigma)$ is 
$G_{+,\pm}(\sigma)+G_{-,\mp}(\sigma)$, respectively, with 
\begin{equation}
\begin{split}
&G_{u,v}(\sigma)=
\sum_{\epsilon \in E(u,\sigma)}
\frac{(-1)^{g_{\sigma,p}(\epsilon)}}
{\prod_{j\in A_p(\sigma)}l(a^j(\sigma))}\cdot \\
&
\quad\quad
\int_{D_{v}(\sigma)}
\left(
\prod_{j\not\in A_p(\sigma)}(y_j)_{\epsilon_j}^{
l(a^j(\sigma))\beta(p)+\langle a^j(\sigma),p+\1\rangle-1
}
\right)
|f_{\sigma}(\hat{y})|^{\beta(p)}
\tilde{\chi}_{\sigma}(\hat{y}) d\hat{y},
\end{split}
\label{eqn:5.16}
\end{equation}
where $u,v\in \{+,-\}$ and 
$$
D_{v}(\sigma)=\{y\in{\rm Supp}(\tilde{\chi}_{\sigma});
v f_{\sigma}(y)>0 \mbox{ and $y_j=0$ for $j\in A_p(\sigma)$}\}.
$$ 
Note that the above integral is also improper, 
if $f_{\sigma}$ has a zero on $D(\sigma)$. 
As a result, we have 
\begin{equation}
\begin{split}
&C_{\pm}=
\sum_{\sigma\in {\Sigma}_p^{(n)}}
(G_{+,\pm}(\sigma)+G_{-,\mp}(\sigma)),\\
\end{split}
\label{eqn:5.17}
\end{equation}
respectively.

Next, we consider the case when $m(f,x^p)=n$. 
Noticing 
$\Sigma_p^{(n)}=
\{\sigma;A_p(\sigma)=B(\sigma)=\{1,\ldots,n\}\}$, 
we obtain the corresponding coefficients as  
\begin{equation}
C_{\pm}
=
\sum_{\sigma\in {\Sigma}_p^{(n)}}
\sum_{\epsilon \in E(\pm\alpha,\sigma)}
\frac{(-1)^{g_{\sigma,p}(\epsilon)}|f_{\sigma}(0)|^{\beta(p)}}
{\prod_{j=1}^n l(a^j(\sigma))},
\label{eqn:5.16n}
\end{equation}
where $\alpha$ is the sign of $f_{\sigma}(0)$. 

Now, let us assume that 
every component of $p$ is even. 
By the definition (\ref{eqn:gsp}),  
$g_{\sigma,p}(\epsilon)$ is also even.   
From 
(\ref{eqn:5.16}),(\ref{eqn:5.17}),(\ref{eqn:5.16n}), 
we see  
the nonnegativity
of the coefficients $C_+$, $C_-$. 
Moreover, since $E(+,\sigma)$  
is nonempty, the coefficient
$C=C_++C_-$ is positive. 
\end{proof}
The following proposition is concerned with 
the poles of 
$Z_{+}(s)$ and $Z_-(s)$, 
which are induced by the set of zeros of $f_{\sigma}$.
\begin{proposition}
Suppose that 
the conditions {\rm (i)}, {\rm (ii)} in 
Theorem~$5.1$ are satisfied
and 
{\rm (iii)} $d(f,x^p)<1$. 
Let $1,\ldots,k_*$ be all the natural numbers strictly smaller than 
$-\beta(p)=1/d(f,x^p)$. 
If the support of $\chi$ is
contained in a sufficiently small neighborhood 
of the origin, then 
$Z_{+}(s)$ and $Z_{-}(s)$ have 
at $s=-1,\ldots,-k_*$ poles of order not 
higher than $1$ and do not have other poles 
in the region ${\rm Re}(s)>\beta(p)$.    
Moreover, 
let $a_k^{+}$, $a_k^-$ be the residues of 
$Z_{+}(s)$, $Z_-(s)$ at $s=-k$, 
respectively,  
then we have $a_k^+=(-1)^{k-1}a_k^-$ for $k=1,\ldots,k_*$.
\end{proposition}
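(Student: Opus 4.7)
The plan is to refine the decomposition
$Z_\pm(s) = \sum_{\sigma,k,\epsilon} I^{(\epsilon)}_{\sigma,k}(s) + \sum_{\sigma,l,\epsilon} J^{(\epsilon)}_{\sigma,l}(s)$
set up in the proof of Theorem~5.1, together with the explicit integral formulas (\ref{eqn:5.5}) and (\ref{eqn:5.8}). By Lemma~5.2, every pole of $I^{(\epsilon)}_{\sigma,k}(s)$ and every pole of $J^{(\epsilon)}_{\sigma,l}(s)$ produced by a factor $(y_j)^{l(a^j(\sigma))s+\langle a^j(\sigma),p+\1\rangle-1}_{\epsilon_j}$ with $j\in\tilde B_l(\sigma)$ has real part at most $\beta(p)$, by the very definition (\ref{eqn:5.10}) of $\beta(p)$. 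Consequently, in the strict half-plane $\mathrm{Re}(s)>\beta(p)$ the only candidate poles arise from the single factor $(y_k)^s_{\epsilon_k}$ present inside each $J^{(\epsilon)}_{\sigma,l}(s)$; by Lemma~5.2 these lie in $-\N$, and intersecting with $\mathrm{Re}(s)>\beta(p)=-1/d(f,x^p)$ leaves exactly $s=-1,\ldots,-k_*$. Each such pole is simple in every single $J^{(\epsilon)}_{\sigma,l}(s)$ because the strict inequality $-k>\beta(p)$ prevents coincidence with any of the ``toric'' candidate poles; summing, $Z_\pm(s)$ has pole order at most $1$ at each $s=-k$.

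For the residue identity, I would apply Lemma~5.6 to (\ref{eqn:5.8}) in the variable $y_k$ alone. Since the other factors of the integrand are smooth in $y_k$ near $y_k=0$, this yields
\[
\mathrm{Res}_{s=-k} J^{(\epsilon)}_{\sigma,l}(s) = \frac{(c(\epsilon_k))^{k-1}}{(k-1)!}\, T_{\sigma,l}(\tilde\epsilon;k),
\]
where $c(+)=1$, $c(-)=-1$, and $T_{\sigma,l}(\tilde\epsilon;k)$ packages the sign $(-1)^{\tilde g_{\sigma,p}(\tilde\epsilon)}$, the $(k-1)$-st $y_k$-derivative of $\hat\eta_l$ evaluated at $y_k=0$, and the integration over the variables $y_j$ with $j\in B_l(\sigma)$. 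The crucial observation is that $T_{\sigma,l}(\tilde\epsilon;k)$ depends on $\epsilon$ only through $\tilde\epsilon=(\epsilon_j)_{j\in B_l(\sigma)}$, not through $\epsilon_k$.

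Summing over $\epsilon\in\tilde E(\pm,\sigma)$ and partitioning by $\tilde\epsilon$ then finishes the argument. Indeed, flipping $\epsilon_k$ reverses the parity of $\delta(\epsilon_k)+\sum_{j\in B_l(\sigma)} l(a^j(\sigma))\delta(\epsilon_j)$, so for each fixed $\tilde\epsilon$ there is a unique $\epsilon_k^+$ with $(\epsilon_k^+,\tilde\epsilon)\in\tilde E(+,\sigma)$ and its complement $\epsilon_k^-=-\epsilon_k^+$ satisfies $(\epsilon_k^-,\tilde\epsilon)\in\tilde E(-,\sigma)$. The two corresponding residues differ by exactly the factor $(c(\epsilon_k^-)/c(\epsilon_k^+))^{k-1}=(-1)^{k-1}$, so after summing over $\sigma$, $l$ and $\tilde\epsilon$ one obtains $a_k^-=(-1)^{k-1}a_k^+$, which is equivalent to the stated identity since $(-1)^{2(k-1)}=1$. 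The main technical obstacle is this last bookkeeping step: one must verify that the sign $(-1)^{\tilde g_{\sigma,p}(\tilde\epsilon)}$, which arose when separating the $(y_j)^s_\pm$ terms to derive (\ref{eqn:5.8}), genuinely does not involve $\epsilon_k$, so that the $(-1)^{k-1}$ discrepancy between $a_k^+$ and $a_k^-$ is attributable entirely to the isolated $(y_k)^s_{\epsilon_k}$ factor.
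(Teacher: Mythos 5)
Your proof is correct and follows essentially the same route as the paper: you isolate the $J^{(\epsilon)}_{\sigma,l}(s)$ contributions, note that all ``toric'' poles (from the factors with exponent $l(a^j(\sigma))s+\langle a^j(\sigma),p+\1\rangle-1$) lie at $\mathrm{Re}(s)\leq\beta(p)$, apply Lemma~5.6 in the lone $(y_k)^s_{\epsilon_k}$ variable, and obtain the $(\pm 1)^{k-1}$ relation between residues. The paper abstracts this to a model integral $g_\pm(s)$ and reads off $C_k^+=(-1)^{k-1}C_k^-$ directly, whereas you carry out the $\tilde E(\pm,\sigma)$ pairing explicitly; these are the same argument in slightly different packaging.
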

\begin{proof}
For $j=2,\ldots,n$, let $l_j,m_j$ be positive integers 
such that $m_j/l_j>k_*$.
Let $\epsilon_j=+$ or $-$, $j=2,\ldots,n$, 
be arbitrarily fixed. 
Let $C_k^{\pm}$ be the residues at $s=-k$ of 
the functions
\begin{equation*}
g_{\pm}(s):=
\int_{\R^n}
\left(
(y_1)_{\pm}^s
\prod_{j\in B}
(y_j)_{\epsilon_j}^
{l_j s+m_j-1}
\right)
\eta(y)dy,
\end{equation*}
respectively, 
where $B$ is a subset in $\{2,\ldots,n\}$ and 
$\eta\in C_0^{\infty}(\R^n)$ with 
$\eta(0)\neq 0$. 

By carefully observing 
the analysis of $J_{\sigma,\pm}^{(l)}(s)$ 
in the proof of Theorem~5.1, 
it suffices to show the following. 
\begin{enumerate}
\item[(a)] 
$g_{+}(s)$ and $g_{-}(s)$ have at $s=-1,\ldots,-k_*$ 
poles of order $1$ 
and they do not have other poles in 
Re$(s)>\beta(p)$; 
\item[(b)] 
$C_k^+=(-1)^{k-1}C_k^-$ for $k=1,\ldots,k_*$.  
\end{enumerate}
From Lemma 5.2, (a) is easy to see. 
By using Lemma 5.6, we obtain 
$$
C_k^{\pm}=
\frac{(\pm 1)^{k-1}}{(k-1)!} 
\int_{\R^{n-1}}
\left(
\prod_{j\in B}
(y_j)_{\epsilon_j}^
{-l_j k+m_j -1}
\right)
\frac{\partial^{k-1}\eta}{\partial y_1^{k-1}}
(0,y_2,\ldots,y_n)
dy_2 \cdots dy_n.
$$
This expression implies (b). 
\end{proof}

\begin{remark}
We can easily generalize the results in this subsection 
as follows. 
The same assertions in 
Theorem~5.1, Theorem~5.7 and Proposition~5.8  
can be obtained,  
even if $x^p$ is replaced by $x^p\tilde{\varphi}(x)$ where 
$\tilde{\varphi}\in C^{\infty}(U)$ 
with $\tilde{\varphi}(0)\neq 0$.
Here, in the case of Theorem~5.7, 
when $\tilde{\varphi}(0)<0$,
``positive'' and ``nonnegative'' 
must be changed to 
``negative'' and ``nonpositive'', respectively.
\end{remark}

\subsection{The convenient case}
Next, let us consider the poles of 
$Z_{\pm}(s)$ in (\ref{eqn:5.1}) 
and 
$Z(s)$ in (\ref{eqn:5.2}) 
in the case that $f$ or $\varphi$ is convenient, i.e., 
the associated Newton polyhedron intersects 
all the coordinate axes. 
\begin{theorem}
Suppose that 
{\rm (i)} 
$f$ is nondegenerate over $\R$ 
with respect to its Newton polyhedron and 
{\rm (ii)} 
at least one of the following conditions is satisfied:
\begin{enumerate}
\item[(a)] $f$ is convenient;
\item[(b)] $\varphi$ is convenient; 
\item[(c)] $\varphi$ is  real analytic
on a neighborhood of the origin. 
\end{enumerate}
If the support of $\chi$ is
contained in a sufficiently small neighborhood 
of the origin,  then
the poles of the functions 
$Z_{+}(s)$, $Z_-(s)$ and $Z(s)$ are contained 
in the set 
$$\left\{
-\frac{\tilde{l}(a)+ \langle a \rangle +\nu}{l(a)}
;\,\, \nu\in\Z_+,\,\, a\in\tilde{\Sigma}^{(1)}
\right\}
\cup (-\N),$$
where $l(a)$ is as in $(\ref{eqn:4.1})$, 
$\tilde{l}(a)$ is as in Lemma 5.11, below, 
and $\tilde{\Sigma}^{(1)}$ is as in 
Theorem~$5.1$, and 
$$
\max
\left\{
-\frac{\tilde{l}(a)+ \langle a \rangle }{l(a)}
;\,\,  a\in\tilde{\Sigma}^{(1)}
\right\}
=-\frac{1}{d(f,\varphi)}.
$$

Moreover, for each $Z_+(s)$,$Z_-(s)$ and $Z(s)$, 
if $s=-1/d(f,\varphi)$ is a pole, 
then its order is not larger than 
\begin{equation*}
\begin{cases}
m(f,\varphi)& \quad \mbox{if $1/d(f,\varphi)$ 
is not an integer}, \\
\min\{m(f,\varphi)+1, n\}&
\quad \mbox{otherwise}.
\end{cases}
\end{equation*}
\end{theorem}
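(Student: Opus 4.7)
My plan is to follow the strategy of Theorem~5.1 verbatim, replacing the monomial quantity $\langle a,p\rangle$ by the support function $\tilde{l}(a)=\min\{\langle a,\alpha\rangle:\alpha\in\Gamma_+(\varphi)\}$ of the Newton polyhedron of $\varphi$, which I expect is the content of Lemma~5.11. Fix a simplicial subdivision $\Sigma$ of $\Sigma_0$ and the associated real toric resolution $\pi:Y_\Sigma\to\R^n$. On each chart $\C^n(\sigma)$ for $\sigma\in\Sigma^{(n)}$, Proposition~4.2 already gives $(f\circ\pi(\sigma))(y)=\bigl(\prod_j y_j^{l(a^j(\sigma))}\bigr)f_\sigma(y)$ with $f_\sigma(0)\neq 0$, together with the monomial Jacobian. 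The decisive new step is to establish a companion factorization
$$
\varphi\circ\pi(\sigma)(y)=\Bigl(\prod_{j=1}^n y_j^{\tilde{l}(a^j(\sigma))}\Bigr)\varphi_\sigma(y),
$$
with $\varphi_\sigma$ a $C^\infty$ function on a neighborhood of the preimage (not necessarily nonvanishing at the origin).

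Once this factorization is available, I would mirror the rest of the proof of Theorem~5.1 almost word for word: decompose $Z_\pm(s)=\sum_\sigma Z_\pm^{(\sigma)}(s)$, split each chart integral by signs $\epsilon\in\{+,-\}^n$ and by whether the support meets $\{f_\sigma=0\}$ (where the nondegeneracy of $f$ plus Proposition~4.2(iii) lets us locally straighten the zero set), and apply Lemma~5.2 coordinate by coordinate. The smooth pieces analogous to $I_{\sigma,k}^{(\epsilon)}$ contribute poles at $s=-(\tilde{l}(a^j(\sigma))+\langle a^j(\sigma)\rangle+\nu)/l(a^j(\sigma))$ with $\nu\in\Z_+$, $j\in B(\sigma)$, while the pieces analogous to $J_{\sigma,l}^{(\epsilon)}$ add the $(-\N)$ contribution by exactly the mechanism described in Remark~5.3. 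The identity $\max\{-(\tilde{l}(a)+\langle a\rangle)/l(a):a\in\tilde\Sigma^{(1)}\}=-1/d(f,\varphi)$ is a direct geometric translation of the inscription definition (2.1), and I expect this is packaged in Lemma~5.11. The order bound at $s=-1/d(f,\varphi)$ follows an argument paralleling Proposition~5.4: for each $\sigma$, count the indices $j$ that saturate the pole equation, use linear independence of $a^1(\sigma),\ldots,a^n(\sigma)$ together with Proposition~3.3 to bound this count by $n-\dim\gamma_j = m(f,\varphi)$ (with $\gamma_j$ the face of $\Gamma_+(\varphi)$ containing the associated $\alpha\in\Gamma(\varphi,f)$), and pick up the extra ``$+1$'' in the integer case from the additional $(y_k)^s_{\epsilon_k}$ factor in the $J$-integrals, capped at $n$.

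The main obstacle is producing the $\varphi$-factorization in each of the three alternative hypotheses. In case~(c), analyticity lets me expand $\varphi$ as an absolutely convergent Taylor series near the origin, and termwise composition with $\pi(\sigma)$ extracts the common factor $\prod_j y_j^{\tilde{l}(a^j(\sigma))}$ exactly, leaving an analytic $\varphi_\sigma$ on a neighborhood of the preimage. In cases~(a) and~(b), where $\varphi$ is only $C^\infty$, no literal factorization is available; instead I would truncate $\varphi = P_N + R_N$ via Taylor's theorem, treat each monomial of $P_N$ by the monomial analysis of Theorem~5.1 and Remark~5.9 (noting that $\alpha\in\Gamma_+(\varphi)\cap\Z_+^n$ forces $\langle a,\alpha\rangle\geq \tilde{l}(a)$, so every monomial pole already lies in the claimed set), and show that the remainder $R_N$ contributes no poles in $\mathrm{Re}(s)>-cN$ for a positive constant $c$ independent of $N$. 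In case~(a), convenience of $f$ forces $l(a^j(\sigma))>0$ on every edge of every $n$-dimensional cone of $\Sigma$, so the denominators in the pole formula are bounded below and the remainder is pushed arbitrarily far to the left as $N\to\infty$; in case~(b), the analogous role is played by convenience of $\varphi$, which controls $\tilde{l}(a^j(\sigma))$ from below and forces the numerators in the remainder's pole formula to grow with $N$. This Taylor-bookkeeping step is where the real technical work sits; the rest is a reparametrization of the monomial proof.
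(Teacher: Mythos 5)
Your overall strategy — factor $\varphi\circ\pi(\sigma)$ via a companion to $l(a)$, then run the Theorem~5.1 machinery — is the right one, and you correctly identify Lemma~5.11 as the needed factorization. But several of the ways you propose to implement it diverge from what actually works, and two of those divergences are genuine gaps.

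First, your belief that ``no literal factorization is available'' in the $C^\infty$ cases (a) and (b) is mistaken, and this misconception drives you to an iterative truncation scheme that does not close. Lemma~5.11 in fact delivers a single, fixed factorization $\varphi(\pi(\sigma)(y))=\bigl(\prod_{j}y_j^{\tilde{l}(a^j(\sigma))}\bigr)\varphi_\sigma(y)$ even for merely $C^\infty$ $\varphi$: one truncates via Taylor at a level $N$ chosen once and for all so that for every edge $a^j(\sigma)\in\N^n$ and every multi-index $p$ of degree $N$ one has $\langle a^j(\sigma),p\rangle\geq\tilde{l}(a^j(\sigma))$ (this uses only that all entries of $a^j(\sigma)$ are positive), while for $a^j(\sigma)\notin\N^n$ the lemma \emph{defines} $\tilde{l}=0$ (and for convenient $\varphi$ the support function actually equals $0$ there), so nothing needs to be factored out of those variables. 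Your alternative — letting $N\to\infty$ and arguing that $R_N$'s poles slide off to $-\infty$ — fails in case (b): if $a^j(\sigma)\notin\N^n$ but $l(a^j(\sigma))>0$, one can choose $p$ with $\langle p\rangle=N$ and $\langle a^j(\sigma),p\rangle=0$, producing a pole of the $R_N$-piece at $s=-(\langle a^j(\sigma)\rangle+\nu)/l(a^j(\sigma))$ which does \emph{not} move with $N$. (Convenience of $\varphi$ ``controls $\tilde{l}$ from below'' only in the sense $\tilde{l}\geq 0$, and certainly does not make the remainder's numerators grow.) Those stationary poles happen to lie inside the advertised set precisely because $\tilde{l}(a^j(\sigma))=0$ for those edges — but that is the content of Lemma~5.11's factorization, not a consequence of $R_N$ vanishing in $\mathrm{Re}(s)>-cN$. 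Similarly, in case (a) your cited reason (``denominators bounded below'') is not what makes the remainder move; what you actually need and have available is $\tilde\Sigma^{(1)}\subset\N^n$ when $f$ is convenient, so that the numerators $\langle a^j(\sigma),p\rangle$ grow like $N$ for all relevant $j$.

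Second, the identity $\max\{-(\tilde{l}(a)+\langle a\rangle)/l(a):a\in\tilde\Sigma^{(1)}\}=-1/d(f,\varphi)$ is not ``packaged in Lemma~5.11'' and is not a bare rewriting of the inscription definition: it is a separate computation (equation (5.21) in the paper) which uses one of (a), (b), (c) in an essential way. Without them the maximum can equal $-1/d_f>-1/d(f,\varphi)$, which is exactly what Example~1 in Section~7.2 exhibits. Your proposal should flag this as an independent step.

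Third, and most concretely: for the order bound your plan is to count the indices $j$ that saturate $-\bigl(\tilde{l}(a^j(\sigma))+\langle a^j(\sigma)\rangle\bigr)/l(a^j(\sigma))=-1/d(f,\varphi)$ and bound this count by $m(f,\varphi)$ using linear independence of the $a^j(\sigma)$ ``together with Proposition~3.3.'' This does not go through with the subdivision used in Theorem~5.1. Each saturating $j$ says $d(f,\varphi)(\alpha_j+\1)\in H(a^j(\sigma),l(a^j(\sigma)))$ for some minimizer $\alpha_j\in\Gamma(\varphi,f)$, and linear independence of the $a^j(\sigma)$ gives a bound of the form $\rho_f(q)$ only if all the $\alpha_j$'s can be taken to be the \emph{same} point $q$. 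The paper secures this by taking $\Sigma_0=\{\sigma\cap\tilde\sigma:\sigma\in\Sigma_f,\ \tilde\sigma\in\Sigma_\varphi\}$ and a simplicial subdivision thereof, which guarantees a single vertex $\alpha_\sigma$ of $\Gamma_+(\varphi)$ with $\langle a^j(\sigma),\alpha_\sigma\rangle=\tilde{l}(a^j(\sigma))$ for \emph{all} $j=1,\dots,n$; then the count equals $\rho_f\bigl(d(f,\varphi)(\alpha_\sigma+\1)\bigr)\leq m(f,\varphi)$. Your proposal never introduces the refinement by $\Sigma_\varphi$, so the crucial step of your order bound is unjustified. (Remark~5.12 is explicit that the coarser subdivision suffices only for the pole \emph{location}, not the order.)

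So: the skeleton matches the paper (Lemma~5.11 plus the Theorem~5.1 machinery plus an order-count), but as written the case-(b) remainder argument is wrong, the max formula is misattributed and its dependence on (a)/(b)/(c) is missed, and the order bound is missing the refinement of the fan by $\Gamma_+(\varphi)$.
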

\begin{proof}
Let $\Sigma_f$ and $\Sigma_{\varphi}$ 
be the fans constructed from 
the Newton polyhedra of $f$ and $\varphi$, respectively. 
Define $\Sigma_0=\{\sigma\cap\tilde{\sigma};
\sigma\in\Sigma_f,\,\,
\tilde{\sigma}\in\Sigma_{\varphi}\}$. 
Then it is easy to see that $\Sigma_0$ is also a fan. 
Fix a simplicial subdivision $\Sigma$ of $\Sigma_0$
and let $(Y_{\Sigma},\pi)$ be the real resolution 
associated with $\Sigma$ as in Section 4. 

First, let us compute the form of $\varphi\circ\pi$. 
\begin{lemma}
Let $\varphi$ be a $C^{\infty}$ function defined
on a neighborhood of the origin. 
When $\varphi$ is convenient or real analytic near the origin, 
define $\tilde{l}(a)=\min\{ \langle a,\a \rangle ;\a\in\Gamma_+(\varphi)\}$
for $a\in \Z_+^n$. 
Otherwise, define 
$\tilde{l}(a)=\min\{ \langle a,\a \rangle ;\a\in\Gamma_+(\varphi)\}$
for $a\in \N^n$ and 
$\tilde{l}(a)=0$ for 
$a\in \Z_+^n\setminus \N^n$. 
Then, for $\sigma\in\Sigma^{(n)}$, 
$\varphi\circ\pi(\sigma)$ can be expressed as  
\begin{equation}
\varphi(\pi(\sigma)(y))=
\left(
\prod_{j=1}^n y_j^{\tilde{l}(a^j(\sigma))}
\right)
\varphi_{\sigma}(y),
\label{eqn:5.18}
\end{equation}
where 
$\varphi_{\sigma}$ is a 
$C^{\infty}$ function defined on a neighborhood of the origin. 
$($Needless to say, 
if $\varphi$ is real analytic, so is $\varphi_{\sigma}$.$)$  
\end{lemma}
\begin{proof}[Proof of Lemma~5.11]
Let us consider the case that 
$\varphi$ is a $C^{\infty}$ function. 
By using Taylor's formula, 
$\varphi$ can be expressed as (\ref{eqn:3.4})
in Section~3. 
Substituting $x=\pi(\sigma)(y)$ with (\ref{eqn:4.2}) 
into (\ref{eqn:3.4}), 
we have 
\begin{equation*}
\begin{split}
&\varphi(\pi(\sigma)(y))= \\
&\sum_{\a\in S_{\varphi}\cap U_N}
c_{\a}\left(
\prod_{j=1}^n y_j^{ \langle a^j(\sigma),\a \rangle }
\right)+
\sum_{p\in\Z_+^n, \langle p \rangle =N}
\left(
\prod_{j=1}^n y_j^{ \langle a^j(\sigma),p \rangle }
\right)
\varphi_{p}(\pi(\sigma)(y)).
\end{split}
\end{equation*}
Here, take a sufficiently large $N\in\N$ such that 
the union of the hypersurfaces $H(a,\tilde{l}(a))\cap \R_+^n$ 
for all $a\in\Sigma^{(1)}\cap\N^n$ is contained in 
the set $U_N=\{\a\in\R_+^n; \langle \a \rangle \leq N\}$.
If $a\in\Sigma^{(1)}\cap \N^n$, then 
we see that 
$ \langle a,\alpha \rangle \geq \tilde{l}(a)$
for $\alpha\in\Gamma_+(\varphi)$ and 
$ \langle a,p \rangle \geq \tilde{l}(a)$ 
for $p\in\Z_+^n$ with $ \langle p \rangle =N$.
Therefore, we can get 
\begin{equation*}
\varphi(\pi(\sigma)(y))=
\left(
\prod_{j\in C(\sigma)} y_j^{\tilde{l}(a^j(\sigma))}
\right)
\varphi_{\sigma}(y),
\label{eqn:}
\end{equation*}
where $C(\sigma)=\{j;a^j(\sigma)\in\N^n\}\subset
\{1,\ldots,n\}$ and $\varphi_{\sigma}$ is a 
$C^{\infty}$ function defined on a neighborhood of the origin. 
Notice that if $\varphi$ is convenient, then 
$\tilde{l}(a^j(\sigma))=0$ for $a\in \Z_+^n\setminus \N^n$. 
Thus, we obtain the expression (\ref{eqn:5.18}) for 
every $C^{\infty}$ function $\varphi$.

The case of real analytic $\varphi$ is easier, so
the proof is omitted. 
\end{proof}

Using the map $\pi: Y_{\Sigma}\to\R^n$ with $x=\pi(y)$ and 
the cut-off functions $\{\chi_{\sigma};\sigma\in\Sigma^{(n)}\}$ 
in the proof of Theorem~5.1 and substituting (\ref{eqn:5.18}), 
we have
\begin{eqnarray*}
&&
Z_{\pm}(s)=\int_{\R^n}f(x)_{\pm}^s \varphi(x)\chi(x)dx \\
&&
\quad=\int_{Y_{\Sigma}} ((f\circ\pi)(y))_{\pm}^s (\varphi\circ\pi)(y) 
(\chi\circ\pi)(y)
|J_{\pi}(y)|dy \\
&&
\quad=\sum_{\sigma\in\Sigma^{(n)}} Z_{\pm}^{(\sigma)}(s),
\end{eqnarray*}
with
\begin{equation}
\begin{split}
&Z_{\pm}^{(\sigma)}(s)
=\int_{\R^n} ((f\circ\pi(\sigma))(y))_{\pm}^s 
(\varphi\circ\pi(\sigma))(y)
(\chi\circ\pi(\sigma))(y)\chi_{\sigma}(y)
|J_{\pi(\sigma)}(y)|dy \\
&\quad 
=\int_{\R^n} 
\left(
\prod_{j=1}^n y_j^{l(a^j(\sigma))}f_{\sigma}(y)
\right)_{\pm}^s 
\left(
\prod_{j=1}^n y_j^{\tilde{l}(a^j(\sigma))}
\varphi_{\sigma}(y)
\right) 
\left|
\prod_{j=1}^n y_j^{ \langle a^j(\sigma) \rangle -1}
\right|\tilde{\chi}_{\sigma}(y)dy, 
\end{split}
\label{eqn:5.19}
\end{equation}
where $\tilde{\chi}_{\sigma}(y)
=(\chi\circ\pi(\sigma))(y)\chi_{\sigma}(y)$.
By an argument similar to that in the proof of Theorem~5.1, 
we see that the poles of $Z_{\pm}^{(\sigma)}(s)$ are contained in 
the set 
\begin{equation}
\left\{
-\frac{\tilde{l}(a^j(\sigma))+ \langle a^j(\sigma) \rangle +\nu}
{l(a^j(\sigma))}
; j\in B(\sigma), \,\, \nu\in\Z_+
\right\}\cup (-\N),
\label{eqn:5.20}
\end{equation}
where 
$B(\sigma)$ is as in (\ref{eqn:5.7}).
Note that the set $(-\N)$ is necessary to express the 
poles because of the same reason as in Remark 5.3.

Next, consider a geometrical meaning of the largest element
of the union of the first set in (\ref{eqn:5.20}) for all 
$\sigma\in\Sigma^{(n)}$. 
If $\varphi$ is convenient or 
real analytic near the origin, then
$\tilde{l}(a)=\min\{ \langle a,\a \rangle ;\a\in\Gamma_+(\varphi)\}$
if $a\in \Z_+^n$.
Therefore, from the definitions of 
$\beta(\cdot)$ in (\ref{eqn:5.10})
and Proposition 5.4, 
we have 
\begin{equation}
\begin{split}
&\quad\max\left\{-\frac{\tilde{l}(a)+ \langle a \rangle }{l(a)};
a\in\tilde{\Sigma}^{(1)}\right\}\\
&=\max\left\{-\frac{ \langle a,\a+\1 \rangle }{l(a)};
\a\in\Gamma_+(\varphi), a\in\tilde{\Sigma}^{(1)}\right\}\\
&=\max\left\{\beta(\a);\a\in\Gamma_+(\varphi)\right\}\\
&=\max\left\{-\frac{1}{d(f,x^{\a})};
\a\in\Gamma_+(\varphi)\right\}\\
&=\max\left\{-\frac{1}{d(f,x^{\a})};
\a\in\Gamma(\varphi,f)\right\}
=-\frac{1}{d(f,\varphi)}.
\end{split}
\label{eqn:5.21}
\end{equation}
On the other hand, when $f$ is convenient, it is easy 
to see $\tilde{\Sigma}^{(1)}=\Sigma^{(1)}\cap \N^n$, 
which implies the first equality in (\ref{eqn:5.21}).
Note that $\tilde{\Sigma}^{(1)}\supset\Sigma^{(1)}\cap \N^n$
in general case.
From (\ref{eqn:5.20}) and (\ref{eqn:5.21}), we see that 
the poles of $Z_{\pm}(s)$
are contained in the set in the theorem. 

Finally, consider the orders of 
the poles of $Z_{\pm}(s)$
at $s=-1/d(f,\varphi)$. 
Considering the construction of simplicial subdivision
in the beginning of the proof, we see that 
for each $\sigma\in\Sigma^{(n)}$ there exists 
a vertex $\alpha_{\sigma}$ of $\Gamma_+(\varphi)$ 
such that 
$ \langle a^j(\sigma),\alpha_{\sigma} \rangle =\tilde{l}(a^j(\sigma))$
for $j=1,\ldots,n$. 
Therefore, for $\sigma\in\Sigma^{(n)}$, we have
\begin{equation}
\begin{split}
&\quad \card\left\{j;
-\frac{\tilde{l}(a^j(\sigma))+ \langle a^j(\sigma) \rangle }{l(a^j(\sigma))}
=-\frac{1}{d(f,\varphi)}
\right\}\\
&=\card\left\{j;d(f,\varphi)
( \langle a^j(\sigma),\a_{\sigma} \rangle + \langle a^j(\sigma) \rangle )
=l(a^j(\sigma))\right\}\\
&=\card\left\{j; \langle a^j(\sigma),d(f,\varphi)(\a_{\sigma}+\1) \rangle 
=l(a^j(\sigma))\right\}\\
&=\card\left\{j;d(f,\varphi)(\a_{\sigma}+\1)\in
H(a^j(\sigma),l(a^j(\sigma)))\right\}\\
&=\rho_f(d(f,\varphi)(\a_{\sigma}+\1)).
\end{split}
\end{equation}
The last equality follows from the last part of the proof 
of Proposition~5.4. 
Since $\rho_f(d(f,\varphi)(\a_{\sigma}+\1))\leq m(f,\varphi)$, 
we obtain the estimate of the orders of the poles 
in Theorem~5.10
by the same argument as that in Proposition~5.4.

It is easy to show the case of $Z(s)$ 
by using the relationship:
$Z(s)=Z_+(s)+Z_-(s)$.
\end{proof}
\begin{remark}
In the beginning of the above proof, we 
constructed a simplicial subdivision $\Sigma$ 
from a more complicated process. 
Before estimating the orders of poles, 
the usual subdivision as in Theorem~5.1 is sufficient.
\end{remark}

Next, when $d(f,\varphi)>1$, 
we consider the coefficients of  
$(s+1/d(f,\varphi))^{-m(f,\varphi)}$ 
in the Laurent expansions of 
$Z_{\pm}(s)$ and $Z(s)$. 
Let 
\begin{equation}
\begin{split} 
&
C_{\pm}=\lim_{s\to-1/d(f,\varphi)} 
(s+1/d(f,\varphi))^{m(f,\varphi)} 
Z_{\pm}(s), \\ 
&
C=\lim_{s\to-1/d(f,\varphi)} 
(s+1/d(f,\varphi))^{m(f,\varphi)} 
Z(s),
\end{split}
\label{eqn:5.22}
\end{equation}
respectively.

\begin{theorem}
Suppose that  
{\rm (i)} 
$f$ is convenient and nondegenerate over $\R$ 
with respect to its Newton polyhedron,
{\rm (ii)}  
$\varphi_{\Gamma_0}$ is nonnegative 
$($resp. nonpositive$)$
on a neighborhood of the origin and 
{\rm (iii)} 
$d(f,\varphi)>1$. 
If the support of $\chi$ is
contained in a sufficiently small neighborhood 
of the origin, 
then $C_+$ and $C_-$ 
are nonnegative $($resp. nonpositive$)$ and 
$C=C_{+}+C_{-}$ is positive $($resp. negative$)$.  
\end{theorem}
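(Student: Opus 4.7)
The plan is to adapt the residue extraction argument from the proof of Theorem~5.7 to the present setting, with the sign hypothesis on $\varphi_{\Gamma_0}$ replacing the evenness of $p$ used there. I would work in the simplicial subdivision $\Sigma$ constructed in the proof of Theorem~5.10 and start from the decomposition $Z_\pm(s)=\sum_{\sigma\in\Sigma^{(n)}} Z_\pm^{(\sigma)}(s)$ provided by (\ref{eqn:5.19}).

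The first step is to isolate the cones that contribute to the pole of maximal order. The proof of Theorem~5.10 exhibits for each $\sigma\in\Sigma^{(n)}$ a vertex $\alpha_\sigma$ of $\Gamma_+(\varphi)$ with $\langle a^j(\sigma),\alpha_\sigma\rangle=\tilde{l}(a^j(\sigma))$ for all $j$, and shows that the order of the pole of $Z_\pm^{(\sigma)}(s)$ at $s=-1/d(f,\varphi)$ is at most $\rho_f(d(f,\varphi)(\alpha_\sigma+\1))$. This bound equals $m(f,\varphi)$ exactly when $\alpha_\sigma$ lies in the relative interior of one of the faces $\gamma_j\subset\Gamma_0$ from Proposition~3.3. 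Let $\Sigma_*^{(n)}$ denote this collection of cones and, for $\sigma\in\Sigma_*^{(n)}$, let $\gamma_{j(\sigma)}$ denote the corresponding face.

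Next I would extract the leading coefficient by the same procedure as in the proof of Theorem~5.7: partition the support with cut-off functions $\psi_k$ (on which $f_\sigma$ has constant sign) and $\eta_l$ (supported near $\{f_\sigma=0\}$), decompose by orthants in $y$, and apply Lemma~5.6 to the variables $y_j$ with $j$ in the maximal index set $A(\sigma)$ for $\sigma\in\Sigma_*^{(n)}$. The hypothesis $d(f,\varphi)>1$ ensures $-1/d(f,\varphi)\notin(-\N)$, so the extra poles on $-\N$ that can arise from $J^{(l)}_{\sigma,\pm}$-type terms do not interfere, and the deformation argument shrinking the $\eta_l$-supports reduces the residue to a finite sum of integrals over $D(\sigma)$ of $|f_\sigma|^{-1/d(f,\varphi)}\varphi_\sigma(\hat y)$ times positive factors and an orthant sign. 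The key identification is that $\varphi_\sigma|_{D(\sigma)}$, read back in the $x$-chart via $\pi(\sigma)$, is precisely $\varphi_{\gamma_{j(\sigma)}}$: by Lemma~5.11 and the defining property of $\alpha_\sigma$, the only monomials of $\varphi$ that survive the substitution $y_j=0$ for $j\in A(\sigma)$ are those whose exponents lie in $\gamma_{j(\sigma)}$. By Proposition~3.6, hypothesis~(ii) then forces $\varphi_{\gamma_{j(\sigma)}}$ to have the prescribed sign, so each contribution to $C_\pm$ is nonnegative (resp.\ nonpositive). Strict positivity (resp.\ negativity) of $C=C_++C_-$ then follows because $E(+,\sigma)$ is always nonempty, at least one contribution does not vanish (the integrand is strictly positive on an open set), and all contributions share the same sign so no cancellation is possible.

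The main obstacle I anticipate is the sign bookkeeping: when $\alpha_\sigma$ is not componentwise even, the parity factors $(-1)^{g_{\sigma,\alpha_\sigma}(\epsilon)}$ arising in the orthant decomposition no longer cancel termwise as they did in Theorem~5.7, and one must instead verify, after changing variables back through $\pi(\sigma)$, that the orthant sum reassembles into a genuine integral of $|f|^{-1/d(f,\varphi)}\varphi_{\gamma_{j(\sigma)}}$ over a piece of $\R^n$ on which the sign is governed by the (nonnegative or nonpositive) face polynomial $\varphi_{\gamma_{j(\sigma)}}$. Once this identification and the sign tracking are rigorously in place, the theorem follows by the same positivity argument as in Theorem~5.7.
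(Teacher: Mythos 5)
Your general strategy (extract the Laurent coefficient via the toric resolution, reduce to the face polynomial on $\Gamma_0$, use quasihomogeneity to propagate the sign hypothesis) is in the right spirit, but there is a genuine gap exactly at the point you yourself flag, and it is not a detail that can be waved away. The paper does not try to identify $\varphi_\sigma\big|_{D(\sigma)}$ with $\varphi_{\gamma_{j(\sigma)}}$ inside the resolved chart. Instead it first decomposes the amplitude in the $x$-variables as
\[
\varphi=\sum_{\mu=1}^{l}\varphi_{\gamma_\mu}+\varphi_{\Gamma(\varphi,f)\setminus\Gamma_0}+\varphi_{l+2},
\]
and treats the pieces separately. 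The remainder $\varphi_{l+2}$ has $d(f,\varphi_{l+2})<d(f,\varphi)$, so by Theorem~5.10 it contributes no pole at $s=-1/d(f,\varphi)$; the piece $\varphi_{\Gamma(\varphi,f)\setminus\Gamma_0}$ contributes a pole of order strictly less than $m(f,\varphi)$; and only the polynomial pieces $\varphi_{\gamma_\mu}$ survive in the top-order coefficient. This decomposition is the step that makes the sign bookkeeping disappear: for a quasihomogeneous polynomial $\varphi_{\gamma_\mu}$ that is nonnegative near $0$, one has $\varphi_{\gamma_\mu}\geq 0$ globally, hence $\Phi_{\mu,\epsilon}(y,s)\geq 0$ in every orthant $\epsilon$, and the identity (5.26) absorbs the orthant parity factors $(-1)^{g_{\sigma,\alpha}(\epsilon)}$ into evaluating $\Phi_{\mu,\epsilon}$ at a point $\tilde y$ with $\pm1$ entries. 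There is simply nothing left to cancel.

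Your route keeps $\varphi$ intact, pulls it back to $\varphi_\sigma$ via (5.19), and then tries to recover $\varphi_{\gamma_{j(\sigma)}}$ by setting $y_j=0$ for $j\in A(\sigma)$. Two things go wrong. First, $\varphi_\sigma$ is only $C^\infty$ (not a polynomial), and its restriction to $D(\sigma)$ is not a priori identifiable with a face polynomial of $\Gamma_+(\varphi)$ without an argument controlling the Taylor remainder; the paper avoids this precisely by splitting off the polynomial pieces in the $x$-chart before resolving. Second, and more seriously, you have no analogue of (5.26): since the sign hypothesis is on $\varphi_{\Gamma_0}$ rather than on $\varphi$, the pullback $\varphi_\sigma$ has no useful global sign, and the orthant parity factors do not reassemble without having first isolated the quasihomogeneous piece. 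You correctly sense this obstacle but offer no mechanism to resolve it. A smaller point: a contributing $\alpha_\sigma$ is a vertex of $\Gamma_+(\varphi)$, so it lies in the relative interior of a face $\gamma_j$ only if $\gamma_j$ is that vertex; the right statement is simply $\alpha_\sigma\in\Gamma_0$. In short, the missing idea is the pre-decomposition of $\varphi$ into $\varphi_{\gamma_\mu}$, $\varphi_{\Gamma(\varphi,f)\setminus\Gamma_0}$, and a lower-order remainder, together with the verification that only the $\varphi_{\gamma_\mu}$ pieces contribute to the leading coefficient; once that decomposition is in place, the positivity follows exactly as in Theorem~5.7.
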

\begin{proof}
We only show the theorem in the nonnegative case in 
the assumption (ii). 
 
Let $\Sigma_0$ be the fan constructed from the Newton polyhedron 
of $f$. 
Fix a simplicial subdivision $\Sigma$ of $\Sigma_0$ and 
let $(Y_{\Sigma},\pi)$ be the real resolution 
associated with $\Sigma$ as in Section 4. 

Notice that $\Gamma(\varphi,f)$ and $\Gamma_0$ are compact sets, 
because $f$ is convenient. 
Recall that the essential set 
$\Gamma_0$ is expressed as the disjoint 
union of some finite faces 
$\gamma_1,\ldots,\gamma_l$ of $\Gamma_+(\varphi)$
and that 
the nonnegativity of $\varphi$ is equivalent to 
that of $\varphi_{\gamma_{\mu}}$ 
for $\mu=1,\ldots,l$ (see Section 3). 


We define the functions 
$\varphi_1,\ldots,\varphi_{l+2}$
as follows. 
\begin{equation}
\begin{split}
&\varphi_{\mu}(x)=\varphi_{\gamma_{\mu}} (x) \quad 
\mbox{for $\mu=1,\ldots,l$}, \\
&\varphi_{l+1}(x)=
\varphi_{\Gamma(\varphi,f)\setminus\Gamma_0}(x),\quad\quad
\varphi_{l+2}(x)=
\varphi(x)-\sum_{\mu=1}^{l+1} \varphi_{\mu}(x).
\end{split}
\label{eqn:5.23}
\end{equation}
Substituting (\ref{eqn:5.23}) into 
(\ref{eqn:5.1}), we obtain
$$
Z_{\pm}(s)=\sum_{\mu=1}^{l+2} Z_{\mu,\pm}(s),
$$
where 
$$
Z_{\mu,\pm}(s)=
\int_{\R^n} f(x)_{\pm}^s \varphi_{\mu}(x)\chi(x)dx, 
\quad\,\, \mu=1,\ldots,l+2.
$$

From Theorem~5.10, we see that 
the poles of $Z_{l+2,\pm}(s)$ are contained in the set 
$$
\left\{s\in\C;
{\rm Re}(s)\leq -\frac{1}{d(f,\varphi_{l+2})}
\right\}.
$$
Since $d(f,\varphi_{l+2})<d(f,\varphi)$, 
$Z_{l+2,\pm}(s)$ can be extended analytically to 
the region ${\rm Re}(s)\geq -1/d(f,\varphi)-\delta$
with small $\delta>0$.
Moreover, 
the order of the poles of $Z_{l+1,\pm}(s)$ 
at $s=-1/d(f,\varphi)$ 
is less than $m(f,\varphi)$ from Theorem~5.7
(or Theorem~5.10). 
It suffices to consider 
the poles of $Z_{\mu,\pm}(s)$ at $s=-1/d(f,\varphi)$ 
for $\mu=1,\ldots,l$. 

For each $\sigma\in\Sigma^{(n)}$ and 
$\mu\in\{1,\ldots,l\}$, 
there exists a set 
$A_{\mu}(\sigma)\subset\{1,\ldots,n\}$
such that 
\begin{equation}
A_{\mu}(\sigma)=\left\{
j\in B(\sigma); 
\frac{ \langle a^j(\sigma),\a+\1 \rangle }{l(a^j(\sigma))}=
\frac{1}{d(f,\varphi)} 
\right\} \subset \{1,\ldots,n\},
\end{equation}
for all $\a\in\gamma_{\mu}\cap\Z_+^n$,
where $B(\sigma)$ is as in (\ref{eqn:5.7}).   
For $\mu$, define
$$
\Sigma_{\mu}^{(n)}=\{\sigma\in\Sigma^{(n)};
\card A_{\mu}(\sigma)=m(f,\varphi)\}.
$$
By applying the argument in the proof of 
Theorem~5.1 with the condition $d(f,\varphi)>1$, 
it suffices to consider the pole at 
$s=-1/d(f,\varphi)$ of the function
\begin{equation}
I_{\mu,\sigma,\epsilon}(s)= 
\int_{\R^n}
\Phi_{\mu,\epsilon}(y,s) |f_{\sigma}(y)|^s \psi(y)dy, 
\label{eqn:5.24}
\end{equation}
with
\begin{equation}
\Phi_{\mu,\epsilon}(y,s)
=\left(\prod_{j=1}^n 
(y_j)_{\epsilon_j}^{l(a^j(\sigma))s}
\right)
\left(
\sum_{\a\in\gamma_{\mu}} c_{\a} 
\prod_{j=1}^n y_j^{\langle a^j(\sigma),\a \rangle}
\right)
\left|
\prod_{j=1}^n y_j^{ \langle a^j(\sigma) \rangle -1}
\right|, 
\label{eqn:5.25}
\end{equation}
where
$\sigma\in\Sigma_{\mu}^{(n)}$, $\epsilon\in\{+,-\}^n$, 
and $\psi(y)$ is a cut-off function on $\R(\sigma)^n$ 
satisfying that ${\rm Supp}(\psi)
\subset{\rm Supp}(\tilde{\chi}_{\sigma})$, 
where $\tilde{\chi}_{\sigma}$ is as in (\ref{eqn:5.3}), 
and its support does not intersect the set 
$\{y\in {\rm Supp}(\tilde{\chi}_{\sigma});
f_{\sigma}(y)=0\}$. 
Indeed, if $\sigma\in\Sigma^{(n)}\setminus
\Sigma_{\mu}^{(n)}$, then the order of the pole of 
$I_{\mu,\sigma,\epsilon}(s)$ is less than $m(f,\varphi)$.

A simple computation gives 
\begin{equation}
\begin{split}
&
\Phi_{\mu,\epsilon}(y,s)=
\sum_{\a\in\gamma_{\mu}} 
(-1)^{g_{\sigma,\alpha}(\epsilon)}
c_{\a}
\left(\prod_{j=1}^n 
(y_j)_{\epsilon_j}^
{l(a^j(\sigma))s+\langle a^j(\sigma),\a+\1\rangle-1}
\right)\\
&\quad\quad\quad
=\left(
\prod_{j\in A_{\mu}(\sigma)} 
(y_j)_{\epsilon_j}^{l(a^j(\sigma)) (s+1/d(f,\varphi))-1}
\right)
\Phi_{\mu,\epsilon}(\tilde{y},s), 
\end{split}
\label{eqn:5.26}
\end{equation} 
where $g_{\sigma,\alpha}(\epsilon)$ is as in (\ref{eqn:gsp})
and 
$\tilde{y}$ is defined by 
$\tilde{y}_j=\epsilon_j 1\in\{\pm 1\}$ 
for $j\in A_{\mu}(\sigma)$; 
$\tilde{y}_j=y_j$ for $j\not\in A_{\mu}(\sigma)$. 
Substituting (\ref{eqn:5.26}) into (\ref{eqn:5.24}), we have 
$$
I_{\mu,\sigma,\epsilon}(s)=
\int_{\R^n}
\left(
\prod_{j\in A_{\mu}(\sigma)} 
(y_j)_{\epsilon_j}^{l(a^j(\sigma)) (s+1/d(f,\varphi))-1}
\right)
\Phi_{\mu,\epsilon}(\tilde{y},s) |f_{\sigma}(y)|^s \psi(y)dy.
$$

First, we consider the case when $m(f,\varphi)<n$.
By applying Lemma~5.6, 
the coefficient of $(s+1/d(f,\varphi))^{-m(f,\varphi)}$  
in the Laurent expansion of $I_{\mu,\sigma,\epsilon}(s)$ is 
\begin{equation}
\frac{1}{\prod_{j\in A_{\mu}(\sigma)}l(a^j(\sigma))} 
\int_{\R^{n-m(f,\varphi)}}
\Phi_{\mu,\epsilon}(\tilde{y},-1/d(f,\varphi) )
|f_{\sigma}(\hat{y})|^{-1/d(f,\varphi)} 
\psi(\hat{y})d\hat{y},
\label{eqn:5.27}
\end{equation}
where $\hat{y}$ is defined by 
$\hat{y}_j=0$ for $j\in A_{\mu}(\sigma)$,  
$\hat{y}_j=y_j$ for $j\not\in A_{\mu}(\sigma)$ and 
$d\hat{y}=\prod_{j\not\in A_{\mu}(\sigma)} dy_j$. 
From (\ref{eqn:5.25}), 
the nonnegativity of $\varphi_{\gamma_{\mu}}$
implies that of $\Phi_{\mu,\epsilon}$. 
Moreover, since $\varphi_{\gamma_{\mu}}$ is a polynomial, 
there is an open set in ${\rm Supp}(\psi)$ such that 
$\Phi_{\mu,\epsilon}(\cdot, -1/d(f,\varphi))$  is   
positive there. 
Consider the case that 
the support of $\psi$ contains the origin. 
Then we can see that (\ref{eqn:5.27}) is positive for any 
$\sigma\in\Sigma_{\mu}^{(n)}$ and $\epsilon\in\{+,-\}^n$.  

Next, we consider the case when $m(f,\varphi)=n$. 
Then, 
$A_{\mu}(\sigma)=B(\sigma)=\{1,\ldots,n\}$. 
Since the essential set $\Gamma_0$ is a set
of vertices of $\Gamma_+(\varphi)$, 
the nonnegative assumption of $\varphi_{\Gamma_0}$ 
implies that 
each $\varphi_{\mu}$ can be expressed as a monomial
of the form: $\varphi_{\mu}(x)=c_{\alpha}x^{\alpha}$, 
where
$c_{\alpha}$ is a positive constant and 
every component of $\alpha$ is even. 
From an easy computation, 
the corresponding coefficient in this case 
is obtained as  
\begin{equation}
\frac{c_{\alpha}|f_{\sigma}(0)|^{-1/d(f,\varphi)}}
{\prod_{j=1}^n l(a^j(\sigma))}. 
\label{eqn:5.271}
\end{equation}
Of course, (\ref{eqn:5.271}) is positive 
for any $\sigma\in\Sigma_{\mu}^{(n)}$ and 
is independent of 
$\epsilon\in\{-,+\}^n$. 

Finally, 
by the same argument as that in the proof of Theorem~5.7, 
we can see the nonnegativity of $C_+$ and $C_-$  
and the positivity of $C=C_++C_-$ in the theorem. 
\end{proof}

\begin{proposition}
Suppose that 
the conditions {\rm (i)}, {\rm (ii)} in 
Theorem~$5.10$ are satisfied 
and 
{\rm (iii)} $d(f,\varphi)<1$. 
Let $1,\ldots,k_*$ be all the natural numbers strictly smaller than 
$1/d(f,\varphi)$. 
If the support of $\chi$ is
contained in a sufficiently small neighborhood 
of the origin, then
$Z_{+}(s)$ and $Z_-(s)$ have 
at $s=-1,\ldots,-k_*$ poles of order not 
higher than 1 and they do not have other poles 
in the region ${\rm Re}(s)>-1/d(f,\varphi)$.    
Moreover, let $a_k^{+}$, $a_k^-$ 
be the residues of 
$Z_{+}(s)$, $Z_-(s)$ at $s=-k$, respectively,  
then we have $a_k^+=(-1)^{k-1}a_k^-$ for $k=1,\ldots,k_*$.
\end{proposition}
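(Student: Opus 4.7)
The plan is to imitate the proof of Proposition~5.8, replacing the monomial argument with the toric decomposition from the proof of Theorem~5.10. First, decompose
$$Z_\pm(s)=\sum_{\sigma\in\Sigma^{(n)}} Z^{(\sigma)}_\pm(s),$$
with each $Z^{(\sigma)}_\pm(s)$ of the form (\ref{eqn:5.19}); then partition each chart using cut-off functions $\{\psi_k\}$ supported where $f_\sigma$ is nowhere zero, and $\{\eta_l\}$ supported near the zero set of $f_\sigma$, as in the proof of Theorem~5.1. This yields $Z^{(\sigma)}_\pm(s)=\sum_k I^{(k)}_{\sigma,\pm}(s)+\sum_l J^{(l)}_{\sigma,\pm}(s)$.

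For the $I^{(k)}_{\sigma,\pm}(s)$ pieces, Lemma~5.2 places all poles inside the first set appearing in Theorem~5.10, every element of which has real part at most $-1/d(f,\varphi)$. Hence these contribute nothing in the region $\mathrm{Re}(s)>-1/d(f,\varphi)$. For the $J^{(l)}_{\sigma,\pm}(s)$ pieces, I would invoke the last assertion of Proposition~4.2: on the support of $\eta_l$, the nondegeneracy of $f$ allows us to rectify $f_\sigma$ by a local real-analytic change of coordinates so that one new coordinate, call it $y_k$, is a defining function for the zero locus of $f_\sigma$. This produces the analogue of (\ref{eqn:5.71}), and after the sign decomposition (\ref{eqn:Ikspm}) one arrives at an expression of the form
$$J^{(\epsilon)}_{\sigma,l}(s)=\pm\int_{\R^n}(y_k)^s_{\epsilon_k}\prod_{j\in B_l(\sigma)}(y_j)^{l(a^j(\sigma))s+\tilde{l}(a^j(\sigma))+\langle a^j(\sigma)\rangle-1}_{\epsilon_j}\,\hat\eta_l(y)\,dy,$$
where the smooth cut-off has already absorbed the $\varphi_\sigma$ factor from (\ref{eqn:5.19}).

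The third step is to reduce, just as in the monomial case, to the auxiliary integrals $g_\pm(s)=\int(y_1)^s_\pm\prod_{j\in B}(y_j)^{l_j s+m_j-1}_{\epsilon_j}\eta(y)\,dy$, with $l_j=l(a^j(\sigma))$ and $m_j=\tilde{l}(a^j(\sigma))+\langle a^j(\sigma)\rangle$. The crucial inequality is $m_j/l_j\ge 1/d(f,\varphi)>k_*$, which follows from the maximum characterization in Theorem~5.10 together with hypothesis (iii). Consequently, at each $s=-k$ with $1\le k\le k_*$ the exponent $l_j s+m_j-1=-l_jk+m_j-1$ is nonnegative, so only the factor $(y_1)^s_\pm$ can produce a pole there; by Lemma~5.2 this pole is simple, and no other poles lie in $\mathrm{Re}(s)>-1/d(f,\varphi)$.

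The fourth step is the residue identity: applying Lemma~5.6 to $(y_1)^s_\pm$ gives
$$\lim_{s\to-k}(s+k)g_\pm(s)=\frac{(\pm 1)^{k-1}}{(k-1)!}\int_{\R^{n-1}}\prod_{j\in B}(y_j)^{-l_jk+m_j-1}_{\epsilon_j}\,\frac{\partial^{k-1}\eta}{\partial y_1^{k-1}}(0,y_2,\ldots,y_n)\,dy_2\cdots dy_n,$$
so $C_k^+=(-1)^{k-1}C_k^-$. Summing over all $\sigma$, $l$, $\epsilon$ then yields $a_k^+=(-1)^{k-1}a_k^-$. I do not anticipate a genuine obstacle; the only real care needed is to verify that $\tilde l(a^j(\sigma))+\langle a^j(\sigma)\rangle$ plays exactly the structural role of $\langle a^j(\sigma),p+\1\rangle$ in Proposition~5.8, so that the strict inequality $m_j/l_j>k_*$ follows from $d(f,\varphi)<1$ via the convenience/analyticity hypothesis (ii) of Theorem~5.10.
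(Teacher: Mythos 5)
Your proof is correct and follows exactly the route the paper intends: the paper's own proof of this proposition is the single remark that "the difference between (5.3) and (5.19) does not essentially affect the argument in the proof of Proposition 5.8," and you have supplied precisely the details — the chart-and-sign decomposition, absorption of the extra $\varphi_\sigma$ factor into the cut-off, the key inequality $m_j/l_j \ge 1/d(f,\varphi) > k_*$ via (5.21), and the residue computation from Lemma 5.6 — that make that remark precise.
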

\begin{proof}
The difference between (\ref{eqn:5.3}) and (\ref{eqn:5.19}) 
does not essentially affect 
the argument in the proof of Proposition~5.8 . 
The details are left to the readers. 
\end{proof}
\subsection{Remarks}
In this subsection, let us consider Theorem~5.7 
(with Remark~5.9) and Theorem~5.13
under the additional assumption: 
$f$ is nonnegative or nonpositive near the origin. 
The following theorem shows that 
the same assertions can be obtained 
without the assumption: $d(f,\varphi)>1$.  
\begin{theorem}
Suppose that 
{\rm (i)} 
$f$ is nondegenerate over $\R$ with respect to its
Newton polyhedron, 
{\rm (ii)}
$f$ is nonnegative or nonpositive 
on a neighborhood of the origin and 
{\rm (iii)}
at least one of the following condition is satisfied:
\begin{enumerate}
\item[(a)] 
$\varphi$ is expressed as $\varphi(x)=x^p\tilde{\varphi}(x)$
on a neighborhood of the origin, 
where every component of $p\in \Z_+^n$ is even
and 
$\tilde{\varphi}(0)> 0$
$($resp. $\tilde{\varphi}(0)< 0)$;
\item[(b)] 
$f$ is convenient and 
$\varphi_{\Gamma_0}$ is nonnegative 
$($resp. nonpositive$)$
on a neighborhood of the origin.
\end{enumerate}
If the support of $\chi$ is
contained in a sufficiently small neighborhood 
of the origin,  
then $C_+$ and $C_-$ 
are nonnegative $($resp. nonpositive$)$ and 
$C=C_{+}+C_{-}$ is positive $($resp. negative$)$, 
where $C_{\pm}$, $C$ are as in $(\ref{eqn:5.22})$.  
\end{theorem}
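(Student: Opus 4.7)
The plan has two phases: a reduction using the sign assumption on $f$, then an adaptation of the coefficient computations from the proofs of Theorems~5.7 and 5.13 to the case in which the hypothesis $d(f,\varphi)>1$ is dropped.

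Without loss of generality I treat the nonnegative case. If $\mathrm{Supp}(\chi)$ is small enough that $f\geq 0$ there, then $f(x)_{-}\equiv 0$ on $\mathrm{Supp}(\chi)$, whence $Z_{-}(s)\equiv 0$, $C_{-}=0$ (nonnegative), and $Z(s)=Z_{+}(s)$, $C=C_{+}$. It remains to show that the pole of $Z_{+}$ at $s=-1/d(f,\varphi)$ has order exactly $m(f,\varphi)$ and that $C_{+}>0$.

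Using the toric resolution $(Y_{\Sigma},\pi)$ of Section~4, I would split $Z_{+}^{(\sigma)}(s)=\sum_{k}I_{\sigma,+}^{(k)}(s)+\sum_{l}J_{\sigma,+}^{(l)}(s)$ as in the proofs of Theorems~5.1, 5.7, 5.10, and 5.13. The $I^{(k)}$-contributions are treated exactly as in those earlier proofs: under case (a), the evenness of the components of $p$ and positivity of $\tilde\varphi(0)$ make each sign factor $(-1)^{g_{\sigma,p}(\epsilon)}$ equal to $1$; under case (b), the decomposition $\varphi=\sum_{\mu}\varphi_{\gamma_{\mu}}+(\text{lower order})$ together with Proposition~3.6 gives $\varphi_{\gamma_{\mu}}\geq 0$. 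In either case each analog of the quantity $G_{u,v}(\sigma)$ in (5.16) or (5.27) is nonnegative, and the $\epsilon=(+,\dots,+)$ summand from any $\sigma\in\Sigma_{p}^{(n)}$ (resp.~$\Sigma_{\mu}^{(n)}$) is strictly positive, so the $I^{(k)}$-piece of $C_{+}$ is strictly positive.

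The main obstacle is the $J^{(l)}$-contributions, which in the earlier theorems were excluded from the leading coefficient via the hypothesis $d(f,\varphi)>1$. Here I would exploit the nonnegativity of $f$ instead: near a zero of $f_{\sigma}$, Proposition~4.2 allows me to write $f_{\sigma}(y)=y_{k}\,u(y)$ with $u$ a smooth positive unit, so that
$$
f(\pi(\sigma)(y))=\Bigl(\prod_{j}y_{j}^{l(a^{j}(\sigma))}\Bigr)\,y_{k}\,u(y)\geq 0
$$
forces $y_{k}$ to have a fixed sign on each orthant of the remaining variables. The effective $y_{k}$-integration in each $J_{\sigma,+}^{(l)}$ is then one-sided, yielding only simple Mellin poles at $s\in-\N$. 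Combining with the other monomial factors, the $J^{(l)}$-contribution to the pole at $s=-1/d(f,\varphi)$ has order at most $m(f,\varphi)$ rather than $m(f,\varphi)+1$; in particular $C_{+}$ is finite, and the same sign analysis shows its $J^{(l)}$-piece is nonnegative. Adding the strictly positive $I^{(k)}$-contribution gives $C_{+}>0$, as required. The delicate step is the careful verification, via a partition of unity on $Y_{\Sigma}$ adapted to the zero set of $f_{\sigma}$, that these one-sided Mellin integrals really cap the pole order at $m(f,\varphi)$.
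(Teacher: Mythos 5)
Your reduction to $Z_{-}\equiv 0$, $C_{-}=0$ and the treatment of the $I^{(k)}$-contributions are sound and agree with the paper. The problem is your handling of the $J^{(l)}$-contributions, which is the crux of this theorem.

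Your claim is that near a zero of $f_{\sigma}$, the constraint $f\circ\pi(\sigma)\geq 0$ forces $y_{k}$ to have a fixed sign, so the $y_{k}$-integration is one-sided and this caps the pole order at $m(f,\varphi)$. This does not work: a one-sided Mellin integral $\int_{0}^{\infty}y_{k}^{s}\,\psi(y_{k})\,dy_{k}$ with $\psi(0)\neq 0$ still has simple poles at all of $s\in-\N$, exactly as the two-sided version does. The possible jump to order $m(f,\varphi)+1$ in Proposition~5.4 comes from that $y_{k}^{s}$ factor combining with the $m(f,\varphi)$ factors of the form $y_{j}^{l(a^{j})s+\cdots}$; restricting to a half-line does nothing to suppress it. So your stated mechanism for controlling the pole order is not valid, and the "delicate step" you flag at the end is in fact an unfilled gap.

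You do, however, have the right observation in hand without pushing it to its correct conclusion. After the local coordinate change near a zero of $f_{\sigma}$, the $J$-term cut-off $\hat{\eta}_{l}$ satisfies $\hat{\eta}_{l}(0)\neq 0$, so its support contains a full neighborhood of the new origin, where $y_{k}$ takes \emph{both} signs. But all $l(a^{j}(\sigma))$ are even when $f\geq 0$, so $\prod_{j}y_{j}^{l(a^{j}(\sigma))}\geq 0$ there, and $f\circ\pi(\sigma)\geq 0$ then forces $y_{k}\geq 0$ on that neighborhood. This is a contradiction: it shows that such a zero of $f_{\sigma}$ cannot exist in the first place near $\pi(\sigma)^{-1}(0)$. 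That is precisely the paper's argument (phrased there via Proposition~4.2 and the nondegeneracy condition: if $f_{\sigma}(y_{0})=0$ with $y_{0}\in T_{I}$, $I\neq\emptyset$, $\pi(\sigma)(T_{I})=0$, then nondegeneracy forces $f_{\sigma}$ to take negative values near $y_{0}$, contradicting $f\geq 0$). The upshot is that for $\mathrm{Supp}(\chi)$ sufficiently small, $f_{\sigma}$ is strictly positive on the entire relevant region, so the $J^{(l)}$-terms simply do not appear, and the remaining $I^{(k)}$-analysis goes through verbatim as in Theorems~5.7 and~5.13. You need to replace the "one-sided caps the pole order" step with this vanishing of the $J$-terms outright.
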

\begin{proof}
We only show the case that 
$f$ is nonnegative in the assumption (ii). 

Let $\Sigma_0$ be the fan constructed from the Newton polyhedron 
of $f$. 
Fix a simplicial subdivision $\Sigma$ of $\Sigma_0$ and 
let $(Y_{\Sigma},\pi)$ be the real resolution 
associated with $\Sigma$ as in Section 4. 
Let $\sigma\in \Sigma^{(n)}$. 
First, 
From Proposition~4.2 and the assumptions (i), (ii), 
there exists a neighborhood $U_{\sigma}$ of the origin
such that $f(x)\geq 0$ for $x\in U_{\sigma}$ and 
$$
f(\pi(\sigma)(y))=
\left(\prod_{j=1}^n y_j^{l(a^j(\sigma))}\right)
f_{\sigma}(y) \quad 
\mbox{for $y\in\pi(\sigma)^{-1}(U_{\sigma})$,}
$$
where $f_{\sigma}$ is real analytic and 
$f_{\sigma}(0)>0$.
It is easy to see that 
all $l(a^j(\sigma))$ are even and that 
$f_{\sigma}(y)\geq 0$
for $y\in\pi(\sigma)^{-1}(U_{\sigma})$. 

Now, let us assume that there exists a point 
$y_0\in T_I\cap \pi(\sigma)^{-1}(U_{\sigma})$ 
with nonempty $I\subset \{1,\ldots,n\}$ 
such that $f_{\sigma}(y_0)=0$. 
(See (\ref{eqn:4.3})  
for the definition of $T_I$.) 
Since $f$ is nondegenerate, 
Proposition~4.2 implies that there is a point 
$y_* \in \pi(\sigma)^{-1}(U_{\sigma})$ 
near $y_0$ 
such that $f_{\sigma}(y_*)<0$.
This contradicts the nonnegativity 
of $f$ on $U_{\sigma}$, 
so we see that  
$\{
y\in\pi(\sigma)^{-1}(U_{\sigma});f_{\sigma}(y)=0
\}\subset (\R\setminus\{0\})^n$.

Therefore,
there exists a small neighborhood 
$V_{\sigma}\subset U_{\sigma}$ of the origin such that
$\{y;f_{\sigma}(y)=0\}\cap \pi(\sigma)^{-1}(V_{\sigma})
=\emptyset$.
Then  $f_{\sigma}$ is positive on 
$\pi(\sigma)^{-1}(V_{\sigma})$ for each 
$\sigma\in\Sigma^{(n)}$. 

Let us investigate the properties of 
poles of $Z_{\pm}(s)$ in (\ref{eqn:5.1}), 
where the cut-off function $\chi$ has a support 
contained in the set 
$V:=\bigcap_{\sigma\in\Sigma^{(n)}}V_{\sigma}$.
We apply the arguments 
in the proofs of Theorems~5.7 and 5.13
to these cases. 
In the process of analysis, 
the decomposition similar to (\ref{eqn:5.4})
is obtained, but the functions corresponding to 
$J_{\sigma,\pm}^{(k)}(s)$ do not appear 
because $f_{\sigma}$ is always positive or negative on $V$. 
Thus, since it suffices to consider the poles of 
$I_{\sigma,\pm}^{(k)}(s)$, 
we can easily obtain the assertions of this theorem.  
\end{proof}

\subsection{Certain symmetrical properties}

We denote by 
$\beta_{\pm}(f,\varphi)$, $\hat{\beta}(f,\varphi)$ 
the largest poles 
of $Z_{\pm}(s)$, $Z(s)$ and 
by $\eta_{\pm}(f,\varphi)$, $\hat{\eta}(f,\varphi)$ 
their orders, 
respectively.
\begin{theorem}
Let $f,\varphi$ be nonnegative or nonpositive real analytic
functions defined on a neighborhood of the origin. 
Suppose that $f$ and $\varphi$ are 
convenient and nondegenerate over $\R$ with respect to 
their Newton polyhedra. 
If the support of $\chi$ is
contained in a sufficiently small neighborhood 
of the origin, then we have 
\begin{equation}
\b_{\pm}(x^{\1}f,\varphi)\b_{\pm}(x^{\1}\varphi,f)\leq 1 
\mbox{ and } 
\hat{\b}(x^{\1}f,\varphi)\hat{\b}(x^{\1}\varphi,f)\leq 1
\label{eqn:5.28}
\end{equation}
Moreover, the following two conditions are equivalent: 
\begin{enumerate}
\item The equality holds in each estimate 
in $(\ref{eqn:5.28})$;
\item There exists a positive rational number $d$ such that 
$\Gamma_+(x^{\1} f)=d\cdot\Gamma_+(x^{\1} \varphi)$.
\end{enumerate}
If the condition {\rm (i)} or {\rm (ii)} is satisfied, 
then we have 
$\eta_{\pm}(x^{\1}f,\varphi)=\eta_{\pm}(x^{\1}\varphi,f)
=\hat{\eta}(x^{\1}\varphi,f)=n$.
\end{theorem}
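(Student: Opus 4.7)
The plan is to combine Theorem~5.10 (upper bound on the largest pole) with a positivity argument in the spirit of Theorem~5.13 to obtain the exact identities $\beta_\pm(x^{\1}f,\varphi)=-1/d_1$ and $\beta_\pm(x^{\1}\varphi,f)=-1/d_2$, with $d_1:=d(x^{\1}f,\varphi)$ and $d_2:=d(x^{\1}\varphi,f)$, and then to read the theorem off from a short convexity lemma on Newton polyhedra. The first preparatory step is to verify that $x^{\1}f$ is nondegenerate over $\R$ with respect to $\Gamma_+(x^{\1}f)$. Every compact face of $\Gamma_+(x^{\1}f)$ has the form $\gamma+\1$ with $\gamma$ a compact face of $\Gamma_+(f)$, and $(x^{\1}f)_{\gamma+\1}=x^{\1}f_\gamma$. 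If $\nabla(x^{\1}f_\gamma)(x)=0$ for some $x$ with $x_1\cdots x_n\neq 0$, then Euler's identity for the quasi-homogeneous polynomial $x^{\1}f_\gamma$ (of weighted degree $c+\langle a\rangle$ relative to the strictly positive supporting covector $a$ of $\gamma$, with $c=\langle a,\alpha\rangle>0$ for $\alpha\in\gamma\subset\R_+^n\setminus\{0\}$) forces $x^{\1}f_\gamma(x)=0$, whence $f_\gamma(x)=0$, and substituting back into $\partial_j(x^{\1}f_\gamma)(x)=0$ gives $\nabla f_\gamma(x)=0$, contradicting the nondegeneracy of $f$. The same reasoning yields nondegeneracy of $x^{\1}\varphi$. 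Because $\varphi$ is convenient and real analytic, Theorem~5.10 (via condition (ii)(b) or (c)) yields $\beta_\pm(x^{\1}f,\varphi)\leq -1/d_1$ and $\beta_\pm(x^{\1}\varphi,f)\leq -1/d_2$.

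I next establish $d_1d_2\geq 1$, with equality iff $\Gamma_+(f)+\1=d_1(\Gamma_+(\varphi)+\1)$. Chaining the defining inclusions gives
\begin{equation*}
d_1d_2(\Gamma_+(f)+\1)\subset d_1(\Gamma_+(\varphi)+\1)\subset\Gamma_+(f)+\1,
\end{equation*}
and since every vertex of $\Gamma_+(f)+\1$ has each component $\geq 1$, the inclusion $c\cdot(\Gamma_+(f)+\1)\subset\Gamma_+(f)+\1$ forces $c\geq 1$; the extremal case $d_1d_2=1$ makes both chained inclusions equalities, which is exactly condition~(ii) of the theorem. In this equality case $\Gamma(\varphi,x^{\1}f)=\partial\Gamma_+(\varphi)$, and the image $d_1(v+\1)$ of any vertex $v$ of $\Gamma_+(\varphi)$ is a vertex of $\Gamma_+(x^{\1}f)$, so $m(x^{\1}f,\varphi)=n$ and symmetrically $m(x^{\1}\varphi,f)=n$.

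The main step is to upgrade the bounds $\beta_\pm\leq-1/d_i$ to equalities, using the nonnegativity of $f$ and $\varphi$. I decompose $Z_\pm(s;x^{\1}f,\varphi)$ into octant contributions by substituting $x_i=\epsilon_iy_i$: each octant with $\prod_i\epsilon_i=\pm 1$ gives an integral over $\R_+^n$ of the form $\int_{\R_+^n}(y^{\1}f(\epsilon y))^s\varphi(\epsilon y)\chi(\epsilon y)\,dy$, in which both $y^{\1}f(\epsilon y)$ and $\varphi(\epsilon y)$ are nonnegative near the origin. Running the toric resolution of $\Gamma_+(x^{\1}f)$ on each such integral, the pulled-back phase factorizes on every chart with a strictly positive leading factor $f_\sigma$ at the chart origin (because $f(\epsilon\cdot)\geq 0$ together with $f_\sigma(0)\neq 0$ from Proposition~4.2), so the $J^{(l)}_{\sigma,\pm}$-type contributions that appear in the proof of Theorem~5.1 are absent and the residue computation of Theorem~5.13 applies verbatim: each chart contribution to the coefficient of $(s+1/d_1)^{-m(x^{\1}f,\varphi)}$ is nonnegative (using $\varphi_{\Gamma_0}\geq 0$ from Lemma~3.4), with strict positivity on a chart whose support meets the origin. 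Consequently $\beta_\pm(x^{\1}f,\varphi)=-1/d_1$ exactly, and analogously $\beta_\pm(x^{\1}\varphi,f)=-1/d_2$; the statement for $\hat\beta$ follows from $Z=Z_++Z_-$ together with $C_+,C_-\geq 0$ and $C_++C_->0$. Combining with the convexity step gives $\beta_\pm\cdot\beta_\pm=\hat\beta\cdot\hat\beta=1/(d_1d_2)\leq 1$, with equality iff the Newton polyhedra are proportional. In the equality case Theorem~5.10's pole-order bound $\min\{n+1,n\}=n$ is attained by the same positivity argument, giving $\eta_\pm=\hat\eta=n$. The principal obstacle is this last paragraph: neither Theorem~5.13 nor Theorem~5.15 applies directly to the pair $(x^{\1}f,\varphi)$, since $x^{\1}f$ is neither convenient nor of fixed sign, so one must redo the residue bookkeeping octant by octant and verify that on each positive-octant integral the factorization on every toric chart retains the positive leading factor $f_\sigma$ needed to reuse Theorem~5.13's positivity calculation.
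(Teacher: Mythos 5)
Your proposal follows the same skeleton as the paper's argument: you prove nondegeneracy of $x^{\1}f$ via Euler's identity (this is exactly the paper's Lemma~5.17 with $p=\1$), you prove $d_1d_2\geq 1$ with equality iff proportionality (the paper's Lemma~5.19), and you use a positivity argument on the residue at $s=-1/d_i$ to upgrade the one-sided bound from Theorem~5.10 to an equality. The conclusion $m(x^{\1}f,\varphi)=n$ in the equality case is also obtained the same way. So the route is not genuinely different; what differs is the level of care in one step, and you are right to flag it.

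Concretely: the paper simply writes ``From the assumptions\dots and Lemma~5.17 with $p=\1$, Theorem~5.13 implies the equations $\beta(x^{\1}f,\varphi)=-1/d(x^{\1}f,\varphi)$\dots,'' but Theorem~5.13 as stated requires the \emph{phase} to be convenient and $d>1$, and Theorem~5.15 requires the phase to be of fixed sign. Neither hypothesis holds for $x^{\1}f$: the Newton diagram of $x^{\1}f$ sits in $\{\alpha_j\geq 1\;\forall j\}$ and thus misses every coordinate axis, and $x^{\1}$ changes sign. Your octant decomposition $x=\epsilon y$ with $y\in\R_+^n$ is exactly the right repair. Two small remarks for completeness. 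First, on each octant with $\prod\epsilon_i=+1$ you get a positive-orthant integral of $\bigl(y^{\1}f(\epsilon y)\bigr)^s\varphi(\epsilon y)\chi(\epsilon y)$, and the toric resolution of $\Gamma_+(x^{\1}f)$ applies with $l(a)\geq\langle a\rangle>0$ for all $a\neq 0$, which is the only way ``convenient'' actually enters the resolution; so the residue bookkeeping of Theorem~5.13 transfers chart by chart with $f_\sigma>0$ on the relevant region, as you say. Second, and worth spelling out before publishing: for the decomposition $\varphi=\sum_\mu\varphi_{\gamma_\mu}+\cdots$ in Theorem~5.13's proof to make sense, one needs $\Gamma_0$ compact; in the paper this comes from convenience of the phase, but here it comes instead from $d_1>1$ together with convenience of $\varphi$ and $f$ (the scaled-down noncompact facets of $\d\Gamma_+(x^{\1}f)$ lie in $\{\alpha_j=1/d_1<1\}$ and so miss $\Gamma_+(\varphi)+\1$). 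A short justification of $d_1>1$ — or a separate treatment of the borderline $d_1=1$ case — would close this loop. With that, your proposal is a correct and, in the positivity step, more rigorous rendering of the paper's argument.
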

\begin{proof}
Admitting Lemmas~5.17 and~5.19 below, 
we prove the theorem as follows. 
From the assumptions in the theorem and 
Lemma~5.17 with $p=\1$,  
Theorem~5.13 implies the equations 
$\beta(x^{\1}f,\varphi)=-1/d(x^{\1}f,\varphi)$ 
and $\beta(x^{\1}\varphi,f)=-1/d(x^{\1}\varphi,f)$. 
By Lemma~5.19, these equations imply the inequality 
(\ref{eqn:5.28}) and the equivalence of (i) and (ii).
The condition (ii) in the theorem implies 
$\Gamma(\varphi, x^{\1}f)=\Gamma(\varphi)$. 
Thus, $\Gamma_0$ is the set of the vertices  
(i.e., zero-dimensional faces) of 
$\Gamma(\varphi)$. 
This means $\eta_{\pm}(x^{\1}f,\varphi)
=m(x^{\1}f,\varphi)=n$. 
Similarly, we see $\eta_{\pm}(x^{\1}\varphi,f)=n$.
\end{proof}

\begin{lemma}
Let $p\in \Z_+^n$ and $g$ be a $C^{\infty}$ function
defined on a neighborhood of the origin in $\R^n$ such that
$\Gamma(g)\neq\emptyset$. Then, 
$g$ is nondegenerate over $\R$
with respect to its Newton polyhedron 
if and only if so is $x^p g$. 
\end{lemma}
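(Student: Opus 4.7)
The plan is to exploit the fact that multiplication by $x^p$ simply translates the Newton polyhedron. Since the Taylor support satisfies $S_{x^p g} = S_g + p$, we have $\Gamma_+(x^p g) = \Gamma_+(g) + p$, and the translation $\gamma \mapsto \gamma + p$ gives a bijection between the compact faces of $\Gamma(g)$ and those of $\Gamma(x^p g)$. Under this bijection the face polynomials transform as $(x^p g)_{\gamma + p}(x) = x^p g_\gamma(x)$. Consequently the nondegeneracy of $g$ (resp.\ of $x^p g$) amounts to $\nabla g_\gamma$ (resp.\ $\nabla(x^p g_\gamma)$) being nonvanishing on $\{x \in \R^n : x_1 \cdots x_n \neq 0\}$ for every compact face $\gamma \subset \Gamma(g)$, and the lemma reduces to verifying, for each fixed such $\gamma$, the pointwise equivalence
\[
\nabla g_\gamma(x_0) \neq 0 \Longleftrightarrow \nabla (x^p g_\gamma)(x_0) \neq 0, \qquad x_0 \in (\R \setminus \{0\})^n.
\]

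The key tool is the quasi-homogeneity of the face polynomial. For a positive-dimensional compact face $\gamma$, one may select a supporting vector $a \in \R_+^n$ with all components $a_j > 0$ such that $\gamma = \Gamma_+(g) \cap H(a, l(a))$; Euler's identity then yields $\sum_j a_j x_j \partial_j g_\gamma = l(a) g_\gamma$, and in the relevant case $l(a) > 0$, so $\langle a, p \rangle + l(a) > 0$. The product rule $\partial_j(x^p g_\gamma) = p_j x^{p - e_j} g_\gamma + x^p \partial_j g_\gamma$ shows that on $(\R \setminus \{0\})^n$ the vanishing $\nabla(x^p g_\gamma)(x_0) = 0$ is equivalent to the system $p_j g_\gamma(x_0) + x_{0,j} \partial_j g_\gamma(x_0) = 0$ for all $j$. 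Taking the $a$-weighted sum of these identities and invoking Euler gives $(\langle a, p \rangle + l(a)) g_\gamma(x_0) = 0$, forcing $g_\gamma(x_0) = 0$ and then $\partial_j g_\gamma(x_0) = 0$ for every $j$; so $\nabla g_\gamma(x_0) = 0$. Conversely, if $\nabla g_\gamma(x_0) = 0$, Euler already yields $g_\gamma(x_0) = 0$, and both terms of the product-rule expression vanish at $x_0$, proving $\nabla(x^p g_\gamma)(x_0) = 0$.

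The main obstacle will be the geometric step of choosing a strictly positive supporting vector $a$ with $l(a) > 0$ for each positive-dimensional compact face; this is a standard property of Newton polyhedra inside $\R_+^n$ but must be invoked cleanly (the strict positivity $a_j > 0$ is precisely what is needed to conclude $\partial_j g_\gamma(x_0) = 0$ from $x_{0,j} \partial_j g_\gamma(x_0) = 0$, and to know $l(a) > 0$). The zero-dimensional compact faces correspond to vertices $\alpha$ of $\Gamma_+(g)$, at which both $g_\alpha$ and $(x^p g)_{\alpha + p}$ are single monomials and the equivalence is transparent, so these require only a separate direct check.
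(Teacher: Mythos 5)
Your proposal is correct and relies on exactly the same ingredients as the paper's proof: the translation $\Gamma_+(x^p g) = \Gamma_+(g) + p$ with the resulting face correspondence $(x^p g)_{\gamma+p} = x^p g_\gamma$, the product rule, and Euler's identity for the quasihomogeneous face polynomial. The only organizational difference is that you prove both implications directly by the pointwise argument (applying Euler to $g_\gamma$ after taking an $a$-weighted sum of the product-rule system), whereas the paper applies Euler to $(x^p g)_\gamma$, proves one implication, and then observes the converse follows by replacing $p$ with $-p$; both are equivalent in substance.
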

\begin{proof}
First, notice that if a $C^{\infty}$ function $\psi$ 
has a quasihomogeneous property: 
$
\psi(t^{m_1}x_1,\ldots,t^{m_n}x_n) =t^c \psi(x)
$ 
for $t>0$ and $x\in\R^n$, then 
$\nabla \psi (x)=0$ implies $\psi(x)=0$. 
In fact, this follows from Euler's identity:
\begin{equation*}
m_1x_1\frac{\partial \psi}{\partial x_1}(x)
+\cdots+
m_nx_n\frac{\partial \psi}{\partial x_n}(x)
=c\psi(x)
\end{equation*}
for $x\in\R^n$.

Now, 
we assume that $h(x):=x^p g(x)$ is not nondegenerate
in the above sense.
Then, there exist a face $\gamma$ of 
$\Gamma_+(h)$ and a point $x_0 \in(\R\setminus\{0\})^n$ 
such that $\nabla h_{\gamma}(x_0)=0$ and 
$h_{\gamma}(x_0)=0$. 
It is easy to see the existence of the face
$\tilde{\gamma}$ of $\Gamma_+(g)$ such that 
$h_{\gamma}(x)=x^p g_{\tilde{\gamma}}(x)$. 
Note that $\tilde{\gamma}+p=\gamma$.
The assumption on $h$ implies 
$\nabla g_{\tilde{\gamma}}(x_0)=0$,  
which means that $g$ is not nondegenerate.

The above argument is also 
available for $p\in (-\Z_+)^n$, 
so the converse can be shown similarly. 
\end{proof}
\begin{remark}
By observing the above proof, it might be
expected that the above lemma can be generalized 
as follow. 
``Let $g$ be the same as above and 
let a $C^{\infty}$ function $\rho$ be positive on 
$(\R\setminus\{0\})^n$. 
Then $g$ is nondegenerate in the above sense 
if and only if so is $\rho(x)g(x)$.'' 
But, this claim is not true.  
In fact, consider the two-dimensional case: 
$g(x,y)=xy$ and $\rho(x,y)=(x-y)^2+x^4$.
\end{remark}
\begin{lemma}
Let $g$,$h$ be $C^{\infty}$ functions defined 
near the origin, then 
we have 
$$d(x^{\1}g,h) d(x^{\1}h,g)\geq 1.$$
The equality holds in the above, if and only if 
there exists a positive rational number $d$ such that
$\Gamma_+(x^{\1} g)=d\cdot\Gamma_+(x^{\1} h)$.
\end{lemma}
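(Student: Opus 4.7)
The plan is to exploit a duality between the Newton polyhedra $A := \Gamma_+(x^{\1}g) = \Gamma_+(g) + \1$ and $B := \Gamma_+(x^{\1}h) = \Gamma_+(h) + \1$, both contained in $\1 + \R_+^n$. For $a \in \R_+^n \setminus \{0\}$, I will introduce the support functions $L_A(a) := \min\{\langle a,\alpha\rangle : \alpha \in A\}$ and $L_B(a) := \min\{\langle a,\beta\rangle : \beta \in B\}$; since $A, B \subset \1 + \R_+^n$, these satisfy $L_A(a), L_B(a) \geq \langle a, \1\rangle > 0$. The standard half-space representation of a Newton polyhedron gives
\begin{equation*}
A = \bigcap_{a \in \R_+^n \setminus \{0\}} \{x \in \R^n : \langle a, x\rangle \geq L_A(a)\},
\end{equation*}
and likewise for $B$, so that the containment $dB \subset A$ becomes equivalent to the family of scalar inequalities $d \cdot L_B(a) \geq L_A(a)$ for every $a \in \R_+^n \setminus \{0\}$.

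Writing $d_1 := d(x^{\1}g,h)$ and $d_2 := d(x^{\1}h,g)$, this dualization yields
\begin{equation*}
d_1 = \max_{a \in \R_+^n\setminus\{0\}} \frac{L_A(a)}{L_B(a)}, \qquad d_2 = \max_{a \in \R_+^n\setminus\{0\}} \frac{L_B(a)}{L_A(a)},
\end{equation*}
where the maxima are attained because both ratios are homogeneous of degree zero and piecewise linear in $a$, descending to continuous positive functions on the compact space of directions (in fact the maxima occur at primitive integer vectors normal to facets of $A$ or $B$). Choosing $a^*$ at which the first maximum is achieved gives at once $d_2 \geq L_B(a^*)/L_A(a^*) = 1/d_1$, so $d_1 d_2 \geq 1$.

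For the equality characterization, $d_1 d_2 = 1$ forces $L_B(a)/L_A(a) \leq 1/d_1$ for every $a$, i.e. $d_1 L_B(a) \leq L_A(a)$; combining with the reverse inequality $d_1 L_B(a) \geq L_A(a)$ that encodes $d_1 B \subset A$, one obtains $L_{d_1 B}(a) = d_1 L_B(a) = L_A(a)$ for all $a \in \R_+^n \setminus \{0\}$, and hence $d_1 B = A$ by the uniqueness of the half-space representation. Integrality of the Newton polyhedra then forces $d_1 \in \Q_+$, so that $\Gamma_+(x^{\1}g) = d \cdot \Gamma_+(x^{\1}h)$ holds with $d := d_1$. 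The converse is immediate: if $A = dB$ for some $d > 0$, then the support-function identity $L_A = d L_B$ reads off $d_1 = d$ and $d_2 = 1/d$, giving $d_1 d_2 = 1$. The only step that warrants careful attention is the half-space representation together with the positivity and attainment of the maxima; this is standard convex-geometric content for Newton polyhedra, but deserves verification because $A, B$ are unbounded and the optimization is over the cone $\R_+^n \setminus \{0\}$ rather than a compact set.
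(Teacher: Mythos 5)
Your proof is correct, and it takes a genuinely different route from the paper's. The paper argues directly with set containments: by definition, $d_1\cdot\Gamma_+(x^{\1}h)\subset\Gamma_+(x^{\1}g)$ and $d_2\cdot\Gamma_+(x^{\1}g)\subset\Gamma_+(x^{\1}h)$ (where $d_1=d(x^{\1}g,h)$, $d_2=d(x^{\1}h,g)$), and composing these gives $d_1d_2\cdot\Gamma_+(x^{\1}g)\subset\Gamma_+(x^{\1}g)$. For a Newton polyhedron $P$ sitting in $\1+\R_+^n$, a scalar $c>0$ satisfies $cP\subset P$ if and only if $c\ge 1$ (look at a vertex minimizing $\langle\1,\cdot\rangle$), and $cP\subsetneq P$ forces $c>1$; tracking strictness of the two defining inclusions then gives the equality characterization in three lines. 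You instead pass to the dual picture: you encode the containment $dB\subset A$ as the family of support-function inequalities $dL_B(a)\ge L_A(a)$, identify $d_1$ and $d_2$ as the extrema of $L_A/L_B$ and $L_B/L_A$, and read off $d_1d_2\ge1$ by comparing the two extremal problems at a common test direction. Both arguments are sound. The paper's is shorter and entirely elementary; yours is slightly more machinery but is more transparent about \emph{where} the Newton distance is realized (at facet normals), makes the equality case a direct consequence of pointwise equality of support functions and uniqueness of the half-space representation, and has the advantage of explicitly justifying the rationality of $d$ in the equality case via integrality of vertices, a point the paper's proof leaves implicit. Your cautionary note about attainment of the suprema on an unbounded cone is well placed, and your resolution (homogeneity plus restriction to facet normals) is correct.
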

\begin{proof}
From the definition of $d(\cdot,\cdot)$, 
\begin{eqnarray}
&&
d(x^{\1}g,h)
\cdot\Gamma_+(x^{\1}h)\subset
\Gamma_+(x^{\1} g),
\label{eqn:5.31}
\\
&&
d(x^{\1}h,g)
\cdot\Gamma_+(x^{\1}g)\subset
\Gamma_+(x^{\1} h).
\label{eqn:5.32}
\end{eqnarray}
Putting (\ref{eqn:5.31}),(\ref{eqn:5.32}) together, 
we have 
\begin{equation}
(d(x^{\1}g,h) d(x^{\1}h,g))
\cdot\Gamma_+(x^{\1}g)\subset
\Gamma_+(x^{\1} g).
\label{eqn:5.33}
\end{equation}
This implies 
$d(x^{\1}g,h) d(x^{\1}h,g)\geq 1$.

If there exists a positive number $d$ such that 
$\Gamma_+(x^{\1} g)=d\cdot\Gamma_+(x^{\1}h)$, 
then 
it is clear that 
$d=d(x^{\1}g,h)=1/d(x^{\1}h,g)$. 
On the other hand, if 
$\Gamma_+(x^{\1} g)\neq d\cdot\Gamma_+(x^{\1} h)$
for any $d>0$, then the inclusions 
in (\ref{eqn:5.31}), (\ref{eqn:5.32})
are in the strict sense, therefore so is the inclusion 
in (\ref{eqn:5.33}). 
Then we see that 
$d(x^{\1}g,h) d(x^{\1}h,g)>1.$
\end{proof}

\section{Proofs of the theorems in Section 2}

\subsection{Relationship between $I(\tau)$ and 
$Z_{\pm}(s)$}

It is known 
(see \cite{igu78}, \cite{kan81}, \cite{agv88}, etc.) 
that 
the study of the asymptotic behavior of the 
oscillatory integral $I(\tau)$ in (\ref{eqn:1.1})
can be reduced to an investigation of the poles
of the functions $Z_{\pm}(s)$ in (\ref{eqn:5.1}). 
Here, we explain an outline of 
these situations. 
Let $f$,$\varphi$,$\chi$ satisfy 
the conditions (A),(B),(C) in Section~2.2. 
Suppose that the support of $\chi$ is sufficiently small. 

Define the Gelfand-Leray function: 
$K:\R \to \R$ as
\begin{equation}
K(t)=\int_{W_t} \varphi(x) \chi(x) \omega, 
\label{eqn:6.3}
\end{equation}
where $W_t=\{x\in\R^n; f(x)=t\}$ and $\omega$ is 
the surface element on $W_t$ which is determined by 
$
df\wedge \omega=dx_1\wedge\cdots\wedge dx_n.
$
$I(\tau)$ and $Z_{\pm}(s)$ can be expressed by 
using $K(t)$: 
Changing the integral variables 
in (\ref{eqn:1.1}),(\ref{eqn:5.1}), we have 
\begin{equation}
\begin{split}
&I(\tau)=\int_{-\infty}^{\infty}
e^{i\tau t}K(t)dt 
=\int_{0}^{\infty}
e^{i\tau t}K(t)dt+
\int_{0}^{\infty}
e^{-i\tau t}K(-t)dt,
\end{split}
\label{eqn:6.2}
\end{equation}
\begin{equation}
Z_{\pm}(s)
=\int_0^{\infty} t^s K(\pm t)dt,
\label{eqn:6.3}
\end{equation}
respectively. 
Applying the inverse formula of the Mellin transform
to (\ref{eqn:6.3}),  
we have 
\begin{equation}
K(\pm t)
=\frac{1}{2\pi i}
\int_{c-i\infty}^{c+i\infty} 
Z_{\pm}(s)t^{-s-1}ds,
\label{eqn:6.4}
\end{equation}
where $c>0$ and the integral contour follows
the line Re$(s)=c$ upwards. 
Recall that 
$Z_{+}(s)$ and $Z_{-}(s)$ are meromorphic functions and 
their poles exist on the negative part of the real axis. 
By deforming the integral contour as $c$ tends 
to $-\infty$ in (\ref{eqn:6.4}), 
the residue formula gives the
asymptotic expansions of $K(t)$ 
as $t\to\pm 0$.
Substituting these expansions of $K(t)$ into 
(\ref{eqn:6.2}), 
we can get an asymptotic expansion of $I(\tau)$ 
as $\tau\to+\infty$.

Through the above calculation, 
we see more precise relationship for the coefficients. 
If $Z_{+}(s)$ and $Z_{-}(s)$ 
have the Laurent expansions at $s=-\lambda$:
\begin{equation*}
Z_{\pm}(s)=\frac{B_{\pm}}{(s+\lambda)^{\rho}}+
O\left(\frac{1}{(s+\lambda)^{\rho-1}}\right),
\end{equation*}
respectively, 
then the corresponding part in the asymptotic
expansion of $I(\tau)$ has the form
$$
B\tau^{-\lambda}(\log \tau)^{\rho-1}+
O(\tau^{-\lambda}(\log \tau)^{\rho-2}).
$$ 
Here a simple computation gives 
the following relationship:
\begin{equation}
B=\frac{\Gamma(\lambda)}{(\rho-1)!}
\left[
e^{i\pi \lambda/2}B_+ +e^{-i\pi \lambda/2}B_-
\right],
\label{eqn:6.5}
\end{equation}
where $\Gamma$ is the Gamma function. 
\subsection{Proofs of Theorems 2.2, 2.7 and 2.12}
Applying the above argument to the results 
relating to $Z_{\pm}(s)$ in Section~5, 
we obtain the theorems in Section~2. 

{\it Proof of Theorem~2.2.}\quad
This theorem follows from Theorem~5.1 
with Remark~5.9 and Theorem~5.10. 
Notice that Propositions~5.8 and 5.14 and 
the relationship (\ref{eqn:6.5}) 
induce the cancellation of the coefficients 
of the term, 
whose orders are larger than $-1/d(f,\varphi)$. 
The estimate in Remark~2.3 is 
also obtained by using the estimates of 
orders of the poles of $Z_{\pm}(s),Z(s)$ 
in Proposition~5.4 and Theorem~5.10. 

{\it Proof of Theorem~2.7.}\quad 
This theorem follows from Theorem~5.7
with Remark~5.9 and Theorems~5.13 and 5.15. 
Notice that the relationship (\ref{eqn:6.5}) 
gives the information about the
coefficient of the leading term of $I(\tau)$. 

{\it Proof of Theorem~2.12.}\quad 
This theorem follows from Theorem~5.16. 

\section{Examples}
In this section, we give some examples of 
the phase and the amplitude in the integral 
(\ref{eqn:1.1}), 
which clarifies the subtlety of 
our results in Sections~2 and 6.
Throughout this section, we always assume that 
$f$, $\varphi$, $\chi$ 
satisfy the conditions (A), (B), (C)
in Section~2. 
(In Examples~1,~2,  each $f$, $\varphi$, $\varphi_t$ 
satisfies the respective condition.)

\subsection{The one-dimensional case}
Let us compute the asymptotic expansion 
of $I(\tau)$ in (\ref{eqn:1.1}) as $\tau\to+\infty$ 
in the one-dimensional case by using our analysis in this paper. 
As mentioned in Section~2, 
the results below 
can also be obtained by using the analysis in \cite{ste93}.
Note that the computation below is valid for 
$C^{\infty}$ phases. 
From the assumptions
$\Gamma_+(f),\Gamma_+(\varphi)\neq \emptyset$, 
$f$, $\varphi$ can be expressed as 
\begin{equation*}
f(x)=x^{q}\tilde{f}(x), \quad
\varphi(x)=x^p\tilde{\varphi}(x),
\end{equation*}
where $q,p\in\Z_+$, $q\geq 2$ and 
$\tilde{f},\tilde{\varphi}$ are 
$C^{\infty}$ functions defined on a 
neighborhood of the origin with 
$\tilde{f}(0)\tilde{\varphi}(0)\neq 0$. 
Suppose that the support of $\chi$ is so small
that $\tilde{f},\tilde{\varphi}$ do not have
any zero on the support.

It is easy to see that 
$f$ is nondegenerate over $\R$ with respect to 
its Newton polyhedron, 
$\Gamma_+(f)=[q,\infty)$, 
$\Gamma_+(\varphi)=[p,\infty)$, 
$d(f,\varphi)=\frac{q}{p+1}$ and $m(f,\varphi)=1$. 
Let $\alpha$ be the sign 
of $\tilde{f}(x)$ on the support of $\chi$.
From a simple computation, for even $q$   
\begin{equation}
\begin{split}
&Z_{\alpha}(s)=\int_0^{\infty} x^{qs+p}
\{|\tilde{f}(x)|^s\tilde{\varphi}(x)\chi(x)+
(-1)^p|\tilde{f}(-x)|^s\tilde{\varphi}(-x)\chi(-x)\}dx, 
\\
&
Z_{-\alpha}(s)=0,
\end{split}
\end{equation}
and for odd $q$
\begin{equation}
\begin{split}
&Z_{\alpha}(s)=
\int_0^{\infty} x^{qs+p}
|\tilde{f}(x)|^s\tilde{\varphi}(x)\chi(x)dx,\\
&Z_{-\alpha}(s)=
(-1)^p
\int_0^{\infty}x^{qs+p}
|\tilde{f}(-x)|^s\tilde{\varphi}(-x)\chi(-x)dx.
\end{split}
\end{equation}
By using Lemma~5.2, we can see that 
the poles of $Z_{\pm}(s)$ are simple and they 
are contained in the set 
$\{-\frac{p+1+\nu}{q};\nu\in\Z_+\}$.
By using Lemma~5.6, 
we can compute the explicit values 
of the coefficients of the term $(s+\frac{p+1}{q})^{-1}$ 
in the Laurent expansions of $Z_{+}(s)$ and $Z_-(s)$. 

Next, applying the argument in Section~6.1, we have  
$$
I(\tau)
\sim\tau^{-\frac{p+1}{q}}
\sum_{j=0}^{\infty} C_j \tau^{-j/q}
 \quad \mbox{as $\tau\to\infty$.}
$$ 
The relationship (\ref{eqn:6.5}) gives the values
of the coefficient $C_0$. 
As a result, we can see all the cases that 
$\beta(f,\varphi)=-1/d(f,\varphi)$ holds. 
\begin{enumerate}
\item ($q$:even; $p$:even)  
$C_0=\frac{2}{q}\Gamma\left(\frac{p+1}{q}\right)
|\tilde{f}(0)|^{-\frac{p+1}{q}}\tilde{\varphi}(0)
e^{\alpha i\frac{p+1}{2q}\pi}\neq 0$, 
which implies $\beta(f,\varphi)=-1/d(f,\varphi)$;
\item ($q$:even; $p$:odd) 
$C_0=0$, which implies 
$\beta(f,\varphi)<-1/d(f,\varphi)$;
\item ($q$:odd; $p$:even) 
$C_0=
\frac{2}{q}\Gamma\left(\frac{p+1}{q}\right)
|\tilde{f}(0)|^{-\frac{p+1}{q}}\tilde{\varphi}(0)
\cos\left(\frac{p+1}{2q}\pi\right)$, 
which implies that 
$\beta(f,\varphi)=-1/d(f,\varphi)$ 
is equivalent to 
$\frac{p+1}{2q}\not\in \N +\frac{1}{2}$; 
\item ($q$:odd; $p$:odd) 
$C_0=
\alpha\frac{2i}{q}\Gamma\left(\frac{p+1}{q}\right)
|\tilde{f}(0)|^{-\frac{p+1}{q}}\tilde{\varphi}(0)
\sin\left(\frac{p+1}{2q}\pi\right)$, 
which implies that 
$\beta(f,\varphi)=-1/d(f,\varphi)$ is equivalent to 
$\frac{p+1}{2q}\not\in \N$. 
\end{enumerate}
Let us compare the conditions (a),(b),(c),(d) 
in Theorem~2.7 
with the condition of $p,q$. 
That $q$ (resp. $p$) is even  
is equivalent to the condition (b) (resp. (c), (d)).  
The condition (a) is equivalent to the inequality: 
$\frac{p+1}{2q}\pi<\frac{\pi}{2}$, 
which implies $C_0\neq 0$ in (iii).

\subsection{Example 1}
Consider the following two-dimensional example: 
\begin{equation*}
\begin{split}
&f(x_1,x_2)=x_1^4,
\\
&\varphi(x_1,x_2)=x_1^2x_2^2+e^{-1/x_2^2}
(=:\varphi_1(x_1,x_2)+\varphi_2(x_1,x_2)),
\end{split}
\end{equation*}
and $\chi$ is radially symmetric about the origin. 
It is easy to see that
$f$ is nondegenerate over $\R$ with respect to 
its Newton polyhedron,
$\Gamma_+(f)=\{(4,0)\}+\R_+^2$,
$\Gamma_+(\varphi)=\Gamma_+(\varphi_1)
=\{(2,2)\}+\R_+^2$, $\Gamma_+(\varphi_2)=\emptyset$, 
$d(f,\varphi)=4/3$, $m(f,\varphi)=1$.
Define 
$$
Z_{\pm}^{(j)}(s)=\int_{\R^2} 
(f(x))_{\pm}^s\varphi_j(x)\chi(x)dx \quad\,\,
j=1,2.
$$ 
Note $Z_-(s)=0$. 
A simple computation gives 
$$
Z_+^{(1)}(s)=
4\int_0^{\infty}\int_0^{\infty} 
x_1^{4s+2}x_2^2 \chi(x_1,x_2)dx_1dx_2.
$$
By Lemma~5.2, we see that the poles of $Z_+^{(1)}(s)$
are simple and they 
are contained in the set 
$\{-3/4,-4/4,-5/4,\ldots\}$. 
Similarly, the poles of 
$$
Z_+^{(2)}(s)=
4\int_0^{\infty}\int_0^{\infty} 
x_1^{4s} e^{-1/x_2^2}\chi(x_1,x_2)dx_1dx_2
$$
are simple and contained in the set 
$\{-1/4,-2/4,-3/4,\ldots\}$.
Moreover, Lemma~5.6 implies that 
the coefficient of $(s+1/4)^{-1}$ is 
$$
\int_0^{\infty} e^{-1/x_2^2}\chi(0,x_2)dx_2>0.
$$
Therefore, we have 
$\beta_+(f,\varphi)=\beta(f,\varphi)=-1/4$. 
As a result, 
$\beta(f,\varphi)> -1/d(f,\varphi) (=-3/4)$. 

This example does not satisfy the 
condition (d) in Theorem~2.2. 
Noticing that 
$\Gamma_+(\varphi)=\{(2,2)\}+\R_+^2$, 
we see that the information of the Newton polyhedron 
is not sufficient to understand the behavior of 
oscillatory integrals 
in the case of $C^{\infty}$ amplitudes. 

\subsection{Example 2}
Consider the following two-dimensional example with
a real parameter $t$:
\begin{equation*}
\begin{split}
&f(x_1,x_2)=x_1^5+x_1^6+x_2^5,
\\
&\varphi_t(x_1,x_2)=
x_1^2+tx_1 x_2+x_2^2.
\end{split}
\end{equation*}
It is easy to see that
$f$ is nondegenerate over $\R$ with respect to 
its Newton polyhedron,
$(\varphi_t)_{\Gamma_0}(x)=\varphi_t(x)$, 
$d(f,\varphi_t)=5/4$, 
and $m(f,\varphi_t)=1$. 
$(\varphi_t)_{\Gamma_0}(x)$ 
is nonnegative on $\R^2$, 
if and only if $|t|\leq 2$. 
Thus, 
Theorem~5.7 implies that 
$\beta(f,\varphi_t)=-1/d(f,\varphi_t)=-4/5$ 
if $|t|\leq 2$. 
In this example, we understand the situation 
in more detail from the explicit 
computation below. 

By applying the computation in Section 5, 
we see the properties of poles of the functions 
$Z_+(s)$ and $Z_-(s)$ in the following. 
The poles of the functions $Z_+(s)$ and $Z_-(s)$ 
are contained in the set 
$\{-4/5,-5/5,-6/5,\ldots\}$ and 
their order is at most one. 
Let 
$C_+(t)$, $C_-(t)$ be the coefficients of  
$(s-4/5)^{-1}$ in the Laurent expansions of 
$Z_+(s)$ and $Z_-(s)$. 
Then, we have
$
C_+(t)=C_-(t)=A+tB
$
with 
$$
A:=\frac{1}{5}\int_{-\infty}^{\infty} 
|u^5+1|^{-4/5}(u^2+1)du, \quad 
B:=\frac{1}{5}\int_{-\infty}^{\infty} 
|u^5+1|^{-4/5}udu.
$$
Note that $A$ is positive and $B$ is negative.

Next, applying the argument in Section~6.1, 
$I(\tau)$ has the asymptotic expansion of the form:
\begin{equation}
I(\tau)\sim \tau^{-\frac{4}{5}}
\sum_{j=0}^{\infty}C_j(t) \tau^{-j/5} \quad 
\mbox{as $\tau\to+\infty$.}
\label{eqn:7.3}
\end{equation}
The relationship (\ref{eqn:6.5}) gives
$C_0(t)=
2\Gamma(\frac{4}{5})\cos(\frac{2}{5}\pi)
(A+tB).$

Set $t_0=-A/B(>0)$.    
From the above value of $C_0(t)$, if $t\neq t_0$, then 
the equation $\beta(f,\varphi_t)=-1/d(f,\varphi_t)$ 
holds.
This means that the condition (d) in 
Theorem~2.7 is not necessary 
to satisfy the above equation.  
Furthermore, this example shows that 
the oscillation index is 
determined by not only the geometry of 
the Newton polyhedra 
but also the values of the coefficients of 
$x^{\alpha}$ for $\alpha\in\Gamma_0$ 
in the Taylor expansion of the amplitude. 
\begin{note}
The existence of the term $x_1^6$ in $f$ 
produces 
infinitely many non-zero coefficients 
$C_j(t)$ in the asymptotic expansion (\ref{eqn:7.3})
for any $t$. 
\end{note}
\subsection{Comments on results in \cite{agv88}}
As mentioned in the Introduction, 
there have been studies in \cite{agv88} 
in a similar direction to our investigations.  
In our language, their results can be stated as follows. 

\begin{theorem?}[Theorem 8.4 in \cite{agv88}, p 254]
If 
$f$ is nondegenerate over $\R$ 
with respect to its Newton polyhedron,
then 
\begin{enumerate}
\item $\beta(f,\varphi)\leq -1/d(f,\varphi)$;
\item If $d(f,\varphi)>1$ and 
$\Gamma_+(\varphi)=\{p\}+\R_+^n$ with $p\in\Z_+^n$, 
then $\beta(f,\varphi)= -1/d(f,\varphi).$
\end{enumerate}
\end{theorem?}

Unfortunately, more additional assumptions are 
necessary to obtain the above assertions (i), (ii). 
Indeed, it is easy to see that 
Example 1 violates (i), (ii). 
As for (ii), even if $\varphi$ is real analytic, 
the one-dimensional case in Section~7.1 indicates 
that at least some condition on the power $p$ 
is needed. 
(It is easy 
to find counterexamples in higher dimensional case.) 
The same case shows that 
the evenness of $p$ is not always necessary 
to satisfy $\beta(f,\varphi)= -1/d(f,\varphi)$. 


\vspace{1 em}

{\sc Acknowledgements.}\quad 
The authors would like to express their sincere gratitude 
to the referee for his/her careful reading of the manuscript
and giving the authors many valuable comments. 


\end{document}